\documentclass[english,12pt]{scrartcl}
\usepackage[T1]{fontenc}
\usepackage[latin9]{inputenc}
\usepackage{a4wide}
\usepackage{babel}
\usepackage{prettyref}
\usepackage{mathtools}
\usepackage{amsmath}
\usepackage{amsthm}
\usepackage{amssymb}
\usepackage[unicode=true,pdfusetitle,
 bookmarks=true,bookmarksnumbered=false,bookmarksopen=false,
 breaklinks=false,pdfborder={0 0 1},backref=false,colorlinks=false]
 {hyperref}
\usepackage{colortbl}

\makeatletter
\theoremstyle{plain}
\newtheorem{thm}{\protect\theoremname}[section]
\theoremstyle{definition}

\theoremstyle{plain}
\newtheorem{lem}[thm]{\protect\lemmaname}
\theoremstyle{remark}
\newtheorem{rem}[thm]{\protect\remarkname}
\theoremstyle{plain}
\newtheorem{prop}[thm]{\protect\propositionname}
\theoremstyle{plain}
\newtheorem{cor}[thm]{\protect\corollaryname}
\theoremstyle{definition}
\newtheorem{example}[thm]{\protect\examplename}
\theoremstyle{definition}

\@ifundefined{date}{}{\date{}}
\usepackage{prettyref}
\usepackage{dsfont}
\usepackage{enumerate}
\allowdisplaybreaks

\renewcommand{\ae}{{\textnormal{a.e.}}}
\newcommand{\lin}{\operatorname{lin}}
\newcommand{\ev}{\operatorname{ev}}
\newcommand{\id}{\operatorname{id}}
\newcommand*{\e}{\mathrm{e}}

\newcommand{\R}{\mathbb{R}}
\newcommand{\1}{\mathds{1}}
\newcommand{\N}{\mathbb{N}}
\newcommand{\K}{\mathbb{K}}
\newcommand{\dom}{\operatorname{dom}}

\renewcommand{\d}{\,\mathrm{d}}
\renewcommand{\Re}{\operatorname{Re}}

\renewcommand{\tilde}{\widetilde}

\newrefformat{subsec}{Subsection \ref{#1}}
\newrefformat{prob}{Problem \ref{#1}}
\newrefformat{prop}{Proposition \ref{#1}}
\newrefformat{lem}{Lemma \ref{#1}}
\newrefformat{thm}{Theorem \ref{#1}}
\newrefformat{cor}{Corollary \ref{#1}}
\newrefformat{rem}{Remark \ref{#1}}
\newrefformat{exa}{Example \ref{#1}}
\newrefformat{sub}{Subsection \ref{#1}}
\newrefformat{eq}{(\ref{#1})}

\theoremstyle{definition}

\newrefformat{hyp}{Hypotheses \ref{#1}}

\makeatother

\providecommand{\corollaryname}{Corollary}
\providecommand{\definitionname}{Definition}
\providecommand{\examplename}{Example}
\providecommand{\lemmaname}{Lemma}
\providecommand{\problemname}{Problem}
\providecommand{\propositionname}{Proposition}
\providecommand{\remarkname}{Remark}
\providecommand{\theoremname}{Theorem}

\begin{document}
\title{State-dependent Delay Differential Equations on $H^1$}
\author{Johanna Frohberg\thanks{Institut f\"ur Angewandte Analysis, TU Bergakademie Freiberg, Germany}\ \ and
Marcus Waurick\thanks{Corresponding Author, Institut f\"ur Angewandte Analysis, TU Bergakademie Freiberg, Germany}}
\maketitle
\begin{abstract}\textbf{Abstract} Classically, solution theories for state-dependent delay equations are developed in spaces of continuous or continuously differentiable functions. The former can be technically challenging to apply in as much as suitably Lipschitz continuous extensions of mappings onto the space of continuous functions are required; whereas the latter approach leads to restrictions on the class of initial pre-histories. Here, we establish a solution theory for state-dependent delay equations for arbitrary Lipschitz continuous pre-histories and suitably Lipschitz continuous right-hand sides on the Sobolev space $H^1$. The provided solution theory is independent of previous ones and is based on the contraction mapping principle on exponentially weighted spaces. In particular, initial pre-histories are not required to belong to solution manifolds and the generality of the approach permits the consideration of a large class of functional differential equations even for which the continuity of the right-hand side has constraints on the derivative.\end{abstract}
\textbf{Keywords} State-dependent delay equations, functional differential equation, weak solutions\\
\label{=00005Cnoindent}\textbf{MSC2020 } Primary 34K05 Secondary 34K30
\newpage
\tableofcontents{}

\section*{Acknowledgements} 
We thank Prof Petr H\'ajek and Prof Friedrich Martin Schneider for providing some insights on (Lipschitz) retractions on the space of continuous functions. We thank Prof Schneider also for making us aware of Kirszbraun's Theorem. We thank Prof Hans-Otto Walther for drawing our attention to the work of Prof Junya Nishiguchi. We wish to cordially thank the anonymous referee for their detailed and thorough review of the original manuscript and its first revision helping to significantly improve the manuscript.\section{Introduction}

State-dependent delay equations are delay equations where the retarded argument of the right-hand side is dependent on the solution itself. This fact necessitates a different solution theory compared to other (delay) differential equations.

There are several approaches and notions to address unique existence of state-dependent delay equations in the literature. In order to sketch their main properties, we consider the following particular state-dependent delay equation. Note that in Section \ref{sec:sth}, we will provide a solution theory for Lipschitz continuous solutions for equations of the form \eqref{eq:sdd1}.

Given $h,T>0$ and a Lipschitz continuous pre-history $\phi\colon (-h,0)\to \R^n$, we consider
\begin{equation}\label{eq:sdd1}
   \begin{cases}
   x'(t) = g(t,x(t),x(t+r(x_{t}))),& t \in (0,T);\\
   x_{0} = \phi,
   \end{cases}
\end{equation}
where we used the usual notation $x_{t}(s)\coloneqq x(t+s)$;  $r$, taking values in $[-h,0]$, is a suitable delay functional defined on a set of functions defined on $[-h,0]$ and $g\colon [0,T]\times \R^n\times \R^n\to \R^n$ is continuous and Lipschitz continuous uniformly in $t$; that is, there exists $L\geq 0$ such that for all $t\in [0,T]$ and $u,v,w,z\in \R^n$, we have
\[
 \| g(t,u,w)-g(t,v,z)\|\leq L(\|u-v\|+\|w-z\|).
\]
If $r$ was constant, the equation at hand is a (state-independent) delay differential equation, which can be dealt with in a well-known straightforward manner. Indeed, integrating the equation, a fixed point of the integral equation is a solution of the differential equation. The contraction mapping principle can be employed either using a weighted norm or by a suitable time-stepping method analysing the Picard iterates, see, e.g., \cite{Drakalauch,PTW14}, \cite[Chapter 4]{STW22}.

The theory is somewhat less elementary if one considers non-constant $r$. In this situation, one needs to involve also the derivative of $x$: a counter-example for uniqueness in spaces of continuous functions is already given in \cite{W70}. We shall sketch a technical reason for the involvement of the derivative of the unknown in any fixed point statement. If one wants to use a similar approach as in the $r$-is-constant case, formal integration of \eqref{eq:sdd1} (assuming everything is well-defined) leads to 
\[
   x(t) =\phi(0)+ \int_0^t g(s,x(s),x(s+r(x_{s})))\d s.
\]
Then, a possible map $\Phi$ for any fixed point argument reads
\[
   \Phi \colon x\mapsto \phi(0)+ \int_0^t g(s,x(s),x(s+r(x_{s})))\d s.
\]
In order for $\Phi$ to be contractive on some function space and at the same time using Lipschitz continuity of $g$, one is required to estimate a term of the following form
\begin{multline*}
   \int_0^t \| x(s+r(x_{s})-y(s+r(y_{s})\|\d s \\ \leq \int_0^t\| x(s+r(x_{s})-y(s+r(x_{s})\|\d s+\int_0^t\| y(s+r(x_{s})-y(s+r(y_{s})\|\d s.
\end{multline*}
The first term on the right-hand side can easily be dealt with in several function spaces and associated norms, the second term requires knowledge of the size of the derivative of $y$ \emph{a priori}. Besides, note that in certain cases, the knowledge of the size of the derivative is also necessary for suitable Lipschitz estimates for $r$, see \cite[Section 3]{Walther} or Section \ref{sec:applpos} below.

There are several ways in the literature of addressing well-posedness of state-dependent delay equations. For a general introduction to delay and so-called functional differential equations, we refer to \cite{MC16,D95}. We note in passing that further generalisations of functional differential equations to infinite-dimensional state spaces are structurally more complicated, see, e.g., \cite{HJ10}. In any case, concerning state-dependent delay equations, we refer to the fundamental contributions \cite{HKWW06,Walther} for many examples and a guide to the literature. In order to position our approach to the ones in the literature, we briefly mention four main sources: 
\begin{enumerate}
\item[(i)] the solution manifold approach introduced by \cite{Walther},
\item[(ii)] the retraction approach introduced at least in \cite{MNP94} and used quite recently in \cite{BGR21}, 
\item[(iii)] the mixed norm approach in \cite{HT97}, and
\item[(iv)] the prolongation approach in \cite{N17,N18}.
\end{enumerate}
In (i) and (ii), the equation \eqref{eq:sdd1} is considered as functional differential equation. In \cite{Walther} solutions in the space of continuously differentiable functions are looked out for. As a consequence, the equation itself dictates a necessary condition for the initial value namely that $\phi'(0)=g(0,\phi(0),\phi(r(\phi)))$ (see also \cite{Walther2}). This condition defines a submanifold $X_g$ of $C^1$---the so-called solution manifold. It can be shown that given suitable differentiability assumptions on the right-hand side, well-posedness for the equation in question follows. {Moreover, in this situation, solutions starting in $X_g$ can be differentiated with respect to initial data (\cite{W04}) allowing  for a principle of linearised stability}. Ultimately, the well-posedness theorem is based on the contraction mapping principle in $C^1$ with a sensible analysis of the constants involved. In applications, {even though it is often only based on identifying the particular form of the derivative, to check} the individual conditions can be quite technical, see, e.g., \cite{GW14}. 

The approach in \cite{MNP94} is based on an existence result in \cite[Theorem 2.2.1]{HV93} (which in turn uses Schauder's fixed point theorem) and an a posteriori uniqueness analysis for `almost locally Lipschitz continuous functionals', i.e., functionals that are locally Lipschitz continuous with respect to the uniform norm if additionally the set of functions is constrained to be Lipschitz continuous. Hence, the underlying space for unique existence is the space of continuous functions subject to a Lipschitz condition. Identifying certain continuity conditions for the right-hand sides only on a specific set of continuous functions, the general theory is applied using certain (Lipschitz) retractions; that is, (Lipschitz) continuous mappings defined on the whole of $C$ projecting onto the said specific set of functions. Given that the underlying space is a space of continuous functions such retractions need to be considered specifically for each set in turn. We particularly refer to the convex and compact subset of $C$ constructed in \cite[Section 5]{HR23}, that is not a Lipschitz retract; see also Section \ref{sec:FDECW} for more technicalities arising in the retraction approach for continuous functions. In any case, using Lipschitz continuous functions as potential solutions only, one realises that the condition to lie on the solution manifold need not be met here (let alone that under the general assumptions considered in \cite{MNP94} the mere existence of the manifold is in question). The argument in \cite{BGR21} for existence of solutions for all times requires arguments involving positivity.

The approach in (iii) was developed in order to study differentiability in certain Sobolev type norms. Similar to the rationale developed here---in \cite{HT97}, a contraction mapping principle is applied namely a variation of the so-called Uniform Contraction Principle. The norms studied are mixed norm type spaces looking at the Lipschitz norm of the initial pre-history combined with a $W^{1,p}$-norm for the solution for positive times. Note that the contraction principle can be applied, if the final time horizon, $T$, is chosen small enough, see \cite[Lemma 4.2]{HT97}. 

In (iv), the author addresses well-posedness on a more fundamental level introducing the concept of prolongations. This leads to sufficient and necessary conditions for well-posedness, see \cite{N17}. Also the role of a potential jump in the continuity of the derivative is considered. In any case, however, the pre-history space is a space of continuous or continuously differentiable functions and the contraction mapping theorem is involved using a time-stepping approach for the integrated differential equation and metrics related to the uniform norm (possibly combined with its derivative), see \cite[p 3515]{N17} or \cite[Proof of Lemma 5.11]{N18}.

In this note we develop a solution theory of equations of the type in \eqref{eq:sdd1} {using} spaces of weakly differentiable functions; more precisely $H^1(-h,T;\R^n)$; that is, the space of square integrable functions with square integrable derivative. More so, we will provide a generalisation of \eqref{eq:sdd1} towards (non-autonomous) \emph{functional differential equations} on $H^1(-h,T;\R^n)$ {(i.e., where the right-hand side of \eqref{eq:sdd1} is not only given as \emph{delay differential equation} with some $g$ and some delay functional $r$, but where the right-hand side is assumed to be explicitly time-dependent and generally dependent on $x_{t}$ as $G$ below in Theorem \ref{thm:fdewpint})}. Similar to \cite{BGR21,MNP94}, we have to suitably restrict to Lipschitz continuous functions for the initial data. 

There are no other restrictions however and we obtain global in time solutions for \eqref{eq:sdd1} right away. The theorem containing the solution theory spelled out for functional differential equations reads as follows (in Theorem \ref{thm:wpfde} below we also state and prove a blow-up result). By $\|z\|_\infty$, we denote the essential supremum of a function $z$.

\begin{thm}\label{thm:fdewpint} Let $h,T>0$, $G\colon [0,T]\times H^1(-h,0;\R^n)\to\R^n$ continuous. Assume that for all $\beta>0$ there exists $L\geq 0$ such that for all $\phi,\psi \in H^1(-h,0;\R^n)$ with $\|\phi'\|_\infty,\|\psi'\|_\infty\leq \beta$ and $t\in [0,T]$
\[
   \|G(t,\phi)-G(t,\psi)\|\leq L\|\phi-\psi\|_{H^1(-h,0;\R^n)}.
\]
Then for all Lipschitz continuous $\phi\colon (-h,0)\to \R^n$ there exists $0<T_1\leq T$ and a unique solution $x\in H^1(-h,T_1;\R^n)$ of
\[
    \begin{cases}
      x'(t)=G(t,x_{t}),& (0<t<T_1),\\
      x_{(0)}=\phi.
    \end{cases}
\] It follows that $x$ is Lipschitz continuous.
\end{thm}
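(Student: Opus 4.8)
The plan is to recast the initial value problem as a fixed point equation on an exponentially weighted subspace of $H^1(-h,T_1;\R^n)$ and apply the Banach contraction mapping principle. First I would fix a Lipschitz constant $\ell$ for the pre-history $\phi$, choose $\beta>\ell$, and let $L=L(\beta)$ be the associated Lipschitz constant from the hypothesis on $G$. Define, for $\rho>0$, the weighted norm $\|x\|_{\rho}^2=\int_{-h}^{T_1} \e^{-2\rho t}(|x(t)|^2+|x'(t)|^2)\d t$ on $H^1(-h,T_1;\R^n)$, which is equivalent to the usual $H^1$-norm for fixed $T_1$ but with a $\rho$-dependent constant. Inside this space I would work on the closed (hence complete) subset
\[
   M\coloneqq\bigl\{x\in H^1(-h,T_1;\R^n)\ :\ x_{(0)}=\phi,\ \|x'\|_\infty\leq\beta\bigr\},
\]
and define the map $\Phi$ by $(\Phi x)(t)=\phi(t)$ for $t\in(-h,0)$ and $(\Phi x)(t)=\phi(0)+\int_0^t G(s,x_{(s)})\d s$ for $t\in[0,T_1]$.

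The key steps are then: (1) \emph{Self-mapping.} Show $\Phi$ maps $M$ into $M$ for $T_1$ small enough. The boundary condition is immediate. For the derivative bound, note $(\Phi x)'(t)=G(t,x_{(t)})$ on $(0,T_1)$, so one needs $\sup_{t\in[0,T_1]}\|G(t,x_{(t)})\|\leq\beta$; since $G$ is continuous, $G(0,\phi)$ is finite, and for $x\in M$ the histories $x_{(t)}$ stay in a bounded $H^1$-ball with derivative sup-norm $\leq\beta$, so by the Lipschitz hypothesis and continuity in $t$ one gets $\|G(t,x_{(t)})\|\leq\|G(0,\phi)\|+L\|x_{(t)}-\phi\|_{H^1}+o(1)$, and $\|x_{(t)}-\phi\|_{H^1}\to 0$ as $t\to 0$ uniformly on $M$ because the derivatives are uniformly bounded by $\beta$; hence a small $T_1$ works, and we may also arrange $\beta>\ell$ so $\phi$ itself lies in $M$ (so $M\neq\emptyset$). (2) \emph{Contraction.} For $x,y\in M$, estimate
\[
   \|(\Phi x-\Phi y)(t)\|\leq\int_0^t\|G(s,x_{(s)})-G(s,y_{(s)})\|\d s\leq L\int_0^t\|x_{(s)}-y_{(s)}\|_{H^1}\d s,
\]
and similarly $(\Phi x-\Phi y)'(t)=G(t,x_{(t)})-G(t,y_{(t)})$, so $\|(\Phi x-\Phi y)'(t)\|\leq L\|x_{(t)}-y_{(t)}\|_{H^1}$. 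Writing $\|x_{(s)}-y_{(s)}\|_{H^1}^2=\int_{-h}^{0}(|x(s+u)-y(s+u)|^2+|x'(s+u)-y'(s+u)|^2)\d u$ and using that $x-y$ vanishes on $(-h,0)$, one controls this by the full energy of $x-y$ on $[0,s]$; the weight $\e^{-2\rho t}$ then produces, by the usual Grönwall/Morgenstern-type estimate, a factor $\leq C L/\rho$ in the $\rho$-norm, which is $<1$ for $\rho$ large. The $\int_0^t$ in the $H^1$-component of $\Phi x$ only helps (it gives an even smaller contribution). (3) Conclude unique existence of a fixed point $x\in M$, observe $x'=G(\cdot,x_{(\cdot)})\in L^2$ with $\|x'\|_\infty\leq\beta$, hence $x$ is Lipschitz with constant $\beta$, and that any $H^1$-solution on $(-h,T_1)$ automatically has bounded derivative (being the continuous function $G(t,x_{(t)})$) and so lies in some such $M$, giving uniqueness in the full space.

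The main obstacle is the self-mapping step (1): one has to ensure simultaneously that $T_1$ is chosen small enough that the a priori bound $\|x'\|_\infty\leq\beta$ propagates under $\Phi$, while $\rho$ is chosen large enough for contraction — and these choices must be compatible, i.e.\ $\rho$ may be taken as large as needed \emph{after} $T_1$ and $\beta$ are fixed, since the weighted norms on a fixed interval are all equivalent. A subtlety worth care is that the Lipschitz constant $L=L(\beta)$ depends on the derivative bound $\beta$, which is exactly why one must restrict to $M$ rather than work on all of $H^1$; this is the structural reason the theorem only yields a local $T_1$ and why Lipschitz (rather than merely $H^1$) pre-histories are needed — one needs $\ell<\beta$ so that $\phi\in M$ and the history segments of elements of $M$ again have sup-bounded derivative. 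The continuity of $G$ (as opposed to mere Lipschitz-on-bounded-sets) is used only to get the base point $G(0,\phi)$ under control and to make $t\mapsto G(t,x_{(t)})$ continuous, which in turn gives $\|x'\|_\infty$ rather than just an $L^\infty$ bound a.e.; the final Lipschitz continuity of $x$ is then immediate from $|x(t)-x(s)|\leq\int_s^t\|G(\tau,x_{(\tau)})\|\d\tau\leq\beta|t-s|$.
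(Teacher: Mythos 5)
Your argument is correct in substance and rests on the same engine as the paper's proof of this theorem (Theorem \ref{thm:wpfde}): rewrite the problem as an integral equation, work in the exponentially weighted space $H^1_\rho(-h,T_1;\R^n)$, exploit that the pre-history map $t\mapsto x_{(t)}$ has operator norm $O(\rho^{-1/2})$ and integration from $0$ has norm $O(\rho^{-1})$ to force a contraction, and recover uniqueness in the full space by showing a posteriori that every $H^1$-solution has bounded derivative (via continuity of $t\mapsto x_{(t)}$ into $H^1(-h,0;\R^n)$ and of $G$). Where you genuinely diverge is in how the constraint $\|x'\|_\infty\le\beta$ --- needed so that the $\beta$-dependent Lipschitz constant $L(\beta)$ is usable --- enters the fixed-point argument. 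You restrict $\Phi$ to the invariant set $M$ and must verify the self-mapping property, which forces you to shrink $T_1$ \emph{before} invoking Banach's theorem and requires the (correct, but somewhat delicate) uniform estimate $\sup_{x\in M}\|x_{(t)}-\phi\|_{H^1}\to 0$ as $t\to 0+$. The paper instead composes the right-hand side with the metric projection $\pi_\beta$ onto the closed convex set $V_\beta$ (Proposition \ref{prop:normproj}), so that the modified map is a strict contraction on all of $H^1_{0,\rho}(0,T;\R^n)$ for the \emph{full} horizon $T$ with no self-mapping step at all; the localisation is deferred to the separate observation that $\pi_\beta$ acts as the identity on $(y+\hat\phi)_{(t)}$ for small $t$ (the analogue of Theorem \ref{thm:localsolth}). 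Your route is more classical and avoids the projection theorem; the paper's route buys an unconditional global fixed point of the projected equation, which is what makes its blow-up alternative ($\|x\|_{\textnormal{Lip}}=\infty$ or $T_0=T$) come out cleanly.

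One small omission: for your self-mapping step you must choose $\beta>\max\{\|\phi\|_{\textnormal{Lip}},\|G(0,\phi)\|\}$, not merely $\beta>\|\phi\|_{\textnormal{Lip}}$; otherwise $\sup_{t\le T_1}\|G(t,x_{(t)})\|\le\beta$ fails for every $T_1>0$, since this supremum tends to $\|G(0,\phi)\|$ as $T_1\to 0$. Since $\beta$ is at your disposal this is a one-line repair, and it matches the condition the paper imposes explicitly.
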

Already for \eqref{eq:sdd1}, by means of a counterexample we show that the space $H^1(-h,T;\R^n)$ for the initial data to lie in is too big warranting well-posedness. Our approach here is based on exponentially weighted spaces and is inspired by Morgenstern's proof for the classical Picard--Lindel\"of theorem using an exponentially weighted norm for the space of continuous functions. Exponentially weighted $L_2$-type spaces are natural in the context of partial differential equations (see \cite{Picard2009,STW22}) and can successfully be applied to ordinary (delay) differential equations (see \cite{Drakalauch,PTW14,STW22}). Note that in these references the time-derivative is established as a continuously invertible linear operator; in fact this is also some part of the reason why the present approach works. However, we shall suppress this viewpoint in the following in the interest of readability. In contrast to the approach (iii) (\cite{HT97}) from above, the Lipschitz continuity of the solution is not built in in the underlying space the contraction mapping principle is applied to, but rather a consequence of the regularity properties of the equation. Moreover, the exponential weights permit the consideration of the whole final time horizon right away and no time-stepping is needed. We note in passing that the mixed norm space in \cite{HT97} is no Hilbert space.

Using the Hilbert space $H^1(-h,T;\R^n)$ instead, we are in the position to make use of the projection theorem. This particularly proves useful when we employ retraction techniques to extend (Lipschitz) continuous functions on $H^1(-h,0;\R^n)$ as every closed convex subset  $C\subseteq H^1(-h,T;\R^n)$ is a retract of $H^1(-h,T;\R^n)$; the retraction is given by the metric projection onto $C$, $P_C$, see in particular Proposition \ref{prop:normproj} below. {This helps to show that} Theorem \ref{thm:fdewpint} also contains a solution theory {for right-hand sides initially given on certain subsets only.} Indeed, some particulars of {this} are presented in Proposition \ref{prop:aLc}. In this proposition, we show that the functional differential equations guaranteeing unique local existence as considered in \cite{MNP94,BGR21} provide a particular case of the solution theory spelled out here: Using $P_C$, we can always extend a suitably Lipschitz continuous functional to the whole of $H^1$ and then apply Theorem \ref{thm:fdewpint}. We emphasise that Kirszbraun's theorem, see \cite{K34}, even asserts that given a Lipschitz continuous $f\colon \dom(f)\to H_2$, $H_1,H_2$ Hilbert spaces, defined on \emph{any} subset $\dom(f)\subseteq H_1$ can be Lipschitz continuously extended to the whole of $H_1$; hinting at the flexibility allowed for in the present Hilbert space approach.

Concerning our proof strategy of Theorem \ref{thm:fdewpint}, Proposition \ref{prop:normproj} is in fact applied to $V_\beta\subseteq H^1(-h,T;\R^n)$ the set of weakly differentiable functions with derivative bounded by $\beta$. This will be used to identify the evaluation functional defined on $H^1(-h,0;\R^n)\times [-h,0]$ suitably composed with the delay functional $r$ as a Lipschitz continuous mapping on $V_\beta$ (see Theorem \ref{thm:sddcm}). The existence of solutions up to the final time $T$ for the more particular equations of the type \eqref{eq:sdd1} is then guaranteed with a contradiction argument and a blow-up technique. 

Another technical advantage of $H^1(-h,T;\R^n)$ (or rather $L_2$) is that the pre-history mapping assigning to each function $x$ the pre-histories $s\mapsto x_{s}$ can be realised as a Lipschitz continuous mapping with arbitrarily small norm given the considered norm is suitably (exponentially) weighted, see Theorem \ref{thm:prehnorm} or \cite{Drakalauch}. This observation already proved useful in the context of neutral (delay) differential equations, see \cite[Section 5.3.3]{Drakalauch}. Note that an analogous property for the space of continuous functions is not true, see \cite[Remark 4.15]{PTW14}. Since $H^1$ embeds continuously into the space of continuous functions, the presented approach lies somewhat inbetween the retraction approach in \cite{BGR21} and the $C^1$-technique in \cite{Walther}. Note that, however, similar to the results presented in \cite{BGR21}, the developed assumptions are easily applicable and more often than not boil down to routine arguments.

We remark that the solution theory developed here is independent of previous solution results in the literature and is based on the contraction mapping principle on $H^1$ combined with a retraction technique valid for Hilbert spaces. 
We emphasise that the present approach addresses the non-autonomous case right away and the assumptions are formulated in such a way that the well-known (non-autonomous) Picard--Lindel\"of theorem for ODEs is a simple corollary of the theory developed here. Indeed, classical Picard--Lindel\"of needs to be given an initial value only rather than an initial pre-history. Hence, the restriction to Lipschitz continuous pre-histories imposed in the well-posedness theorem here offers no constriction for the classical case. 

Next, we briefly summarise the outline of the paper. In Section \ref{sec:fap}, we gather some facts about weighted $L_2$- and $H^1$-spaces, particularly concerning the pre-history mapping. Section \ref{sec:keest} is concerned with the key estimate showing that the evaluation mapping (and thus the delay functional) can be suitably established as a Lipschitz continuous mapping. The solution theory for \eqref{eq:sdd1} on $H^1$ is provided in Section \ref{sec:sth}. The sharpness of the result is illustrated by an example similar to the one in \cite{W70}. We also provide a permanence principle that eventually yields a solution theory for when the delay functional $r$ is a priori not globally Lipschitz continuous on $H^1$. In Section \ref{sec:fde} one finds a suitable generalisation to functional differential equations giving more details and a proof for Theorem \ref{thm:fdewpint}. {Moreover, we treat a variant of \eqref{eq:sdd1} with several delays as an application of our abstract finding.} By means of concrete applications in Section \ref{sec:appl} we illustrate our theory with a classical example from \cite{Walther} and an equation from mathematical biology taken from \cite{GW14,BGR21}. In this section, we will particularly show how the assumptions on unique existence of solutions in \cite{BGR21} can be used to obtain a local solution theory also within the present situation. We close with a comments section, Section \ref{sec:FDECW}, and a conclusion in Section \ref{sec:con}.

\section{Functional analytic prerequisites and the pre-history map}\label{sec:fap}

We use standard terminology for the function spaces involved and briefly recall the most important ones here, see, also, e.g.~\cite{STW22}. For this, let $I\subseteq \R$ be a non-empty interval. For $\rho\in \R$,  $L_{2,\rho}(I)$ is the space of equivalence classes of {(real-valued)} square-integrable functions on $I$ with respect to the measure with Lebesgue density $t\mapsto \exp(-2\rho t)$; and $H_{\rho}^{1}(I)$ is {the subspace of $L_{2,\rho}(I)$ precisely containing those elements in $L_{2,\rho}(I)$ that are once weakly differentiable with derivative in $L_{2,\rho}(I)$. For this, let $L_{1,\textnormal{loc}}(I)$ be the space of equivalence classes of locally integrable functions. We call $f\in L_{1,\textnormal{loc}}(I)$  \textbf{weakly differentiable}} if there exists $g\in L_{1,\textnormal{loc}}(I)$ such that for all $\phi\in C_c^\infty(I)$, the space of infinitely often continuously differentiable functions with compact support in $I$, we have
\[
  - \int_I f(t)\phi'(t)\d  t =  \int_I g(t)\phi(t)\d  t,
\]
in this case one can show that $g\in L_{1,\textnormal{loc}}(I)$ is uniquely determined; we write $f' \coloneqq g$ and we set 
\begin{align*}
   H_{\textnormal{loc}}^{1}(I) &\coloneqq \{ f\in L_{2,\textnormal{loc}}(I);f\text{ weakly differentiable with } f'\in L_{2,\textnormal{loc}}(I)\}\text{ and }\\
    H_{\rho}^{1}(I)& \coloneqq \{ f\in H_{\textnormal{loc}}^{1}(I);f, f' \in L_{2,\rho}(I)\}.
\end{align*}
 In case of $\rho=0$, we also use $H^1(I)$ and $L_2(I)$, respectively. For $\rho\in \R$, we define
\[
  \langle x,y\rangle_{0,\rho}\coloneqq \int_{I}x(t)\cdot y(t)\exp(-2\rho t)dt\text{ and  }  \langle x,y\rangle_{1,\rho}\coloneqq \langle x,y\rangle_{0,\rho}+\langle x',y'\rangle_{0,\rho}
\]and, analogously, {we define the norms via $\|x\|_{L_{2,\rho}}\coloneqq \sqrt{\langle x,x\rangle_{0,\rho}}$ and $\|x\|_{H^1_{\rho}}\coloneqq\sqrt{\langle x,x\rangle_{1,\rho}}$}, respectively. {We mention that endowed with this scalar product $H_{\rho}^{1}(I)$ is a Hilbert space.} The corresponding Hilbert space of $X$-valued functions spaces will be denoted by $L_{2,\rho}(I;X)$,  etc.
 {Note that by the following version of the Sobolev embedding theorem, every $f\in H_{\rho}^{1}(I)$ {has a continuous representative}. \newline
\indent \emph{Throughout the manuscript, we shall almost always choose a continuous representative for such $f$ (which then is easily seen to be unique) allowing for pointwise arguments.}
\begin{thm}[Sobolev-embedding theorem]\label{thm:set} Let $I\subseteq \R$ be a non-empty interval. Then every $f\in H^1_{\textnormal{loc}}(I)$ admits a continuous representative {$\tilde{f}$}. If $f\in H^1(a,b)$ for some $-\infty<a<b<\infty$, then
\[
    \|\tilde{f}\|_{C[a,b]}\leq ((b-a)^{1/2}+(b-a)^{-1/2})\|f\|_{H^1(a,b)}.
\]
\end{thm}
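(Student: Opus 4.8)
\textbf{Proof strategy for Theorem \ref{thm:set} (Sobolev embedding).}
The plan is to prove the stronger, quantitative statement on a bounded interval $I=(a,b)$ first, since the local version for general $I$ follows by restricting to bounded subintervals. So fix $-\infty<a<b<\infty$ and let $f\in H^1(a,b)$. I would first establish the existence of a continuous (indeed absolutely continuous) representative: by density of $C^\infty[a,b]$ in $H^1(a,b)$, pick $f_k\to f$ in $H^1$; for $s,t\in[a,b]$ the fundamental theorem of calculus gives $f_k(t)-f_k(s)=\int_s^t f_k'(\tau)\d\tau$, and Cauchy--Schwarz yields $|f_k(t)-f_k(s)|\leq (t-s)^{1/2}\|f_k'\|_{L_2(a,b)}$, so $(f_k)$ is uniformly equicontinuous and uniformly bounded (the bound below), hence by Arzel\`a--Ascoli has a uniformly convergent subsequence whose limit is a continuous representative of $f$; passing to the limit in the integral identity shows $f(t)-f(s)=\int_s^t f'(\tau)\d\tau$ for the continuous representative.

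For the norm estimate, the key trick is averaging. For $t\in[a,b]$ and any $s\in[a,b]$ we have (for the continuous representative) $f(t)=f(s)+\int_s^t f'(\tau)\d\tau$. Integrating in $s$ over $(a,b)$ and dividing by $(b-a)$ gives
\[
   f(t) = \frac{1}{b-a}\int_a^b f(s)\d s + \frac{1}{b-a}\int_a^b \int_s^t f'(\tau)\d\tau\,\d s.
\]
The first term is bounded by $(b-a)^{-1/2}\|f\|_{L_2(a,b)}$ via Cauchy--Schwarz, and the inner integral $\left|\int_s^t f'\right|\leq (b-a)^{1/2}\|f'\|_{L_2(a,b)}$, so the second term is bounded by $(b-a)^{1/2}\|f'\|_{L_2(a,b)}$. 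Adding and using $\|f\|_{L_2},\|f'\|_{L_2}\leq\|f\|_{H^1}$ gives
\[
   |f(t)|\leq (b-a)^{-1/2}\|f\|_{L_2(a,b)}+(b-a)^{1/2}\|f'\|_{L_2(a,b)}\leq \bigl((b-a)^{1/2}+(b-a)^{-1/2}\bigr)\|f\|_{H^1(a,b)},
\]
and taking the supremum over $t$ yields the claimed bound.

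Finally, for the general interval $I$ and $f\in H^1_{\textnormal{loc}}(I)$: cover $I$ by an increasing sequence of bounded open intervals $I_n$ with $\overline{I_n}\subseteq I$ and $\bigcup_n I_n=I$; on each $I_n$ the argument above produces a continuous representative, and these agree on overlaps (two continuous representatives of the same $L_2$ function coincide), so they glue to a continuous representative on $I$. I do not expect any serious obstacle here; the only point requiring a little care is the passage from the smooth approximants to the continuous representative and the justification that the pointwise integral identity survives the limit --- this is where the uniform (not merely $L_2$) convergence obtained from equicontinuity is essential. Everything else is a routine application of Cauchy--Schwarz and the averaging identity.
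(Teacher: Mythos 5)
Your proof is correct. Note that the paper itself does not prove this statement but simply cites the literature (\cite[Theorem 4.9]{ISem18} and \cite[Theorem 4.12, Case A]{AF03}), so there is no in-paper argument to compare against; what you supply is the standard self-contained one-dimensional argument, and it recovers exactly the constant $(b-a)^{1/2}+(b-a)^{-1/2}$ asserted in the statement. Two small remarks. First, the Arzel\`a--Ascoli step is more machinery than you need: the bound $\|f_k-f_j\|_{C[a,b]}\leq ((b-a)^{1/2}+(b-a)^{-1/2})\|f_k-f_j\|_{H^1}$ applied to the smooth approximants shows $(f_k)$ is uniformly Cauchy, which gives the continuous representative directly and simultaneously passes the norm estimate to the limit. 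Second, when you invoke ``the bound below'' to get uniform boundedness of the $f_k$, make explicit that you are applying the averaging estimate to the smooth functions $f_k$ (where everything is classical) before any limit is taken, so that there is no circularity; as written this is implicit but sound. The gluing argument for general $I$ via an exhaustion by bounded subintervals, using that two continuous representatives of the same $L_2$-class coincide, is fine.
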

\begin{proof}
The proof can be found in \cite[Theorem 4.9]{ISem18}; see also the standard reference \cite[Theorem 4.12, Case A]{AF03}.
\end{proof}
}
Throughout the remainder of this section, we let $h>0$ be our fixed pre-history horizon and $T \in (0,\infty]$ be the final time considered in this section. {In the next couple of lines, we want to define the pre-history map in the present $L_2$-type setting. For this recall that the mapping 
\[
C[-h,T] \ni f \mapsto( t\mapsto  f_t \in C[-h,0])\in C([0,T];C[-h,0])
\]
is well-defined.  This mapping can be extended to the $L_2$-setting in the following sense. For a function $f$ defined on an interval $I$, we denote by $[f]_{\ae}$ the equivalence class of Lebesgue measurable functions almost everywhere equal to $f$ on $I$. Recall that continuous functions with compact support in $I$, $C_c(I)$, are densely embedded into $L_2(I)$ in the sense that $C_c(I) \to L_2(I), f\mapsto [f]_\ae$ is one-to-one and has dense range. As a matter of jargon using a slight abuse of notation, for this dense embedding result, one usually just writes `$C_c(I)\subseteq L_2(I)$ densely'.
\begin{thm}\label{thm:prehnorm}
Let $\rho>0$. Then the mapping
\[
\tilde{\Theta}\colon C_c(-h,T)\subseteq L_{2,\rho}(-h,T)\to L_{2,\rho}(0,T,L_{2}(-h,0))
\]given by $\tilde{\Theta}f \coloneqq [(0,T)\ni t\mapsto  [f_t]_{\ae} ]_{\ae}$ is continuous and, thus, has a unique continuous extension denoted by $\Theta$ satisfying the norm estimate
\[
\|  \Theta\|\leq \frac{1}{\sqrt{2\rho}}.
\]
\end{thm}}
\begin{proof}
{ Let $f\in C_c(-h,T)$. Then we compute, using that the value of the Lebesgue integral does not depend on the individual Lebesgue measurable representative,}
 \begin{align*}
     &\|\tilde{\Theta} f\|^2_{L_{2,\rho}(0,T;L_{2}(-h,0))}\\& = \int_0^T \int_{-h}^0 |f(t+s)|^2 \d s \exp(-2\rho t)\d t 
      = \int_{-h}^0 \int_{0}^T |f(t+s)|^2  \exp(-2\rho t)\d t \d s 
  \\        & = \int_{-h}^0 \int_{0}^T |f(t+s)|^2  \exp(-2\rho( t+s))\d t \exp(2\rho s)\d s  \leq \int_{-h}^0  \exp(2\rho s)\d s \|[f]_\ae\|_{L_{2,\rho}(-h,T)}^2 \\
  & \leq \frac{1}{2\rho} \|[f]_\ae\|_{L_{2,\rho}(-h,T)}^2.
 \end{align*}
{ This estimate confirms continuity of $\tilde{\Theta}$. That densely defined continuous linear operators defined on a Banach space taking values in a possibly different Banach space can be uniquely extended to the whole of the first Banach space preserving its operator norm is a standard result, see, e.g., \cite[Corollary 2.1.5]{STW22}.}
\end{proof} 
We call the thus defined mapping $\Theta$ the \textbf{pre-history map}.
\begin{rem}\label{rem:thetaH}
 Note that $\Theta$ is linear and, thus, commutes with the weak derivative. %
 {Indeed, let $f\in H_{\rho}^1(-h,T)$ and choose its continuous representative. Then, for $\phi\in C_c^\infty(-h,0)$, which is extended by zero on the whole of $\R$, we compute for $t\in (0,T)$ using that {$(s\mapsto \phi(s-t))$ is supported in $(-h,T)$} and that $f$ is weakly differentiable  
 \begin{align*}
 -  \int_{-h}^0 ((\Theta f)(t))(s)\phi'(s)\d  s & = -  \int_{-h}^0 f(t+s)\phi'(s)\d  s\\
&  = -  \int_{-h+t}^t f(s)\phi'(s-t)\d  s =-  \int_{-h}^T f(s)\phi'(s-t)\d  s \\
& =  \int_{-h}^T f'(s)\phi(s-t)\d  s = \int_{-h+t}^t f'(s)\phi(s-t)\d  s  \\  & = \int_{-h}^0 ((\Theta f')(t))(s)\phi(s)\d  s,
 \end{align*}
 confirming $(\Theta f)(t)' = (\Theta f')(t)$, where all derivatives are weak derivatives.} 
  As a consequence, we have 
  { for $f\in H_{\rho}^1(-h,T)$}
  \begin{align*}
    \|\Theta f \|_{L_{2,\rho}(0,T,H^1(-h,0))}^2 &=    \int_{0}^T \|\Theta f(t) \|_{H^1(-h,0)}^2\exp(-2\rho t)\d t \\
    &=    \int_{0}^T \big(\|\Theta f(t) \|_{L_2(-h,0)}^2+\|(\Theta f(t))' \|_{L_2(-h,0)}^2\big)\exp(-2\rho t)\d t \\
      & =  \|\Theta f \|_{L_{2,\rho}(0,T,L_2(-h,0))}^2+\|t\mapsto ((\Theta f)(t)') \|_{L_{2,\rho}(0,T,L_2(-h,0))}^2 \\ &= \|\Theta f \|_{L_{2,\rho}(0,T,L_2(-h,0))}^2+\|\Theta (f') \|_{L_{2,\rho}(0,T,L_2(-h,0))}^2 \\
    &\leq  \frac{1}{2\rho}\big(\|f \|_{L_{2,\rho}(-h,T)}^2+\|f' \|_{L_{2,\rho}(-h,T)}^2 \big).
  \end{align*} Hence,   
  for
 \[
     \hat{\Theta}\colon H_{\rho}^1(-h,T)\to L_{2,\rho}(0,T,H^1(-h,0))
 \] given by $f\mapsto \Theta f$ we have
\[
\| \hat \Theta \|\leq \frac{1}{\sqrt{2\rho}}
\]\end{rem}
{For $f\in C[-h,T]$ and $t\in [0,T]$, we get $(s\mapsto f(t+s))\in C[-h,0]$. Thus, similar to the reasoning for the pre-history map, for $\rho\in \R$, $\tilde{\tau}_t \colon C_c[-h,T]\subseteq L_{2,\rho}(-h,T)\to L_2(-h,0)$ given by $\tilde{\tau}_t f \coloneqq [(s\mapsto f(t+s))]_\ae$ is linear and continuous. Thus, it admits a unique continuous extension, which we denote by $\tau_t$.}   Then a different representation of $\Theta$ is
\[
    \Theta f = (t\mapsto \tau_t f).
\]
{Indeed, the equality is readily confirmed for $f\in C_c(-h,T)$. Uniqueness of the continuous extension of densely defined bounded linear operators thus implies equality on the whole of $L_{2,\rho}(-h,T)$.}
This perspective helps us to prove the next observation concerning the pre-history map. {We introduce the space of $L_{2}(-h,0)$-valued continuous functions $C_\rho([0,T];L_{2}(-h,0))$ endowed with the norm given by $\|F\|_\rho\coloneqq \sup_{t\in [0,T]}\e^{-\rho t}\|F(t)\|_{L_2(-h,0)}$ for all $F\in C_\rho([0,T];L_{2}(-h,0))$.} 
\begin{prop}\label{prop:pre-h-cont}
 Let $\rho\in \R$. Then, for all $f\in L_{2,\rho}(-h,T)$, $\Theta f$ {has a unique representative in $C([0,T);L_{2}(-h,0)).$}
 If, in addition, $T<\infty$, then {$\Theta f$ has a unique representative in $C([0,T];L_{2}(-h,0))$ and $\Theta$ induces a map
 \[
    L_{2,\rho}(-h,T)\to C_\rho([0,T];L_{2}(-h,0)),
 \]which is continuous with operator norm bounded by $1$}.
\end{prop}
\begin{proof} The proof rests on a standard approximation technique and similar results can be found in (almost) any basic text on $C_0$-semi-group theory. We provide the short arguments here. First of all, since for all $t\in (0,\infty)$, we have $L_{2,\rho}(-h,\infty)\hookrightarrow L_{2,\rho}(-h,t), f\mapsto f|_{(-h,t)}$ continuously {the case $T=\infty$ readily follows from the case $T<\infty$. Hence, we may assume $T<\infty$ in the following.}

Let $f\in C[-h,T]$. Then $\tau_t f \to \tau_s f$ in $L_{2,\rho}(-h,0)$ as $t\to s$ in $[0,T]$ by continuity of $f$ and Lebesgue's dominated convergence theorem. 

Next, it is elementary to see that for each $t\in [0,T]$, we have
\[
   \|\tau_t f\|_{L_{2}(-h,0)}\leq \e^{\rho t}\|f\|_{L_{2,\rho}(-h,T)}.
\]
Thus, {dividing by $\e^{\rho t}$ and computing the supremum on both sides with respect to $t\in [0,T]$, we get
\[
 \|(t\mapsto \tau_t f)\|_{\rho} =  \sup_{t\in [0,T]} \e^{-\rho t}  \|\tau_t f\|_{L_{2}(-h,0)} \leq \|f\|_{L_{2,\rho}(-h,T)}.
\]}
Hence,
\[
    C[-h,T]\subseteq L_{2,\rho}(-h,T)\to C_\rho([0,T];L_{2}(-h,0)), f\mapsto \Theta f
\]
is continuous { as an operator from (a subset of) $L_{2,\rho}(-h,T)$ to $C_\rho([0,T];L_{2}(-h,0))$} with operator norm bounded by $1$. Continuous extension by density of $C[-h,T]$ in $L_{2,\rho}(-h,T)$ shows the remaining assertions.
\end{proof}

\begin{rem}\label{rem:conthatp}
Similar to Theorem \ref{thm:prehnorm}, we can also spell out the corresponding continuity statement for $H^1$ instead of $L_2$. More precisely, assuming $T<\infty$, it follows from Proposition \ref{prop:pre-h-cont} that for $f\in H^1_\rho(-h,T)$, {$\hat{\Theta}f$ has a continuous representative in $C_\rho([0,T];H^1(-h,0))$, which induces a continuous mapping with operator norm bounded by $1$. Indeed, using \cite[Theorem 7.7]{ISem18}, we recall that $C^1[-h,T]\subseteq H^1(-h,T)$ densely (in the sense of the embedding $f\mapsto [f]_\ae$ as before). With Proposition \ref{prop:pre-h-cont} we estimate for $f\in C^1[-h,T]$
\[
   \|t\mapsto [\Theta f(t)]_\ae\|_{C_\rho([0,T];L_{2}(-h,0))}\leq \|[f]_{\ae}\|_{ L_{2,\rho}(-h,T)}
\]
and
\[
   \|t\mapsto [\Theta f'(t)]_\ae\|_{C_\rho([0,T];L_{2}(-h,0))}\leq \|[f']_{\ae}\|_{ L_{2,\rho}(-h,T)}.
\]
By Remark \ref{rem:thetaH}, $\Theta f'(t)=\Theta f(t)'$ for all $t\in [0,T]$. For $C^1$-functions $f$, we have $[f']_\ae=[f]'_\ae$, where the former derivative is classical and the latter is weak. Thus, for $f\in C^1[-h,T]$, we obtain the estimate
\[
    \|t\mapsto [\Theta f(t)]_\ae\|_{C_\rho([0,T];H^1(-h,0))}\leq \|[f]_{\ae}\|_{ H^1_\rho(-h,T)},
\]
which implies that $\hat{\Theta}$ induces a continuous mapping with operator norm less than $1$ from $ H^1_\rho(-h,T)$ to $C_\rho([0,T];H^1(-h,0))$.}
In particular, for $\rho=0$ and for all $f\in H^1(-h,T)$, we have
\begin{equation}\label{eq:chf}
 \sup_{\tau \in [0,T]} \|f_\tau\|_{H^1(-h,0)}\leq \|f\|_{H^1(-h,T)}.
\end{equation}
\end{rem}

\section{A key estimate for the solution theory}\label{sec:keest}
 In order to provide a solution theory for state-dependent delay equations, we need to derive a key estimate. This estimate will render the anticipated functional differential equation as ordinary differential equation with Lipschitz continuous right-hand side. For this, we introduce the \textbf{Lipschitz semi-norm}, $\|g\|_{\textnormal{Lip}}$ for a mapping $g\colon X\to Y$ between metric spaces $(X,d)$ and $(Y,e)$ given by
 \[
    \|g\|_{\textnormal{Lip}} \coloneqq \sup_{x_1,x_2\in X,x_1\neq x_2} e(g(x_1),g(x_2))/d(x_1,x_2) \in [0,\infty].
 \] Evidently, $g$ is Lipschitz continuous if and only if $\|g\|_{\textnormal{Lip}}<\infty$.
 
For $\beta>0$, we introduce 
\[
  V_\beta \coloneqq \{ \psi \in H^1(-h,0;\R^n); \|\psi' \|_\infty \leq \beta\},
\]where $\|\cdot\|_\infty$ denotes the norm in $L_\infty(-h,0;\R^n)$ where $h>0$.

\begin{rem}\label{rem:Vbeta}
$V_\beta\subseteq H^1(-h,0;\R^n)$ is closed and  convex. Indeed, convexity being clear; we briefly address closedness. {Let $(\psi_k)_{k\in \N}$ be a sequence in $V_\beta$ converging to some $\psi\in H^1(-h,0;\R^n)$.} In particular, by the definition of the norm in  $H^1(-h,0;\R^n)$, $(\psi_k')_{k\in \N}$ converges to $\psi'$ in $L_2(-h,0;\R^n)$. Then, by the Fischer--Riesz theorem, we may assume without loss of generality that $(\psi_k')_{k\in \N}$ converges almost everywhere to $\psi'$; hence the $L_\infty$-norm estimate is preserved.
\end{rem}

In order to present the key ingredient warranting well-posedness of state-dependent delay equations, we reformulate said equation as a functional differential equation. For this, similarly to \cite{Walther}, we introduce the evaluation mapping $\ev\colon H^1(-h,0;\R^n) \times [-h,0] \to \R^n$ given by
\[
    \ev(\phi,t)\coloneqq \phi(t),
\]which is well-defined by Theorem \ref{thm:set}.
\begin{thm}\label{thm:sddcm}  Let $h, \beta>0$. Let both $g\colon \R^n\to \R^n$ and $r\colon H^1(-h,0;\R^n) \to [-h,0]$ be Lipschitz continuous.

Then $f\colon V_\beta\to \R^n$ with
\[
 f = g \circ \ev \circ (\id \times r)
\] satisfies
\[
   \|f\|_{\textnormal{Lip}} \leq \|g\|_{\textnormal{Lip}}(2h^{1/2}+\beta \|r\|_{\textnormal{Lip}} + h^{-1/2}).
\]
\end{thm}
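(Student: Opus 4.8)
The plan is to bound the Lipschitz constant of the composition $f = g \circ \ev \circ (\id \times r)$ by chaining the Lipschitz constants of its three factors, the only nontrivial point being to produce a Lipschitz estimate for $\ev$ on the relevant set. First I would fix $\phi, \psi \in V_\beta$ and write
\[
\|f(\phi) - f(\psi)\| = \|g(\ev(\phi, r(\phi))) - g(\ev(\psi, r(\psi)))\| \leq \|g\|_{\textnormal{Lip}} \,\|\phi(r(\phi)) - \psi(r(\psi))\|,
\]
and then split via the triangle inequality into $\|\phi(r(\phi)) - \phi(r(\psi))\| + \|\phi(r(\psi)) - \psi(r(\psi))\|$. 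The second term is controlled by the Sobolev embedding (Theorem \ref{thm:set}) applied on $(-h,0)$: since the interval has length $h$, one gets $\|\phi - \psi\|_{C[-h,0]} \leq (h^{1/2} + h^{-1/2})\|\phi - \psi\|_{H^1}$, so the second term is at most $(h^{1/2} + h^{-1/2})\|\phi - \psi\|_{H^1}$.

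The first term is where the constraint $\phi \in V_\beta$ enters: because $\phi$ is weakly differentiable with $\|\phi'\|_\infty \leq \beta$, it is $\beta$-Lipschitz as a function on $[-h,0]$, hence
\[
\|\phi(r(\phi)) - \phi(r(\psi))\| \leq \beta \,|r(\phi) - r(\psi)| \leq \beta \,\|r\|_{\textnormal{Lip}}\,\|\phi - \psi\|_{H^1}.
\]
Collecting the two pieces gives $\|\phi(r(\phi)) - \psi(r(\psi))\| \leq (\beta\|r\|_{\textnormal{Lip}} + h^{1/2} + h^{-1/2})\|\phi - \psi\|_{H^1}$. The only discrepancy with the claimed bound is the coefficient of $h^{1/2}$: the statement has $2h^{1/2}$ rather than $h^{1/2}$. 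I would account for this by being slightly less sharp in the Sobolev step — e.g. bounding $\|\phi(r(\psi)) - \psi(r(\psi))\| \le \|\phi-\psi\|_{C[-h,0]}$ and using the estimate $\|\phi-\psi\|_{C[-h,0]} \le (2h^{1/2} + h^{-1/2})\|\phi-\psi\|_{H^1}$, which holds a fortiori — or by noting that a cruder but more convenient form of the embedding constant on an interval of length $h$ is $2h^{1/2} + h^{-1/2}$; either way the target inequality follows. Multiplying through by $\|g\|_{\textnormal{Lip}}$ yields $\|f\|_{\textnormal{Lip}} \leq \|g\|_{\textnormal{Lip}}(2h^{1/2} + \beta\|r\|_{\textnormal{Lip}} + h^{-1/2})$.

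The main thing to be careful about — rather than a genuine obstacle — is the passage from "$\psi \in H^1$ with $\|\psi'\|_\infty \le \beta$" to "$\psi$ has a $\beta$-Lipschitz continuous representative": this is the statement that an $H^1$ function with essentially bounded derivative coincides a.e. with its absolutely continuous representative $t \mapsto \psi(-h) + \int_{-h}^t \psi'$, for which $|\psi(t)-\psi(s)| \le \beta|t-s|$ is immediate. One should also make sure $r(\phi), r(\psi) \in [-h,0]$ so that $\ev$ is being evaluated in its domain, which is exactly the hypothesis that $r$ takes values in $[-h,0]$. No other subtleties arise; the estimate is a direct three-term chain.
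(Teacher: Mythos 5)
Your argument is correct, and it in fact proves a slightly sharper bound than the one stated. The overall strategy coincides with the paper's (peel off $\|g\|_{\textnormal{Lip}}$, then split $\phi(r(\phi))-\psi(r(\psi))$ into a ``moving the evaluation point'' term controlled by $\beta\|r\|_{\textnormal{Lip}}$ and a ``changing the function'' term controlled by the Sobolev embedding), but the decomposition differs. The paper routes through the values at $0$, writing $\phi(r(\phi))-\phi(0)$ and $\psi(r(\psi))-\psi(0)$ as integrals of the derivatives, and then bounds $\int_{r(\psi)}^0(\phi'-\psi')$ by Cauchy--Schwarz; this effectively estimates $\|(\phi-\psi)(r(\psi))\|$ by $\|(\phi-\psi)(0)\|+h^{1/2}\|\phi'-\psi'\|_{L_2}\leq (2h^{1/2}+h^{-1/2})\|\phi-\psi\|_{H^1}$, which is exactly where the coefficient $2h^{1/2}$ in the statement originates. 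You instead insert $\phi(r(\psi))$ and bound $\|(\phi-\psi)(r(\psi))\|$ directly by $\|\phi-\psi\|_{C[-h,0]}\leq(h^{1/2}+h^{-1/2})\|\phi-\psi\|_{H^1}$ via Theorem \ref{thm:set}, obtaining $\|f\|_{\textnormal{Lip}}\leq\|g\|_{\textnormal{Lip}}(h^{1/2}+\beta\|r\|_{\textnormal{Lip}}+h^{-1/2})$, from which the stated inequality follows a fortiori; there is no need to artificially degrade the embedding constant. Your handling of the first term is the same in substance as the paper's bound $\|\int_{r(\phi)}^{r(\psi)}\phi'(s)\d s\|\leq\beta|r(\phi)-r(\psi)|$, and your remark that elements of $V_\beta$ have $\beta$-Lipschitz continuous representatives is precisely the fact both proofs rely on. The only cosmetic point worth noting is that Theorem \ref{thm:set} is stated for scalar functions, but the paper itself applies it to $\R^n$-valued functions in the same way, so nothing is lost.
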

\begin{proof} 
We compute for $\phi,\psi \in V_\beta$
  \begin{align*}
    \| f(\phi)-f(\psi)\|_{\R^n} & = \|g \circ \ev \circ (\id \times r)(\phi)-g \circ \ev \circ (\id \times r)(\psi)\|_{\R^n} \\
    & \leq \|g\|_{\textnormal{Lip}} \|\phi(r(\phi))-\psi(r(\psi))\|_{\R^n}. 
    \end{align*} 
    Focussing on the right-hand factor of the right-hand side, we further estimate
    \begin{align*}
    & \|\phi(r(\phi))-\psi(r(\psi))\|_{\R^n} \\
    & \leq\|\phi(0)-\phi(r(\phi)) +\psi(0)-\psi(r(\psi))\|_{\R^n}+\|\psi(0)-\phi(0)\|_{\R^n}\\
    & \leq\Big\|\int_{r(\phi)}^0 \phi'(s)\d s - \int_{r(\psi)}^0 \psi'(s)\d s\Big\|_{\R^n}+  (h^{1/2}+h^{-1/2})\|\psi-\phi\|_{H^1(-h,0;\R^n)},\end{align*}
    using Theorem \ref{thm:set}.
    Assuming without restriction that $r(\phi)\leq r(\psi)$ and using the Cauchy--Schwarz inequality, we treat the integral terms next:
    \begin{align*}
    \Big\|\int_{r(\phi)}^0 \phi'(s)\d s - \int_{r(\psi)}^0 \psi'(s)\d s\Big\|  & \leq \Big\|\int_{r(\psi)}^0 \phi'(s)\d s - \int_{r(\psi)}^0 \psi'(s)\d s\Big\|_{\R^n} + \Big\| \int_{r(\phi)}^{r(\psi)} \phi'(s)\d s\Big\|_{\R^n} \\ 
    & \leq \|1\|_{L_{2}(-h,0;\R^n)} \|\phi'-\psi'\|_{L_{2}(-h,0;\R^n)} + |r(\psi)-r(\phi)|\beta \\ & \leq \sqrt{h} \|\phi'-\psi'\|_{L_{2}(-h,0;\R^n)}+\beta \|r\|_{\textnormal{Lip}} \|\psi-\phi\|_{H^1(-h,0;\R^n)}.
    \end{align*}Putting the above inequalities together, we conclude the proof.
\end{proof}

In applications, the delay functional $r$ assigning the time at which the unknown is to be evaluated on the right-hand side might not be Lipschitz continuous on the whole of $H^1(-h,0;\R^n)$. Moreover, $f$ as given in Theorem \ref{thm:sddcm} is only Lipschitz continuous on $V_\beta$ for all $\beta>0$. However, in all the cases we consider in Section \ref{sec:appl}, $r$ admits a Lipschitz continuous \emph{extension} to $H^1(-h,0;\R^n)$ and so does $f$ from Theorem \ref{thm:sddcm}. The reason for this is the Hilbert space structure of $H^1(-h,0;\R^n)$, which provides the possibility of defining projections onto closed convex subsets. Further to the comments in the introduction, we mention that in general also the metric projection\footnote{{Let $X$ be a Banach space, $C\subseteq X$. $C$ is called \textbf{proximinal}, if for all $x \in X$ there exists a \textbf{best-approximation of $x$ in $C$}, that is, an element $y\in C$ such that $\|x-y\|_X = \inf_{z\in C}\|x-z\|_X$. If $C$ is proximinal, the relation 
\[
   P_C \coloneqq \{(x,y)\in X\times X; y \text{ best-approximation of $x$ in $C$} \}
\] is called \textbf{metric projection}. If $X$ is a Hilbert space and $C$ closed and convex, $P_C$ is a mapping.}} in general Banach spaces is not as well-behaved as in the Hilbert space case. Indeed, any closed convex set admitting a best-approximation as in Proposition \ref{prop:normproj} requires the space considered to be reflexive and strictly convex excluding the space of continuous functions or continuously differentiable functions, see Theorem of Day--James in \cite[Chapter 5, p 436]{M98}. Moreover, if one was to extend $r$ assumed to be a $C^1$-function defined on a subset of $H^1(-h,0;\R^n)$ to a $C^1$-function on the whole space, one is confronted with a challenging matter on its own, see, e.g., \cite{AFK10} and the references therein.
For extending $r$ to a Lip\-schitz map on $H^1(-h,0;\R^n)$, we briefly recall the following abstract result for (minimal) projections in Hilbert spaces. 
\begin{prop}[{{see, e.g., \cite[p 142]{BK15}}}] \label{prop:normproj} Let $H$ be a Hilbert space, $C\subseteq H$ closed and convex. Then for all $x\in H$ there exists a unique $P_Cx\in C$ such that
\[
    \|x-P_Cx\|=\inf_{y\in C}\|x-y\|.
\]
Moreover, $P_C$ is a Lipschitz continuous projection with Lipschitz constant bounded by $1$.
\end{prop}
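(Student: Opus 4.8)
\textbf{Proof proposal for Proposition \ref{prop:normproj}.}

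The plan is to prove the three assertions in order: first existence of a minimiser, then uniqueness, and finally the Lipschitz (nonexpansiveness) estimate, from which the projection property $P_C^2=P_C$ follows at once since $P_Cx=x$ whenever $x\in C$. For \emph{existence}, set $d\coloneqq\inf_{y\in C}\|x-y\|$ and pick a minimising sequence $(y_k)_k$ in $C$ with $\|x-y_k\|\to d$. The key step is to show $(y_k)_k$ is Cauchy: applying the parallelogram identity to the vectors $x-y_k$ and $x-y_j$ gives
\[
  \|y_k-y_j\|^2 = 2\|x-y_k\|^2 + 2\|x-y_j\|^2 - 4\Big\|x-\tfrac{y_k+y_j}{2}\Big\|^2,
\]
and since $C$ is convex, $\tfrac{y_k+y_j}{2}\in C$, so the last term is at least $4d^2$; letting $k,j\to\infty$ the right-hand side tends to $2d^2+2d^2-4d^2=0$. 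Completeness of $H$ yields a limit $P_Cx$, and closedness of $C$ puts it in $C$; continuity of the norm gives $\|x-P_Cx\|=d$. For \emph{uniqueness}, if $y_1,y_2\in C$ both attain $d$, the same parallelogram computation with $y_k,y_j$ replaced by $y_1,y_2$ forces $\|y_1-y_2\|^2\le 2d^2+2d^2-4d^2=0$.

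For the \emph{Lipschitz estimate}, the standard route is the variational inequality characterising the projection: for $x\in H$ and $z\in C$, $z=P_Cx$ if and only if $\Re\langle x-z,y-z\rangle\le 0$ for all $y\in C$. One direction I would get by a one-parameter convexity argument: for $y\in C$ and $t\in(0,1]$, $z+t(y-z)\in C$, so $\|x-z\|^2\le\|x-z-t(y-z)\|^2 = \|x-z\|^2 - 2t\Re\langle x-z,y-z\rangle + t^2\|y-z\|^2$; dividing by $t$ and sending $t\downarrow 0$ gives the inequality. (The converse is immediate by expanding $\|x-y\|^2=\|x-z\|^2+\|z-y\|^2-2\Re\langle x-z,y-z\rangle\ge\|x-z\|^2$.) Now given $x_1,x_2\in H$, write the variational inequality for $z_i=P_Cx_i$ testing with $y=z_{3-i}\in C$:
\[
  \Re\langle x_1-z_1, z_2-z_1\rangle\le 0,\qquad \Re\langle x_2-z_2, z_1-z_2\rangle\le 0.
\]
Adding these and rearranging yields $\|z_1-z_2\|^2\le\Re\langle x_1-x_2, z_1-z_2\rangle\le\|x_1-x_2\|\,\|z_1-z_2\|$ by Cauchy--Schwarz, hence $\|P_Cx_1-P_Cx_2\|\le\|x_1-x_2\|$. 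Finally, if $x\in C$ then $x$ itself attains the infimum $0$, so by uniqueness $P_Cx=x$, which shows $P_C$ maps onto $C$ and is idempotent, i.e.\ a projection.

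I do not expect any genuine obstacle here; this is the classical Hilbert-space projection theorem and every step is routine. The only point requiring a little care is the real-part bookkeeping if $\K=\mathbb C$ (working with $\Re\langle\cdot,\cdot\rangle$ throughout, which is the relevant inner product of the underlying real Hilbert space), and making sure the minimising-sequence argument invokes convexity of $C$ for the midpoint and completeness of $H$ for the limit, and closedness of $C$ for membership — the three hypotheses are each used exactly once. Alternatively one could simply cite \cite{BK15} as the statement already does, but spelling out the parallelogram-identity proof keeps the note self-contained.
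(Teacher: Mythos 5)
Your proof is correct and complete; the paper itself offers no proof of Proposition \ref{prop:normproj}, merely citing \cite[p 142]{BK15}, and your argument (minimising sequence plus parallelogram identity for existence and uniqueness, the variational inequality $\Re\langle x-P_Cx,\,y-P_Cx\rangle\le 0$ for nonexpansiveness, and idempotence from $P_Cx=x$ on $C$) is precisely the classical one found in that reference. Nothing to add beyond noting that, as in the paper's Corollary \ref{cor:Lipf}, one should read $C\neq\emptyset$ into the hypothesis for the infimum to be attained.
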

As a consequence of the latter result, we can now extend any Lipschitz continuous function on the full Hilbert space:
\begin{cor}[Lipschitz continuous extension]\label{cor:Lipf} Let $H$ be a Hilbert space, $\emptyset \neq C\subseteq H$ closed and convex, $Y$ a metric space. If $r\colon C\to Y$ is Lipschitz continuous, then $r_H\coloneqq r\circ P_C$ extends $r$ on $H$ with
\[
   \|r_H\|_{\textnormal{Lip}}=\|r\|_{\textnormal{Lip}}
\]
\end{cor}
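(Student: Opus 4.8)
The plan is to prove Corollary \ref{cor:Lipf} directly from Proposition \ref{prop:normproj} by composition, checking three things in turn: that $r_H$ indeed extends $r$, that $r_H$ is Lipschitz with constant at most $\|r\|_{\textnormal{Lip}}$, and that the constant is in fact exactly $\|r\|_{\textnormal{Lip}}$.

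First I would observe that $P_C$ maps $H$ onto $C$ and is the identity on $C$: for $x\in C$ the infimum $\inf_{y\in C}\|x-y\|$ is attained (uniquely) at $y=x$, so $P_C x=x$ by the uniqueness part of Proposition \ref{prop:normproj}. Hence $r_H|_C=r\circ P_C|_C=r\circ\id_C=r$, which shows $r_H$ is an extension. Since $P_C$ has range contained in $C=\dom(r)$, the composition $r_H=r\circ P_C\colon H\to Y$ is well-defined.

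Next I would estimate the Lipschitz seminorm. For $x_1,x_2\in H$, using first the Lipschitz continuity of $r$ on $C$ and then the bound $\|P_C\|_{\textnormal{Lip}}\leq 1$ from Proposition \ref{prop:normproj}, one gets
\[
  d_Y\big(r_H(x_1),r_H(x_2)\big)=d_Y\big(r(P_Cx_1),r(P_Cx_2)\big)\leq \|r\|_{\textnormal{Lip}}\,\|P_Cx_1-P_Cx_2\|\leq \|r\|_{\textnormal{Lip}}\,\|x_1-x_2\|,
\]
so $\|r_H\|_{\textnormal{Lip}}\leq\|r\|_{\textnormal{Lip}}$. For the reverse inequality, since $r_H$ agrees with $r$ on $C$, the supremum defining $\|r_H\|_{\textnormal{Lip}}$ over all pairs in $H$ is taken over a set containing all pairs in $C$; restricting to such pairs recovers $\|r\|_{\textnormal{Lip}}$, whence $\|r_H\|_{\textnormal{Lip}}\geq\|r\|_{\textnormal{Lip}}$. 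Combining the two inequalities gives the claimed equality.

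There is essentially no obstacle here: the whole statement is a soft consequence of Proposition \ref{prop:normproj} once one notes that $P_C$ fixes $C$ pointwise. The only minor point worth stating carefully is that $C\neq\emptyset$ is needed for $P_C$ (hence $r_H$) to be defined at all, which is exactly the hypothesis $\emptyset\neq C\subseteq H$; and that the equality of Lipschitz constants (rather than just an inequality) uses the trivial observation that an extension can only have a larger Lipschitz seminorm than the original map.
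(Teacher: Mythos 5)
Your proof is correct and follows essentially the same route as the paper: both arguments note that $P_C$ fixes $C$ pointwise (the paper phrases this as $P_C^2=P_C$), deduce $r_H|_C=r$ and hence $\|r_H\|_{\textnormal{Lip}}\geq\|r\|_{\textnormal{Lip}}$ by restriction, and obtain the reverse inequality by composing the Lipschitz estimate for $r$ with the bound $\|P_C\|_{\textnormal{Lip}}\leq 1$ from Proposition \ref{prop:normproj}. Your write-up is merely a little more explicit about why $P_Cx=x$ for $x\in C$.
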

\begin{proof}
Since by Proposition \ref{prop:normproj}, $P_Cx=x$ for all $x\in C$, it follows that 
\[
   r_H|_C=r,
\]which leads to $ \|r_H\|_{\textnormal{Lip}}\geq \|r\|_{\textnormal{Lip}}$. Proposition \ref{prop:normproj} yields that $r_H$ is Lipschitz continuous and $ \|r_H\|_{\textnormal{Lip}}\leq\|r\|_{\textnormal{Lip}}$.
\end{proof}

\section{Solution theory for state-dependent delay equations}\label{sec:sth}

Throughout this section, we let  $h,T>0$. We emphasise that $T<\infty$. We aim to provide a solution theory for any Lipschitz continuous given pre-history $\phi: [-h,0] \to \R^n$, of the following initial value problem on $(0,T)$,
\begin{equation}\label{eq:sdd}
   \begin{cases}
   x'(t) = g(t,x(t),x(t+r(x_{(t)}))),& t \in (0,T);\\
   x_{0} = \phi.
   \end{cases}
\end{equation}

The main assumptions for the equation just introduced are the following.
\begin{enumerate}
   \item[(H1)] $g\colon [0,T]\times \R^n\times \R^n\to \R^n$ is continuous and uniformly Lipschitz continuous with respect to the spatial variables, i.e., there exists $L\geq 0$ such that for all $t\in [0,T]$ and $x,y,u,v\in \R^n$ we have
   \[
      \|g(t,x,u)-g(t,y,v)\|\leq L(\|x-y\|+\|u-v\|).
   \]
   \item[(H2)] $r\colon H^1(-h,0;\R^n) \to [-h,0]$ is Lipschitz continuous.
\end{enumerate}
{We call $x\colon [-h,T]\to \R^n$ a \textbf{solution of \eqref{eq:sdd}}, if $x$ is continuous and weakly differentiable on $(0,T)$ and so that the first equation in \eqref{eq:sdd} holds for almost all $t\in (0,T)$ and $x|_{[-h,0]}=\phi$.}

We aim to show the following theorem. {In the following, for an interval $(a,b)\subseteq \R$ we say that $\psi\in L_2(a,b;\R^n)$ is \textbf{bounded}, if it is essentially bounded, that is, $\psi\in L_\infty(a,b;\R^n)$. In consequence, $\phi \in H^1(a,b;\R^n)$ has a bounded derivative if and only if $\phi' \in L_\infty(a,b;\R^n)$.}

\begin{thm}[Picard--Lindel\"of for SDDEs]\label{thm:wp} Let $g$ and $r$ be as in \textrm{(H1)} and \textrm{(H2)}. Then for all $\phi \in H^1(-h,0;\R^n)$ with bounded derivative, there exists a unique {solution} $x \in H^1(-h,T;\R^n)$ of \eqref{eq:sdd}.
\end{thm}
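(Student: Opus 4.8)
\textbf{Proof strategy for Theorem \ref{thm:wp}.}

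The plan is to reformulate \eqref{eq:sdd} as a fixed point problem on an exponentially weighted space $H^1_\rho(-h,T;\R^n)$ and apply the contraction mapping principle, where the weight $\rho$ will be chosen large at the very end. First I would fix a bound $\beta$ for the derivative of $\phi$ and enlarge it if necessary so that it dominates the relevant data (for instance a bound on $\|g(t,0,0)\|$ over $[0,T]$ and the initial slope); the set $V_\beta\subseteq H^1(-h,0;\R^n)$ of Remark \ref{rem:Vbeta} is closed and convex, so by Proposition \ref{prop:normproj} the metric projection $P_{V_\beta}$ is available and $1$-Lipschitz. Using $P_{V_\beta}$ and Corollary \ref{cor:Lipf}, I would replace the (only locally controlled) evaluation-composed-with-$r$ map by its globally Lipschitz extension, so that by Theorem \ref{thm:sddcm} the map $\phi\mapsto g(t,\cdot,\ev((\id\times r)(\phi\circ\text{proj})))$ becomes globally Lipschitz in the pre-history argument on all of $H^1(-h,0;\R^n)$, with an explicit constant depending on $\beta$, $h$, $\|g\|_{\mathrm{Lip}}$, $\|r\|_{\mathrm{Lip}}$. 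This is the point where the Hilbert-space retraction machinery is essential and where (H1), (H2) get packaged into a single uniform Lipschitz hypothesis of the type appearing in Theorem \ref{thm:fdewpint}.

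Next I would set up the solution operator. Given $y\in H^1_\rho(-h,T;\R^n)$ with $y_{(0)}=\phi$, define $\Phi(y)$ on $[-h,0]$ to equal $\phi$ and on $[0,T]$ by $\Phi(y)(t)=\phi(0)+\int_0^t g\big(s,y(s),\ev((\id\times r)(\widetilde y_{(s)}))\big)\d s$, where $\widetilde y_{(s)}$ denotes the pre-history composed with the projection onto $V_\beta$. The right-hand side is, for each fixed $s$, a genuine element of $\R^n$ and the integrand is continuous in $s$ because $\hat\Theta y\in C_\rho([0,T];H^1(-h,0))$ by Remark \ref{rem:conthatp}, $P_{V_\beta}$ is continuous, and $g$ is continuous; hence $\Phi(y)\in H^1(-h,T;\R^n)$ and in fact $\Phi(y)$ is Lipschitz with an explicit slope bound $M$ coming from (H1) together with the supremum of $\|y(s)\|$ and the $V_\beta$-bound $\beta$. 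Choosing $\beta$ at the outset $\geq M$ (this is a self-consistency inequality one checks by Gronwall / direct estimation on $\|y\|_\infty$ using the weighted pre-history bound) makes the ball $B\coloneqq\{y\in H^1_\rho(-h,T;\R^n): y_{(0)}=\phi,\ \|y'\|_\infty\le\beta\}$ invariant under $\Phi$; note $B$ is nonempty, closed in $H^1_\rho$, and the weighted norm is equivalent to the unweighted one on the finite interval $[-h,T]$.

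The contraction estimate is where the exponential weight pays off. For $y,z\in B$, writing $w=\Phi(y)-\Phi(z)$ (which vanishes on $[-h,0]$), I would estimate $\|w'(t)\|$ pointwise by $\|g\|_{\mathrm{Lip}}$ times $\|y(t)-z(t)\| + \|(\ev\circ(\id\times r))(\widetilde y_{(t)}) - (\ev\circ(\id\times r))(\widetilde z_{(t)})\|$; the second term is controlled, via the global Lipschitz bound from Step 1 and the $1$-Lipschitz continuity of $P_{V_\beta}$, by a constant times $\|\hat\Theta(y-z)(t)\|_{H^1(-h,0)} = \|\widehat{(y-z)}_{(t)}\|_{H^1}$. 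Squaring, integrating against $\e^{-2\rho t}$, and invoking Theorem \ref{thm:prehnorm} (and its $H^1$-analogue, $\|\hat\Theta\|\le (2\rho)^{-1/2}$) bounds $\|w'\|_{L_{2,\rho}}$ by $\big(C_1 + C_2(2\rho)^{-1/2}\big)\|(y-z)'\|_{L_{2,\rho}}$ up to handling the $L_2$-part of the $H^1$ norm, which is absorbed by the same mechanism since $w(t)=\int_0^t w'$ gives $\|w\|_{L_{2,\rho}}\le (2\rho)^{-1/2}\|w'\|_{L_{2,\rho}}$ (or one uses the standard fact that integration is a contraction of norm $(2\rho)^{-1/2}$ on $L_{2,\rho}(0,T)$). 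Hence for $\rho$ large enough $\Phi$ is a strict contraction on $B$, and Banach's fixed point theorem yields a unique fixed point $x$ in $B$; differentiating the integral equation shows $x$ solves \eqref{eq:sdd}, and since $x\in V_\beta$-type bound holds, $\widetilde x_{(t)}=x_{(t)}$ so the projection is invisible and $x$ genuinely solves the original equation.

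The remaining point is uniqueness in all of $H^1(-h,T;\R^n)$, not merely within $B$: any $H^1$-solution is automatically Lipschitz (its derivative equals $g(\cdot,x(\cdot),x(\cdot+r(x_{(\cdot)})))$, which is bounded once one knows $x$ is continuous and bounded on the compact interval, and the latter follows from the integral equation plus Gronwall), so it lies in some $V_{\beta'}$; enlarging $\beta$ to cover $\beta'$ and rerunning the argument — or, more cleanly, observing that two solutions differ by a function vanishing on $[-h,0]$ and satisfying the same weighted contraction estimate, forcing the difference to be zero — closes the gap. \textbf{The main obstacle} I anticipate is the self-consistency step: one must choose $\beta$ (and the associated projection) \emph{before} running the fixed point argument, yet the natural a priori slope bound $M$ on $\Phi(y)$ depends on $\sup_t\|y(t)\|$, which depends on $\beta$ through the ball $B$; resolving this requires first deriving a $\beta$-independent (or mildly $\beta$-dependent) a priori bound on $\|y\|_\infty$ for $y\in B$ via Gronwall applied to the integral equation, and only then fixing $\beta$ large enough to close the loop — handled in the paper's more general Theorem \ref{thm:fdewpint}, of which this is essentially the special case $G(t,\psi)=g(t,\psi(0),\psi(r(\psi)))$.
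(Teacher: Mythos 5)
Your overall architecture (project onto $V_\beta$ via Proposition \ref{prop:normproj}, use Theorem \ref{thm:sddcm} to obtain a globally Lipschitz right-hand side, run a contraction in an exponentially weighted norm using Theorem \ref{thm:prehnorm} and Proposition \ref{prop:td}) is exactly the paper's, and your uniqueness argument matches Lemma \ref{lem:uniqueness}. The gap is in the step you yourself flag as the main obstacle: the invariance of the ball $B=\{y:\ y_{(0)}=\phi,\ \|y'\|_\infty\le\beta\}$ under $\Phi$. For a generic $y\in B$ the only available bound on $\sup_t\|y(t)\|$ is $\|\phi\|_\infty+\beta T$, so the slope bound you obtain for $\Phi(y)$ has the form $M\le C_0+2L(\|\phi\|_\infty+\beta T)$, and the self-consistency requirement $M\le\beta$ reads $\beta(1-2LT)\ge C_0+2L\|\phi\|_\infty$, which admits no $\beta>0$ once $2LT\ge 1$. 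Your proposed repair --- a ``$\beta$-independent a priori bound on $\|y\|_\infty$ for $y\in B$ via Gronwall applied to the integral equation'' --- is not available: Gr\"onwall controls fixed points (actual solutions), not arbitrary elements of $B$, and $B$ genuinely contains functions with $\|y\|_\infty$ of order $\beta T$. So the ball-invariance route only yields a short-time result.

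The paper closes this gap differently. It runs the contraction for the \emph{projected} equation on the whole space $H^1_{0,\rho}(0,T;\R^n)$ (Theorem \ref{thm:wpsdd1}), with no invariant ball needed; it then removes the projection only on a short initial interval, using the continuity of $t\mapsto x'(t)$ at $t=0+$ together with $\|g(0,\phi(0),\phi(r(\phi)))\|<\beta$ (Theorem \ref{thm:localsolth}); and it finally reaches the full horizon $T$ by a continuation argument: taking maximal times $T_{k\beta}$ for the parameters $\beta,2\beta,3\beta,\dots$, a failure to reach $T$ would force $\|x'(T_{k\beta})\|=k\beta\to\infty$ and hence $\sup\|x\|=\infty$, contradicting the Gr\"onwall bound of Lemma \ref{lem:bdd} --- which is legitimately applied there because it concerns an actual solution of \eqref{eq:sdd} on the interval where it exists, not a generic member of a ball. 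If you replace your invariance step by this local-removal-plus-blow-up scheme, the rest of your proposal goes through.
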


\begin{rem}\label{rem:sm}

(a) By the Sobolev embedding theorem, any { $x \in H^1(-h,T;\R^n)$} is continuous. This, however, does not require the derivative of $x$ to be continuous---even if $x$ is solution of \eqref{eq:sdd}. This has the following effect: the continuity of $t\mapsto \tau_t x$ as an $L_2$ (or $H^1$)-valued function (see Proposition \ref{prop:pre-h-cont}) forces the right-hand side of \eqref{eq:sdd} to be continuous in $t$. Thus, as $t\to0+$, the right-hand side converges to $g(0,\phi(0),\phi(r(\phi))$; the left-hand side does also converge; the limit, however, is $x'(0+)$ and is \emph{a priori unrelated} to $\phi'$. Therefore,  $\phi$ does not `see' the solution manifold. As a consequence, in Theorem \ref{thm:wp} there is no restriction on the initial value other than its (bounded) weak differentiability. In particular, the solution manifold is not needed for the formulation of the theorem.

(b) It follows from the equation, that any solution $x\in H^1(-h,T;\R^n)$ of \eqref{eq:sdd} satisfies $x\in H^1(-h,T;\R^n)\subseteq C([-h,T],
\R^n)$ and $x|_{[0,T]}\in C^1([0,T];\R^n)$. Hence, if $T>h$ and $x \in H^1(-h,T;\R^n)$ solves \eqref{eq:sdd} for some $\phi\in H^1(-h,0;\R^n)$ with bounded derivative; then $x(h+\cdot)$ belongs to the solution manifold as introduced in \cite{Walther}.

(c) A special case of Theorem \ref{thm:wp} is the classical form of the theorem of Picard--Lindel\"of, see e.g.~\cite[Theorem 4.2.6 (and proof)]{STW22}.

(d) Let $\phi\colon (a,b)\to \R^n$ for some $-\infty<a<b<\infty$. Then $\phi\in H^1(a,b;\R^n)$ with bounded derivative if and only if $\phi$ is Lipschitz continuous. Indeed, the `only if'-part is a straightforward consequence of integration by parts and a standard estimate for the integral. For the `if'-part note that Lipschitz continuity of $\phi$ implies boundedness of $\phi$ on $(a,b)$. Hence, $\phi\in L_2(a,b;\R^n)$. By Rademacher's theorem (see, e.g., \cite[Section 3.1.2]{EG15}), the differential quotient of $\phi$ exists at almost every point; by Lipschitz continuity, the differential quotient is bounded by $\|\phi\|_{\textnormal{Lip}}$. Hence, Lebesgue's dominated convergence theorem  implies that $\phi$ is weakly differentiable with derivative given by the almost everywhere limit of the difference quotients. As these are uniformly bounded, the weak derivative $\phi'$ is bounded and hence in $L_2(a,b;\R^n)$, eventually yielding $\phi\in H^1(a,b;\R^n)$ with bounded derivative.

(e) A combination of the arguments in (a) and (d) shows that any solution of \eqref{eq:sdd} is Lipschitz continuous. Indeed, for this it suffices to observe that $[0,T]\ni t\mapsto g(t,x(t),x(t+r(x_{t})))$ is continuous and that the pre-history $\phi$ is Lipschitz continuous by assumption.
\end{rem}

The next (non-)example of Theorem \ref{thm:wp} is a variant of the one provided in \cite{W70}. This example shows that the assumptions in Theorem \ref{thm:wp} are sharp. We emphasise, however, that the considered equation does satisfy (H1) and (H2) and, thus, Lipschitz continuous pre-histories imply  unique existence of solutions by Theorem \ref{thm:wp}.
\begin{example}\label{exa:sharp} Consider the state-dependent delay equation given by
\[
    x'(t) = -x(t- \min\{|x(t)|,2\})
\]
with $h=2$ and $T>0$. Then $g(t,x,u)=-u$ and $r\colon H^1(-h,0)\to [-h,0]$ given by $r(\phi)=-\min\{|\phi(0)|,2\}$ satisfy the assumptions of Theorem \ref{thm:wp}. Indeed, (H1) is clear. For (H2), we estimate for $\phi,\psi\in H^1(-h,0)$
\[
   |r(\phi)-r(\psi)| = |\min\{|\phi(0)|,2\}-\min\{|\psi(0)|,2\}|\leq |\phi(0)-\psi(0)|\leq\tfrac{3\sqrt{2}}{2} \|\phi-\psi\|_{H^1(-h,0)},
\]
by Theorem \ref{thm:set}. For $T>0$ sufficiently small both $x_1(t)=1+t$ and $x_2(t)=1+t-t^3$ satisfy the differential equation with the pre-history
\[
   \phi(t) \coloneqq \begin{cases} -1,& -2\leq t<-1\\
   3(t+1)^{2/3}-1,& -1\leq t\leq -\tfrac{\sqrt{27}-1}{\sqrt{27}},\\
   \tfrac{\sqrt{27}}{\sqrt{27}-1} t +1,& -\tfrac{\sqrt{27}-1}{\sqrt{27}}< t\leq 0,\\
   \end{cases}
\]
As $\phi\in H^1(-h,0)$ and uniqueness fails for this pre-history, a solution theory on all of $H^1(-h,0)$ cannot be expected even if $r$ is supposed to be globally Lipschitz a priori. Note that Theorem \ref{thm:wp} does not apply as $\phi$ is not Lipschitz continuous for its derivative blows up at $t=-1$.
\end{example}

The proof of Theorem \ref{thm:wp} requires several preliminary results. To begin with, as in the classical setting of ordinary differential equations, we shall reformulate the initial value problem \eqref{eq:sdd} into an integral equation. Similar to \cite{HT97}, we define for $\phi\in H^1(-h,0;\R^n)\subseteq C([-h,0];\R^n)$
\[
\hat{\phi} (t)\coloneqq \begin{cases} \phi(t),& t\in (-h,0]\\
\phi(0),& t\in (0,T)
\end{cases}
\]and aim to derive a fixed point problem for functions vanishing on $[-h,0]$. Note that $\hat\phi \in H^1(-h,T;\R^n)$. Indeed, {let $\psi\in C_c^\infty(-h,T)$ then
\begin{align*}
  -\int_{-h}^T \hat{\phi}(t)\psi'(t)\d  t& =  -\int_{-h}^0 \hat{\phi}(t)\psi'(t)\d  t -  \int_{0}^T \hat{\phi}(t)\psi'(t)\d  t \\
  &=  -\int_{-h}^0 {\phi}(t)\psi'(t)\d  t -  \int_{0}^T \phi(0)\psi'(t)\d  t \\
  &= {\phi}(0-)\psi(0-)-{\phi}(-h+)\psi(-h+)+ \int_{-h}^0 {\phi}'(t)\psi(t)\d  t - \phi(0+)\psi(0+) \\
  &=  \int_{-h}^0 {\phi}'(t)\psi(t)\d  t,
\end{align*}where we used the well-known integration by parts formula for $H^1$-functions, that ${\phi}(0-)={\phi}(0+)$ by the Theorem \ref{thm:set} and that $\psi$ vanishes at $-h$ and $T$.} The integral equation formulation now reads as follows.
\begin{lem}\label{lem:inteq} Let $x\in H^1(-h,T;\R^n)$ and $p\colon H^1(-h,0;\R^n)\to H^1(-h,0;\R^n)$ be a continuous mapping, $\phi\in H^1(-h,0;\R^n)$. Then the following conditions are equivalent:
\begin{enumerate}
  \item[(i)] $x$ satisfies 
  \begin{equation}\label{eq:xpint}
       x' = g(\cdot,x(\cdot), \ev(\id\times r)(p (x_{(\cdot)}))))\text{ as {equality in} }{L_2(0,T;\R^n)}
  \end{equation}
  and $x_0=\phi$.
  \item[(ii)] $x=y+\hat{\phi}$, where $y(t)=0$ on $(-h,0]$ and for $t\in (0,T)$
  \[
     y(t) = \int_0^t g(s,y(s)+\phi(0), \ev(\id\times r)(p((y+\hat{\phi})_{s}))) \d s
  \] and $y\in H^1(-h,T;\R^n)$.
\end{enumerate}
\end{lem}
\begin{proof}We note that the right-hand side of the differential equation (in (i) and the integrand in (ii)) is continuous by composition of continuous maps and the continuity of the shift taking values in $H^1$, see Proposition \ref{prop:pre-h-cont}.

(i)$\Rightarrow$(ii)  Integration over $(0,t)$ for some $t<T$ of both left- and right-hand sides of \eqref{eq:xpint} yields, using integration by parts on the left-hand side,
\[
   x(t)-x(0) = \int_{0}^t g(s,x(s),x(s+r(p (x_{(s)}))))\d s.
\]Defining $y(t)\coloneqq x(t)-\hat{\phi}(t)$, we obtain the desired integral equation; where we also used that $x(0)=\phi(0)$ due to continuity of both $x$ and $\phi$.

(ii)$\Rightarrow$(i) The claim follows using the fundamental theorem of calculus from the integral equation after having differentiated with respect to $t$. 
\end{proof}

\begin{rem} We apply Lemma \ref{lem:inteq} in two different cases, one for $p=\id$ and two for $p=\pi_\beta$, where
for $\beta>0$, we denote $\pi_\beta\coloneqq P_{V_\beta} \colon H^1(-h,0)\to H^1(-h,0)$, the orthogonal projection onto $V_\beta$ as defined in Proposition \ref{prop:normproj}. 
\end{rem}

Next, we prove exponential bounds for any solution of \eqref{eq:sdd}.

\begin{lem}\label{lem:bdd} Let $\phi\in H^{1}(-h,0;\R^n)$ with bounded derivative and let $x\in H^1(-h,T;\R^n)$ be a solution of \eqref{eq:sdd}. Then, for all $t\in [0,T]$, we have
\[
   \|x(t)\|\leq (\|\phi\|_{\infty}+\int_0^t \|g(s,0,0)\|\d s)\e^{2Lt}.
\]
\end{lem}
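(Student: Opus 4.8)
The plan is to convert the differential equation into its integral form and then apply a Gr\"onwall-type argument. Since $x$ solves \eqref{eq:sdd}, integrating over $(0,t)$ and using $x(0)=\phi(0)$ gives, for $t\in[0,T]$,
\[
  x(t)=\phi(0)+\int_0^t g\bigl(s,x(s),x(s+r(x_{(s)}))\bigr)\d s .
\]
Taking norms and inserting $g(s,0,0)$ as an intermediate term, Hypothesis (H1) yields
\[
  \|x(t)\|\le \|\phi(0)\|+\int_0^t\|g(s,0,0)\|\d s+L\int_0^t\bigl(\|x(s)\|+\|x(s+r(x_{(s)}))\|\bigr)\d s .
\]
The first two summands are bounded above by the $\phi$-dependent constant $c(t)\coloneqq\|\phi\|_\infty+\int_0^t\|g(s,0,0)\|\d s$, which is nondecreasing in $t$.

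The key observation handling the \emph{delayed} term is that, because $r$ takes values in $[-h,0]$, the retarded argument $s+r(x_{(s)})$ lies in $[-h,s]\subseteq[-h,t]$. Hence I would introduce the running maximum
\[
  m(t)\coloneqq \sup_{-h\le \sigma\le t}\|x(\sigma)\|,
\]
which is finite by the Sobolev embedding (Theorem \ref{thm:set}), nondecreasing, and satisfies both $\|x(s)\|\le m(s)$ and $\|x(s+r(x_{(s)}))\|\le m(s)$ for $s\in[0,t]$. For $\sigma\in[-h,0]$ we have $\|x(\sigma)\|=\|\phi(\sigma)\|\le\|\phi\|_\infty\le c(t)$, so taking the supremum over $\sigma\in[-h,t]$ in the displayed estimate above gives
\[
  m(t)\le c(t)+2L\int_0^t m(s)\d s .
\]
Now the integral form of Gr\"onwall's inequality applies: since $c$ is nondecreasing, $m(t)\le c(t)\e^{2Lt}$, and in particular $\|x(t)\|\le m(t)\le c(t)\e^{2Lt}$, which is exactly the claimed bound.

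The only mild subtlety — the step I would be most careful about — is the passage to the running maximum: one must check that $m$ is well-defined and finite (continuity of $x$ on the compact interval $[-h,t]$, from Theorem \ref{thm:set}), that the pre-history contributes only $\|\phi\|_\infty\le c(t)$ to the supremum, and that the delayed term is genuinely controlled by $m(s)$ rather than $m(t)$ so that Gr\"onwall can be invoked with the integration variable. Everything else is routine: the reduction to the integral equation is Lemma \ref{lem:inteq} (with $p=\id$), and the remaining manipulations are the triangle inequality, Hypothesis (H1), and the classical Gr\"onwall lemma.
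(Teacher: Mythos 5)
Your proposal is correct and follows essentially the same route as the paper's proof: integrate the equation, insert $g(s,0,0)$, bound both the current and the delayed term by the running maximum $\sup_{\sigma\in[-h,t]}\|x(\sigma)\|$ (the paper's $\kappa$), and apply Gr\"onwall. Your explicit observation that the pre-history portion of the supremum is controlled by $\|\phi\|_\infty$ is in fact slightly more careful than the paper's write-up, which starts from $\|\phi(0)\|$ before stating the bound with $\|\phi\|_\infty$.
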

\begin{proof}
We start off by recalling that $x\in H^1(-h,T;\R^n)$ is continuous by Theorem \ref{thm:set}. For $t \in [0,T]$, we introduce
\[
    \kappa (t)\coloneqq \sup_{s\in [-h,t]}\|x(s)\|=\max_{s\in [-h,t]}\|x(s)\|.
\]
Since $x$ is continuous, so is $\kappa$. Hence, for $t\in [0,T]$, we estimate using Lemma \ref{lem:inteq},
\begin{align*}
   \|x(t) \|& \leq \|\phi(0)\| + \big\|\int_0^t g(s,x(s), x(s+r(x_{(s)})))\d s\big\| \\
   & \leq \|\phi(0)\| + \int_0^t \|g(s,x(s), x(s+r(x_{(s)})))-g(s,0,0)\|\d s  +\int_0^t \|g(s,0,0)\|\d s    \\
      & \leq \|\phi(0)\| + \int_0^t L(\|x(s)\|+\|x(s+r(x_{(s)}))\|)\d s  +\int_0^t \|g(s,0,0)\|\d s    \\
      & \leq \|\phi(0)\|+\int_0^t \|g(s,0,0)\|\d s + \int_0^t 2L\max_{\tau\in [-h,s]}\|x(\tau)\|\d s.
\end{align*}
Then, with $\alpha(t)\coloneqq \|\phi\|_{\infty}+\int_0^t \|g(s,0,0)\|\d s$, {where $\|\phi\|_\infty = \max_{s\in [-h,0]}\|\phi(s)\|$,} 
\begin{equation}\label{eq:kats}
\kappa(t)\leq \alpha(t)+2L\int_0^t \kappa(s)\d s.
\end{equation}
{Indeed, let $t\in [0,T]$ and $t^*\in [-h,t]$ be such that $\kappa(t) = \|x(t^*)\|$. If $t^*\in [-h,0]$, then $x(t^*)=\phi(t^*)$ and
\[
   \kappa(t)=\|x(t^*)\|=\|\phi(t^*)\|\leq \|\phi\|_\infty \leq \alpha(t)+2L\int_0^t \kappa(s)\d s.
\]
If $t^*\in (0,t]$, then by our estimate above, we deduce with $t^*\leq t$
\[
  \kappa(t)=\|x(t^*)\|_{\R^n} \leq \alpha(t^*)+2L\int_0^{t^*}\kappa(s)\d s\leq \alpha(t)+2L\int_0^{t}\kappa(s)\d s.
\]}
Thus, \eqref{eq:kats} and Gronwall's inequality imply
\[
   \|x(t) \|\leq \kappa(t)\leq \alpha(t)\e^{2Lt}.\qedhere
\]
\end{proof}
The following auxiliary statement is a variant of \cite[p 44 (see also Lemma 3.2.1)]{STW22} (see also \cite[p 1772]{Picard2009}) on finite time intervals.
\begin{prop}\label{prop:td} For all $\rho>0$, the operator
\[
I_\rho \colon    L_{2,\rho}(0,T) \to    L_{2,\rho}(0,T)
\]
given by 
\[
   I_\rho f(t)\coloneqq \int_0^t f(s)\d s 
\]
is continuous with $\|I_{\rho}\|\leq 1/\rho$.
\end{prop}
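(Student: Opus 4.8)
The plan is to estimate $\|I_\rho f\|_{L_{2,\rho}(0,T)}$ directly by inserting the exponential weight into the integral and applying Cauchy--Schwarz in a weighted form. First I would write out, for $f\in L_{2,\rho}(0,T)$ and $t\in(0,T)$,
\[
  \e^{-\rho t}\Bigl|\int_0^t f(s)\d s\Bigr|
  = \Bigl|\int_0^t \e^{-\rho(t-s)}\bigl(\e^{-\rho s}f(s)\bigr)\,\e^{\rho(-t+s)}\cdot\e^{\rho s}\d s\Bigr|,
\]
i.e. I split the kernel so that one factor $\e^{-\rho s}f(s)$ carries the weighted $L_2$-mass of $f$ and the other factor $\e^{-\rho(t-s)}$ is an $L_1$-in-$s$ convolution kernel of mass $\le 1/\rho$. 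Applying Cauchy--Schwarz with respect to the measure $\e^{-\rho(t-s)}\d s$ on $(0,t)$ gives
\[
  \e^{-2\rho t}\Bigl|\int_0^t f(s)\d s\Bigr|^2
  \le \Bigl(\int_0^t \e^{-\rho(t-s)}\d s\Bigr)\Bigl(\int_0^t \e^{-\rho(t-s)}\e^{-2\rho s}|f(s)|^2\,\e^{2\rho s}\d s\Bigr),
\]
so that after simplifying the weights on the right the first bracket is $\le 1/\rho$ and the second is $\int_0^t \e^{-\rho(t-s)}|f(s)|^2\d s$ (note the $\e^{2\rho s}$ has cancelled, which is the point of the splitting).

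Next I would integrate in $t$ over $(0,T)$, apply Tonelli's theorem to swap the order of integration, and bound the inner $t$-integral $\int_s^T \e^{-\rho(t-s)}\d t \le 1/\rho$. This yields
\[
  \|I_\rho f\|_{L_{2,\rho}(0,T)}^2
  \le \frac{1}{\rho}\int_0^T\Bigl(\int_s^T \e^{-\rho(t-s)}\d t\Bigr)|f(s)|^2\d s
  \le \frac{1}{\rho^2}\int_0^T |f(s)|^2\d s.
\]
Since $\int_0^T|f(s)|^2\d s \le \int_0^T|f(s)|^2\e^{-2\rho s}\d s\cdot\e^{2\rho\cdot 0}$ is not quite what is wanted, I would instead be slightly more careful and keep the weight $\e^{-2\rho s}$ attached throughout; redoing the bookkeeping with $\e^{-\rho s}f(s)$ as the genuine $L_2(0,T)$-function $\tilde f$ (unweighted) reduces the whole computation to the elementary statement that convolution with $\1_{(0,\infty)}\e^{-\rho\,\cdot}\in L_1$ has operator norm $\le\|\e^{-\rho\,\cdot}\|_{L_1(0,\infty)}=1/\rho$ on $L_2(0,T)$. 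Either bookkeeping route gives $\|I_\rho\|\le 1/\rho$.

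I do not anticipate a genuine obstacle here; the only thing to get right is the placement of the exponential weights so that Cauchy--Schwarz is applied against the correct finite-mass kernel and no spurious $\e^{2\rho s}$ factor survives. The finite interval $(0,T)$ only helps (all integrals are the same or smaller than over $(0,\infty)$), so the bound $1/\rho$ is inherited from the half-line case recalled from \cite[Lemma 3.2.1]{STW22}; alternatively one can simply restrict the half-line operator and use that restriction does not increase the norm, but giving the self-contained Cauchy--Schwarz computation above is cleaner.
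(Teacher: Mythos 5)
Your proposal is correct and follows essentially the same route as the paper: conjugate by the exponential weight so that $\e^{-\rho t}I_\rho f(t)$ becomes the convolution of $\tilde f=\e^{-\rho\,\cdot}f\in L_2(0,T)$ with the kernel $\1_{(0,T)}\e^{-\rho\,\cdot}$ of $L_1$-mass at most $1/\rho$, then apply Cauchy--Schwarz against that kernel and Tonelli --- which is precisely the Young-type estimate the paper writes out line by line. The only blemish is the weight bookkeeping in your first two displays (the inserted factors $\e^{\rho(-t+s)}\cdot\e^{\rho s}$ break the claimed identity, and the correct Cauchy--Schwarz bound keeps $\e^{-2\rho s}|f(s)|^2$ in the second bracket rather than cancelling it), but you flag this yourself and the clean conjugation/convolution argument you fall back on is exactly right.
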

\begin{proof} {As $L_{2,\rho}(0,T)$ equals $L_{2}(0,T)$ with an equivalent norm, and since $L_2(0,T)\subseteq L_1(0,T)$ continuously, every $f\in L_{2,\rho}(0,T)$ is integrable on $(0,T)$. In particular, $\int_0^t f(s)\d s$ exists and, as a Lebesgue integral, it is independent of the representative.}  Let $f\in L_{2,\rho}(0,T)$. Then we compute
\begin{align*}
  \|I_\rho f\|^2 &= \int_0^T \big\|\int_0^t f(s)\d s\big\|^2 \e^{-2\rho t}\d t \\
  & \leq \int_0^T \big(\int_0^T \|f(s)\|\1_{(0,T)}(t-s) \d s\big)^2 \e^{-2\rho t}\d t \\
  &  = \int_0^T \big(\int_0^T \|f(s)\|\e^{-\rho s}\1_{(0,T)}(t-s)\e^{-\rho (t-s)} \d s\big)^2 \d t \\
  &  \leq \int_0^T \int_0^T \|f(s)\|^2\e^{-2\rho s}\1_{(0,T)}(t-s)\e^{-\rho (t-s)} \d s\big(\int_0^T \1_{(0,T)}(t-s)\e^{-\rho (t-s)} \d s\big) \d t
  \\
  & \leq \frac{1}{\rho} \int_0^T \int_0^T \|f(s)\|^2\e^{-2\rho s}\1_{(0,T)}(t-s)\e^{-\rho (t-s)} \d t\d s
   \leq \frac{1}{\rho^2}\|f\|_{L_{2,\rho}}^2.  \qedhere 
  \end{align*}
\end{proof}
We introduce, for $\rho\geq 0$,
\begin{equation}\label{eq:h100}
   H_{0,\rho}^1(0,T;\R^n)\coloneqq \{y\in H_\rho^1(-h,T;\R^n); y = 0 \text{ on } (-h,0)\}.\end{equation}
   As a consequence of the previous proposition, we get the following estimate.
   \begin{lem}\label{lem:intbdd}
     Let $\rho>0$, $f\in H_{0,\rho}^1(0,T;\R^n)$. Then $\|f\|_{L_{2,\rho}}\leq \frac{1}{\rho}\|f'\|_{L_{2,\rho}}$.
   \end{lem}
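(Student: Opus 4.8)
The plan is to deduce this Poincar\'e-type estimate directly from Proposition \ref{prop:td} by integrating the weak derivative. Let $f\in H_{0,\rho}^1(0,T;\R^n)$. Since $f$ vanishes on $(-h,0]$ and is continuous (Theorem \ref{thm:set}), in particular $f(0)=0$, so by the fundamental theorem of calculus $f(t)=\int_0^t f'(s)\d s$ for all $t\in[0,T]$. In the language of Proposition \ref{prop:td}, this reads $f|_{(0,T)} = I_\rho(f'|_{(0,T)})$, where $f'$ is understood as a function in $L_{2,\rho}(0,T;\R^n)$. Hence
\[
   \|f\|_{L_{2,\rho}(0,T)} = \|I_\rho(f')\|_{L_{2,\rho}(0,T)} \leq \tfrac{1}{\rho}\|f'\|_{L_{2,\rho}(0,T)} \leq \tfrac{1}{\rho}\|f'\|_{L_{2,\rho}(-h,T)},
\]
using $\|I_\rho\|\leq 1/\rho$ from Proposition \ref{prop:td} in the middle. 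Finally, since $f' = 0$ on $(-h,0]$ as well (the weak derivative of the zero function is zero), one actually has $\|f'\|_{L_{2,\rho}(0,T)} = \|f'\|_{L_{2,\rho}(-h,T)}$, so the estimate $\|f\|_{L_{2,\rho}}\leq \tfrac{1}{\rho}\|f'\|_{L_{2,\rho}}$ holds with the norms taken over the full interval $(-h,T)$, matching the statement.

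The two points that need a word of care are: first, that Proposition \ref{prop:td} is stated for scalar-valued functions, but the componentwise application (or simply re-reading the proof with $\|\cdot\|$ denoting the Euclidean norm on $\R^n$) gives the $\R^n$-valued version verbatim; and second, the identification of the $L_2$-norm over $(0,T)$ with the $L_2$-norm over $(-h,T)$ for functions supported in $[0,T]$, which is immediate. I do not anticipate a genuine obstacle here — the only subtlety is bookkeeping the interval of integration, since $H_{0,\rho}^1(0,T;\R^n)$ is defined as a subspace of $H_\rho^1(-h,T;\R^n)$ while Proposition \ref{prop:td} lives on $(0,T)$; once one notes that both $f$ and $f'$ vanish on $(-h,0]$ this is harmless.
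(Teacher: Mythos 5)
Your proof is correct and is essentially the paper's own argument: write $f=I_\rho f'$ via the fundamental theorem of calculus using $f(0)=0$, then invoke Proposition \ref{prop:td}. The extra remarks about the vector-valued case and the interval of integration are sound bookkeeping but do not change the substance.
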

   \begin{proof}
   {By the integration by parts formula for $H^1$-functions and due to $f$ vanishing at $0$, we obtain $I_\rho (f|_{(0,T)}') = f|_{(0,T)}$. As $f=f'=0$ on $(-h,0)$, the claim follows from Proposition \ref{prop:td}.}
   \end{proof}
  We recall that $\pi_\beta\coloneqq P_{V_\beta} \colon H^1(-h,0;\R^n)\to H^1(-h,0;\R^n)$ denotes the projection onto $V_\beta$ as defined in Proposition \ref{prop:normproj}, where we also showed that $\|\pi_\beta\|_{\textnormal{Lip}}\leq 1$ as a mapping from $H^1(-h,0;\R^n)$ to $H^1(-h,0;\R^n)$.
\begin{thm}\label{thm:wpsdd1} Let $\beta>0$ and $\phi\in H^1(-h,0;\R^n)$. Then there is a unique $y\in H^1(-h,T;\R^n)$ satisfying
  \begin{equation}\label{eq:yt}
     y(t) = \int_0^t g(s,y(s)+\phi(0), \ev(\id\times r)(\pi_\beta( (y+\hat{\phi})_{s}))) \d s\quad(0<t<T)
  \end{equation} 
  and $y=0$ on $(-h,0)$.
\end{thm}
\begin{proof} {For $\rho>0$} consider the mapping $F\colon H_{0,\rho}^1(0,T;\R^n)\to H_{0,\rho}^1(0,T;\R^n)$ given by
  \[
     (Fy)(t) \coloneqq \int_0^t g(s,y(s), \ev(\id\times r)(\pi_\beta (y+\hat{\phi})_{(s)})) \d s\quad(0< t <T),
  \]
  where the integrand is continuous by Remark \ref{rem:conthatp} (continuity of the pre-history map as $H_\rho^1$-valued function), Proposition \ref{prop:normproj} (continuity of $\pi_\beta$), the continuity of $H^1$-functions by Theorem \ref{thm:set}  and the continuity of $g$.
  
  We shall prove that $F$ is a strict contraction for $\rho$ sufficiently large. For this, recall that $\|\pi_\beta\|_{\textnormal{Lip}}\leq 1$ by {the line just before Theorem \ref{thm:wpsdd1}}. For $u,v\in H_{0,\rho}^1(0,T)$, {using Proposition \ref{prop:td} (in the second line), Theorem \ref{thm:sddcm} (in the fourth line), and  Theorem \ref{thm:prehnorm} (and subsequent remark; in the seventh line),} we compute 
  \begin{align*}
    &\|Fu-Fv\|_{L_{2,\rho}}\\ &\leq \frac{1}{\rho}\big\| g(\cdot,u(\cdot), \ev(\id\times r)(\pi_\beta (u+\hat{\phi})_{(\cdot)})) -  g(\cdot,v(\cdot), \ev(\id\times r)(\pi_\beta (v+\hat{\phi})_{(\cdot)}))\big\|_{L_{2,\rho}(0,T;\R^n)} \\
    & \leq \frac{1}{\rho} L( \|u-v\|_{L_{2,\rho}(0,T;\R^n)}+\|\ev(\id\times r)(\pi_\beta (u+\hat{\phi})_{(\cdot)})-\ev(\id\times r)(\pi_\beta (v+\hat{\phi})_{(\cdot)})\|_{L_{2,\rho}(0,T;\R^n)})
    \\ & \leq \frac{1}{\rho} L( \|u-v\|_{L_{2,\rho}(0,T;\R^n)}\\
&   \quad + (2h^{1/2}+\beta \|r\|_{\textnormal{Lip}} + h^{-1/2})\| \|\pi_\beta((u+\hat{\phi})_{(\cdot)})-\pi_\beta (v+\hat{\phi})_{(\cdot)})\|_{H^1(-h,0;\R^n)}\|_{L_{2,\rho}(0,T)})
    \\ & \leq \frac{1}{\rho} L( \|u-v\|_{L_{2,\rho}(0,T;\R^n)}+(2h^{1/2}+\beta \|r\|_{\textnormal{Lip}} + h^{-1/2})\| \|u_{(\cdot)}-v_{(\cdot)}\|_{H^1(-h,0;\R^n)}\|_{L_{2,\rho}(0,T)})
    \\ & = \frac{1}{\rho} L( \|u-v\|_{L_{2,\rho}(0,T;\R^n)}+(2h^{1/2}+\beta \|r\|_{\textnormal{Lip}} + h^{-1/2})\| \hat{\Theta}u-\hat{\Theta}v\|_{L_{2,\rho}(0,T;H^1(-h,0;\R^n))}) 
    \\ & \leq \frac{1}{\rho} L( \|u-v\|_{L_{2,\rho}(0,T;\R^n)}+\frac{1}{\sqrt{2\rho}}(2h^{1/2}+\beta \|r\|_{\textnormal{Lip}} + h^{-1/2})\| u-v\|_{H^1_{\rho}(-h,T;\R^n)})\\ & \leq \frac{1}{\rho} L( \|u-v\|_{L_{2,\rho}(0,T;\R^n)}+\frac{1}{\sqrt{2\rho}}(2h^{1/2}+\beta \|r\|_{\textnormal{Lip}} + h^{-1/2})\| u-v\|_{H^1_{\rho}(0,T;\R^n)}),
  \end{align*}
  where the last equality follows from $u,v\in H_{0,\rho}^1(0,T)$.
  Similarly, we get
  \begin{align*}
&     \|(Fu)'-(Fv)'\|_{L_{2,\rho}(0,T;\R^n)}\\ & \leq L\|u-v\|_{L_{2,\rho}(0,T;\R^n)}+L\frac{1}{\sqrt{2\rho}}(2h^{1/2}+\beta \|r\|_{\textnormal{Lip}} + h^{-1/2})\|u-v\|_{H^1_{\rho}(0,T;\R^n)} \\
   &  \leq L\big(\frac{1}{\rho}+\frac{1}{\sqrt{2\rho}}(2h^{1/2}+\beta \|r\|_{\textnormal{Lip}} + h^{-1/2})\big)\|u-v\|_{H^1_{\rho}(0,T;\R^n)}  \end{align*}
  {We now choose $\rho$ large enough, in order to be able to identify $F$ as a strict contraction. Thus, by the contraction mapping principle, a unique existence of a fixed point in $y^*\in H_{0,\rho}^1(0,T;\R^n)$ is guaranteed. In order to prove the claim on uniqueness, let $y\in H^1(-h,T;\R^n)$ {satisfy \eqref{eq:yt}. Moreover, assume that $y=0$ on $(-h,0)$. } Hence,  $y\in H_{0,0}^1(0,T;\R^n)$. Since the sets $H_{0,0}^1(0,T;\R^n)$ and $H_{0,\rho}^1(0,T;\R^n)$ coincide as their respective norms are equivalent; we obtain $y\in H_{0,\rho}^1(0,T;\R^n)$ and, by uniqueness in $H_{0,\rho}^1(0,T;\R^n)$, it follows that $y=y^*$.  }
\end{proof}

Next, we provide a criterion for when we can drop the projection $\pi_\beta$ onto $V_\beta$.

\begin{thm}\label{thm:localsolth} Let $\phi\in H^1(-h,0;\R^n)$, $\beta>0$ and {choose $y\in H_{0,0}^1(0,T;\R^n) $ according to Theorem \ref{thm:wpsdd1}}. If $\phi\in V_\beta$ and
\[
   \|g(0,\phi(0),\phi(r(\phi)))\|<\beta,
\]
then for some $0<T_\beta \leq T$,  { $y +\hat{\phi}$ is a solution of \eqref{eq:sdd} on $(-h,T_\beta)$, that is, $y+\hat{\phi}$ is weakly differentiable on $(-h,T_\beta)$, $(y+\hat{\phi})_0=\phi$ and 
\[
   (y+\hat{\phi})'(t) = g(t,(y+\hat{\phi})(t),(y+\hat{\phi})(t+r((y+\hat{\phi})_{(t)}))),
\]for almost every $t\in (0,T_\beta)$. Moreover, $T_\beta$ may be chosen so that either $T_\beta=T$ or for all $0< t<T_\beta$, $\|y'(t)\|<\beta$ and $\|y'(T_\beta)\|=\beta$}
\end{thm}
\begin{proof} {Recall that by Theorem \ref{thm:set}, we may further argue having chosen the continuous representative of $y$. Then} $y(0)=0$. Moreover,  
\begin{align*}
    & \|(y+\hat{\phi})_s - \phi\|_{H^1(-h,0;\R^n)}^2 = \|\tau_s(y+\hat{\phi}) - \phi\|_{H^1(-h,0;\R^n)}^2\to 0
\end{align*}
as $s\to 0$, by Proposition \ref{prop:pre-h-cont} (and Remark \ref{rem:conthatp}). Hence, 
\[
    y'(t) = g(t,y(t)+\phi(0),\ev(\id\times r)(\pi_\beta (y+\phi)_{(t)}))\to g(0,\phi(0),\phi(r(\phi)))
\]
as $t\to 0+$. As $\|g(0,\phi(0),\phi(r(\phi)))\|<\beta$, we find $0<T_\beta\leq T$ such that 
\[
   \sup_{t\in (0,T_\beta)}\|y'(t)\|<\beta
\]
Hence, for all $t\in (0,T_\beta)$: $\pi_\beta( (y+\phi)_{(t)})= (y+\phi)_{(t)}$. { By Theorem \ref{thm:wpsdd1} and Lemma \ref{lem:inteq}, the part concerning existence of some $0<T_\beta\leq T$ is proven. For the remaining statement of the theorem, consider the mapping
\[
  \kappa \colon [0,T] \ni t\mapsto \|g(t,y(t)+\phi(0),\ev(\id\times r)(\pi_\beta (y+\phi)_{(t)}))\|.
\] By Proposition \ref{prop:pre-h-cont} and Remark \ref{rem:conthatp}, $\kappa$ is continuous. By assumption $\kappa(0)<\beta$. If $\kappa(t) < \beta$ for all $t\in [0,T)$; by the argument above, $\pi_\beta( (y+\phi)_{(t)})= (y+\phi)_{(t)}  $ and  $T=T_\beta$. If $\kappa(t) =\beta$ for some $t\in [0,T)$, we find $t_m \in [0,T)$ minimal such that $\kappa(t_m)=\beta$. In this case, using the differential equation satisfied by $y$, we infer $\|y'(t_m)\|=\beta$ and $\|y'(t)\|<\beta$ for all $t\in [0,t_m)$. Thus, $T_\beta=t_m$ has the properties of $T_\beta$ in case $T_\beta<T$.}
\end{proof}

Before we turn to a proof of Theorem \ref{thm:wp}, we establish uniqueness. 

\begin{lem}\label{lem:uniqueness} Let $\phi\in H^1(-h,0;\R^n)$ with bounded derivative. Then there exists at most one solution $x\in H^1(-h,T;\R^n)$ of \eqref{eq:sdd}.
\end{lem}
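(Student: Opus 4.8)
The plan is to argue by contradiction via a local-in-time uniqueness statement combined with a connectedness (continuation) argument; but since the exponentially weighted machinery is already in place, it is cleaner to run a single direct Gr\"onwall-type estimate on the \emph{difference} of two solutions. So suppose $x_1,x_2\in H^1(-h,T;\R^n)$ both solve \eqref{eq:sdd} for the same $\phi$. First I would record that, by Remark \ref{rem:sm}(e) (equivalently Lemma \ref{lem:bdd} together with the integrated equation), each $x_i$ is Lipschitz continuous on $[-h,T]$; hence there is a common bound $\beta$ with $\|x_i'\|_\infty\le\beta$, so that $(x_i)_{(t)}\in V_\beta$ for every $t\in[0,T]$ and $\pi_\beta$ acts as the identity on all the pre-histories that actually occur. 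This means that on $[0,T]$ both $x_i$ satisfy the fixed-point equation of Lemma \ref{lem:inteq} with $p=\id$, i.e. $x_i = y_i+\hat\phi$ with $y_i\in H^1_{0}(0,T;\R^n)$ and
\[
   y_i(t)=\int_0^t g\bigl(s,y_i(s)+\phi(0),\ev(\id\times r)((y_i+\hat\phi)_{(s)})\bigr)\d s.
\]

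Next I would estimate $y_1-y_2$ in the weighted norm exactly as in the proof of Theorem \ref{thm:wpsdd1}, the only difference being that there is no $\pi_\beta$ to invoke (it is the identity here) and that the Lipschitz estimate of Theorem \ref{thm:sddcm} is applied directly on $V_\beta$. Concretely, using Proposition \ref{prop:td}, (H1), Theorem \ref{thm:sddcm} (with the constant $c_\beta\coloneqq 2h^{1/2}+\beta\|r\|_{\textnormal{Lip}}+h^{-1/2}$), and Theorem \ref{thm:prehnorm}, one gets for every $\rho>0$
\[
   \|y_1-y_2\|_{H^1_\rho}\le L\Bigl(\tfrac1\rho+\tfrac1{\sqrt{2\rho}}c_\beta+1\Bigr)\,\tfrac{1}{\sqrt{2\rho}}\,c_\beta\|y_1-y_2\|_{H^1_\rho}
\]
(or any similar bound of the form $C(\beta)\rho^{-1/2}\|y_1-y_2\|_{H^1_\rho}$ after also using the Poincar\'e-type lemma $\|f\|_{L_{2,\rho}}\le\rho^{-1}\|f'\|_{L_{2,\rho}}$ on $H^1_{0,\rho}(0,T;\R^n)$). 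Since the prefactor tends to $0$ as $\rho\to\infty$, choosing $\rho$ large makes it $<1$, forcing $\|y_1-y_2\|_{H^1_\rho}=0$, hence $x_1=x_2$.

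The one genuine subtlety — and the step I would be most careful about — is the passage to a \emph{common} bound $\beta$ and the legitimacy of replacing $\pi_\beta$ by the identity: it must be argued that each solution is Lipschitz \emph{before} one knows uniqueness, which is why the a priori bound of Lemma \ref{lem:bdd} (bounding $x_i$, hence via the equation $x_i'=g(\cdot,x_i(\cdot),x_i(\cdot+r((x_i)_{(\cdot)})))$ bounding $x_i'$ in $L_\infty$) is the right tool and is available independently of the contraction argument. Everything else is a rerun of the estimates already carried out for $F$ in the proof of Theorem \ref{thm:wpsdd1}, so no new inequality is needed; the weighting parameter $\rho$ does all the work and, as in Morgenstern's proof of Picard--Lindel\"of, replaces the usual time-stepping/Gr\"onwall iteration.
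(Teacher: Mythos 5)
Your proposal is correct, but it takes a genuinely different route from the paper's proof of this lemma. Both arguments share the same essential first step: using Remark \ref{rem:sm}(e) (resp.\ Lemma \ref{lem:bdd} plus the equation) to establish \emph{a priori} that any solution is Lipschitz continuous, so that a common bound $\beta$ exists and all occurring pre-histories lie in $V_\beta$. After that the paper does \emph{not} return to the weighted contraction machinery: it sets $w=x^{(1)}-x^{(2)}$, notes $w=0$ on $[-h,0]$, and runs a direct sup-norm estimate
\[
  \sup_{\tau\in[-h,t]}\|w(\tau)\|\le\int_0^t L(1+\beta)\sup_{\tau\in[-h,s]}\|w(\tau)\|\d s,
\]
concluding by Gr\"onwall's inequality; this is short, pointwise, and needs only (H1), the common Lipschitz bound and the Lipschitz continuity of $r$. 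You instead observe that both solutions are fixed points of the map from Lemma \ref{lem:inteq} with $p=\id$ (equivalently, of $F$ from Theorem \ref{thm:wpsdd1} with $\pi_\beta$ acting as the identity on the relevant pre-histories) and rerun the $H^1_\rho$-contraction estimate, letting the weight $\rho$ do the work. This is a valid and complete argument --- indeed it is precisely the uniqueness argument the paper itself uses later in the proof of Theorem \ref{thm:wpfde} --- and it buys uniformity with the existence proof at the cost of re-invoking Theorems \ref{thm:sddcm}, \ref{thm:prehnorm} and Proposition \ref{prop:td}; the one point to state carefully (which you do, via the common $\beta$) is that both solutions must be fixed points of the \emph{same} strict contraction. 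Your displayed constant is not quite in the right form, but your fallback bound $C(\beta)\rho^{-1/2}\|y_1-y_2\|_{H^1_\rho}$ is what the estimates actually give, and it suffices.
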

\begin{proof}
 Let $x^{(1)},x^{(2)}\in H^{1}(-h,T;\R^n)$ satisfy \eqref{eq:sdd}. By Remark \ref{rem:sm} (e), both $x^{(1)},x^{(2)}$ are Lipschitz continuous. Let $\beta>0$ be a common Lipschitz constant. {Then, for $j\in \{1,2\}$,
 \[
  (x^{(j)})' = g(\cdot,x^{(j)}(\cdot), \ev(\id\times r)((x^{(j)}_{(\cdot)}))))=g(\cdot,x^{(j)}(\cdot), \ev(\id\times r)(\pi_\beta(x^{(j)}_{(\cdot)}))))\text{ in }L_2(0,T;\R^n)
\]
  and $x^{(j)}_0=\phi$. By Lemma \ref{lem:inteq}, putting $y^{(j)} \coloneqq \hat{\phi} - x^{(j)}$ for $j\in \{1,2\}$, we get
 $y^{(j)}(t)=0$ on $(-h,0)$ and for $t\in (0,T)$
  \[
     y^{(j)}(t) = \int_0^t g(s,y^{(j)}(s)+\phi(0), \ev(\id\times r)(\pi_\beta ((y^{(j)}+\hat{\phi})_{(s)}))) \d s
  \] and $y^{(j)}\in H^1(-h,T;\R^n)$. By Theorem \ref{thm:wpsdd1} fixed points for this integral equation are unique in $ H^1(-h,T;\R^n)$ and so $y^{(1)}=y^{(2)}$ or, equivalently, $x^{(1)}=x^{(2)}$, as claimed.}
\end{proof}

The preparations are complete for the main result of the present paper.

\begin{proof}[Proof of Theorem \ref{thm:wp}] 
Uniqueness has been established in Lemma \ref{lem:uniqueness}. By assumption, we find $\beta_0>0$ such that $\phi\in V_{\beta_0}$ and $\|g(0,\phi(0),\phi(r(\phi)))\|\leq\beta_0$. Let $\beta>\beta_0$. Then, by Theorems \ref{thm:wpsdd1} and \ref{thm:localsolth}, there exists $0<T_\beta\leq T$ and a unique solution $x$ on $(0,T_\beta)$, where $T_\beta$ is as in the concluding statement in Theorem \ref{thm:localsolth}; that is, such that either $T_\beta=T$ or for all $t<T_\beta$, $\|x'(t)\|<\beta$ and $\|x'(T_\beta)\|=\beta$. 
{If $T=T_\beta$, we are done. If $T_\beta<T$, since $\phi\in V_{2\beta}$ by Theorems \ref{thm:wpsdd1} and \ref{thm:localsolth}, there exists $0<T_{2\beta}\leq T$ and a unique solution $\tilde{x}$ on $(0,T_{2\beta})$ with either $T=T_{2\beta}$ or $\|\tilde{x}'(T_{2\beta}\|=2\beta$. Since $\|x'(t)\|<\beta\leq 2\beta$ for all $t\in (0,T_\beta)$ and the solution being unique particularly on $(0,T_\beta)$, it follows that $\tilde{x}=x$ on $(0,T_\beta)$. We drop $\tilde{\phantom{x}}$ again and refer to the solution on $(0,T_{2\beta})$ by $x$ instead. Since $\|x'(T_{\beta})\|=\beta <\|x'(T_{2\beta})\|=2\beta$ and $\|x'(t)\|<\beta$ for all $t\in [0,T_\beta)$ it follows that $T_{2\beta}>T_\beta$.} We obtain a sequence $(T_{k\beta})_{k\in \N}$ of strictly positive numbers, which is increasing and either eventually constant $T$; or for all $k\in \N$, $k\geq 1$, we have $T_{k\beta}<T$. In the former case we are done. Next, we assume, by contradiction, that the latter case holds { and define $T_\infty\coloneqq \lim_{k\to\infty} T_{k\beta}\leq T$}. This implies that 
\begin{equation}\label{eq:gtinf}\|g(T_{k\beta},x(T_{k\beta}),x(T_{k\beta}+r(x_{(T_{k\beta})})))\|=\|x'(T_{k\beta})\|=k\beta \to \infty \quad (k\to\infty).
\end{equation}
It follows that $\sup_{t\in [-h,T_\infty]}\|x(t)\|=\infty$. {Indeed, if, by contradiction, $\sup_{t\in [-h,T_\infty)}\|x(t)\|<\infty$, then, as $g$ is bounded on bounded sets by continuity of $g$, we get a contradiction to \eqref{eq:gtinf}}. Lemma \ref{lem:bdd}, however, implies $\sup_{t\in [-h,T_\infty)}\|x(t)\|<\infty$, which is a contradiction. Hence, $(T_{k\beta})_{k\in \N}$ is eventually constant equal to $T$ and the claim follows.
\end{proof}

We conclude this section with a permanence principle: Given a solution $x$ of \eqref{eq:sdd}, for every $\varepsilon>0$ there exists a time $0<T_\varepsilon\leq T$ such that both the pre-history of $x$ and $x'$ do not deviate from $\phi$ and $g(0,\phi(0),\phi(r(\phi)))$ more than $\varepsilon$, respectively. 

\begin{thm}[Permanence principle]\label{thm:pp} Let $\phi\in H^1(-h,0;\R^n)$ with bounded derivative and let $x\in H^1(-h,T;\R^n)$ be a solution of \eqref{eq:sdd}. Let $\varepsilon>0$. Then there exists $T_\varepsilon>0$ such that for all $0\leq t\leq T_\varepsilon$
\[
   \sup_{s\in [-h,0]} (\|x(s+t)-\phi(s)\|_{\R^n})+\|x'(t)-g(0,\phi(0),\phi(r(\phi)))\|_{\R^n}\leq \varepsilon.
\]
\end{thm}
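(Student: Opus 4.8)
The statement is essentially a continuity-at-zero assertion combined with the already-established representation of $x'$. First I would invoke Remark \ref{rem:sm}(e) (or the proof of Lemma \ref{lem:uniqueness}) to record that $x$ is Lipschitz continuous, so in particular $x\in H^1(-h,T;\R^n)$ has bounded derivative; fix a Lipschitz constant $\beta_0$ for $x$ and choose $\beta>\beta_0$ with $\phi\in V_\beta$ and $\|g(0,\phi(0),\phi(r(\phi)))\|<\beta$ as in the proof of Theorem \ref{thm:wp}. By Lemma \ref{lem:inteq} applied with $p=\pi_\beta$ (valid on a maximal interval by the argument in the proof of Theorem \ref{thm:wp}), on the relevant time-horizon we have the pointwise identity $x'(t)=g(t,x(t),\ev(\id\times r)(\pi_\beta(x_{(t)})))$, which in turn equals $g(t,x(t),\ev(\id\times r)(x_{(t)}))$ once $t$ is small enough that $\pi_\beta$ acts as the identity on $x_{(t)}$ (exactly the mechanism in the proof of Theorem \ref{thm:localsolth}).

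\textbf{Key steps.} The plan is then to handle the two terms of the supremum separately. For the $x$-term: since $x$ is continuous on the compact interval $[-h,T]$ it is uniformly continuous, and $x(s+t)-\phi(s)=x(s+t)-x(s)$ for $s\in[-h,0]$ by the initial condition $x_{(0)}=\phi$; uniform continuity of $x$ gives a modulus $\delta_1>0$ with $\sup_{s\in[-h,0]}\|x(s+t)-x(s)\|<\varepsilon/2$ for $0\le t\le\delta_1$. (Alternatively one may quote $t\mapsto x_{(t)}\in C([0,T];L_2(-h,0))$ from Proposition \ref{prop:pre-h-cont} together with the Sobolev embedding of Theorem \ref{thm:set}, but the direct uniform-continuity argument is cleaner in the sup-norm.) For the $x'$-term: by Proposition \ref{prop:pre-h-cont} and Remark \ref{rem:conthatp}, $t\mapsto x_{(t)}$ is continuous into $H^1(-h,0;\R^n)$, and $x_{(0)}=\phi\in V_\beta$, so there is $\delta_2>0$ with $\|x_{(t)}-\phi\|_{H^1}$ small for $0\le t\le\delta_2$; on this interval $\pi_\beta$ fixes $x_{(t)}$, and combining continuity of $g$, continuity of $\ev\circ(\id\times r)$ on $H^1$ (Theorem \ref{thm:sddcm} or merely its Lipschitz bound on $V_\beta$), and continuity of $t\mapsto x(t)$, we get $\|x'(t)-g(0,\phi(0),\phi(r(\phi)))\|<\varepsilon/2$ for $t$ small. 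Setting $T_\beta\coloneqq\min\{\delta_1,\delta_2,T_\beta^{\mathrm{loc}}\}$ with $T_\beta^{\mathrm{loc}}$ the time from Theorem \ref{thm:localsolth} on which $\pi_\beta$ is the identity, the triangle inequality delivers the claimed bound for all $0\le t\le T_\beta$.

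\textbf{Main obstacle.} There is no deep difficulty here; the only point requiring care is the bookkeeping that on $[0,T_\beta]$ the projection $\pi_\beta$ genuinely acts as the identity on $x_{(t)}$, so that the formula $x'(t)=g(t,x(t),\ev(\id\times r)(x_{(t)}))$ is legitimate and the delay functional $r$ is evaluated where it is actually Lipschitz — this is precisely the content of Theorem \ref{thm:localsolth} and must be threaded through rather than assumed. Everything else is a routine $\varepsilon/2$ splitting using the continuity statements from Section \ref{sec:fap} and the Lipschitz estimate of Theorem \ref{thm:sddcm}.
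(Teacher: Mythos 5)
Your proof is essentially correct, but it differs from the paper's in two respects worth noting. First, the paper does not touch the projection $\pi_\beta$ at all: since the hypothesis is that $x\in H^1(-h,T;\R^n)$ solves \eqref{eq:sdd} under (H1)--(H2), where $r$ is globally Lipschitz on $H^1(-h,0;\R^n)$, the pointwise identity $x'(t)=g(t,x(t),x(t+r(x_{(t)})))$ is available by definition (plus Lemma \ref{lem:inteq} with $p=\id$ and continuity of the integrand); your detour through Lemma \ref{lem:inteq} with $p=\pi_\beta$ and Theorem \ref{thm:localsolth} is harmless --- indeed $x_{(t)}\in V_\beta$ for all $t$ once $\beta$ exceeds the Lipschitz constant of $x$, so $\pi_\beta$ is the identity on the whole trajectory --- but it is not needed, and your ``main obstacle'' is a non-issue in this setting. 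Second, for the $x'$-term the paper argues entirely in the sup-norm: it uses uniform continuity of $g$ on $[0,T]\times K\times K$ with $K=x[[-h,T]]$ compact, uniform continuity of $x$, and continuity of $t\mapsto r(x_{(t)})$ (via Proposition \ref{prop:pre-h-cont} and Remark \ref{rem:conthatp}) to bound $|t|+\|x(t)-\phi(0)\|+\|x(t+r(x_{(t)}))-\phi(r(\phi))\|\leq\delta_0$ directly; you instead route the convergence $x(t+r(x_{(t)}))\to\phi(r(\phi))$ through the $H^1$-continuity of the shift combined with the Lipschitz estimate of Theorem \ref{thm:sddcm} on $V_\beta$, and then need only pointwise continuity of $g$ at $(0,\phi(0),\phi(r(\phi)))$. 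Both mechanisms are valid; the paper's keeps everything in $C$-norms and avoids Theorem \ref{thm:sddcm}, while yours leans on the Hilbert-space estimates already established and is marginally more economical on the continuity hypothesis for $g$. The treatment of the first term (uniform continuity of $x$ on $[-h,T]$) coincides with the paper's.
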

\begin{proof}
The solution $x\in H^1(-h,T;\R^n)$ agrees with $\phi$ on $[-h,0]$. Since $x$ is continuous by Theorem \ref{thm:set}, it is uniformly continuous and bounded on the compact set $[-h,T]$. {Since} $g$ is uniformly continuous on $[0,T]\times K\times K$ with $K=x[[-h,T]]$ we find $\delta_0>0$ such that for all $t,s\in [0,T], u,v,w,z\in K$ with $|t-s|+\|u-v\|+\|w-z\|\leq \delta_0$, we infer
\[
    |g(t,u,w)-g(s,v,z)|\leq \varepsilon/2.
\]
By uniform continuity of $x$, we find $\delta_1>0$ such that $\|x(s+t)-\phi(s)\|\leq \min\{\varepsilon/2,\delta_0/3\}$ for all $s\in [-h,0]$ as long as $0\leq t\leq \delta_1$. By continuity of the shift (see Proposition \ref{prop:pre-h-cont} and Remark \ref{rem:conthatp}) and continuity of $r$, we find $\delta_2>0$ such that $\|r(x(t+\cdot))-r(\phi)\|\leq \delta_1/2$ for all $0\leq t\leq\delta_2$. 

Thus, for $T_\varepsilon\coloneqq \min\{\delta_0/3,\delta_2,\delta_1/2\}$ and $0< t\leq T_\varepsilon$ we estimate
\[
    \|x(s+t)-\phi(s)\|\leq \varepsilon/2\quad(s\in [-h,0])
\]
and
\begin{align*}
   \|x'(t)-g(0,\phi(0),\phi(r(\phi)))\|&=\|g(t,x(t),x(t+r(x_{t})))-g(0,\phi(0),\phi(r(\phi)))\|\leq \varepsilon/2,
\end{align*}as 
\begin{multline*}
|t|+\|x(t)-\phi(0)\|+\|x(t+r(x_{t}))-\phi(r(\phi))\|\\ \leq \tfrac{\delta_0}{3}+\tfrac{\delta_0}{3}+\|x(t+r(x_{t})-r(\phi)+r(\phi))-\phi(r(\phi))\|\leq \delta_0,
\end{multline*}
where we used that $r(\phi)\in [-h,0]$ and $|t+r(x_{t})-r(\phi)|\leq\delta_1$.
\end{proof}

Before we come to some {concrete examples} we shall consider the extension of the techniques developed here to functional differential equations.

\section{Solution theory for functional differential equations}\label{sec:fde}

An adaptation of the techniques carried out in the proof of Theorem \ref{thm:wp} will show the following more general theorem for functional differential equations (FDEs). The right-hand side of the differential equation being more general as opposed to \eqref{eq:sdd}, we { cannot guarantee existence of solutions throughout the entire given time-horizon, $(-h,T)$, where} again, we let $h,T>0$ throughout this section. We say that $G\colon [0,T]\times H^1(-h,0;\R^n)\to \R^m$ is \textbf{almost uniformly Lipschitz continuous}, if  it is continuous and if for all $\beta>0$, there exists $L\geq 0$ such that for all $u,v\in V_\beta$ and $t\in [0,T]$ 
\[
   \|G(t,u)-G(t,v)\|_{\R^m}\leq L\|u-v\|_{H^1(-h,0;\R^n)}
\] 
Here, we shall {consider and prove} Theorem \ref{thm:fdewpint} from the introduction and present the following more detailed version of it.

\begin{thm}[Picard--Lindel\"of for FDEs]\label{thm:wpfde} Let $G\colon [0,T]\times H^1(-h,0;\R^n)\to \R^n$ be almost uniformly Lipschitz continuous. Then for all $\phi\in H^1(-h,0;\R^n)$ with bounded derivative, there exists $0<T_0\leq T$ and { a unique } $x\colon (-h,T_0)\to \R^n$ such that for all $T_1\in (0,T_0)$,  $x|_{(-h,T_1)}\in H^1(-h,T_1;\R^n)$ satisfies
\begin{align*}\tag{FDE}
    x'(t)=G(t,x_{t})\quad (t\in (0,T_1)),\quad x_0=\phi.
\end{align*}
If $T_0$ is chosen maximally, then either $\|x\|_{\textnormal{Lip}}=\infty$ or both $T_0=T$ and $x\in H^1(-h,T;\R^n)$. 
\end{thm}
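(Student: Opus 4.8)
\textbf{Proof plan for Theorem \ref{thm:wpfde}.}

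The plan is to mirror the proof of Theorem \ref{thm:wp} step by step, replacing the composed right-hand side $g\circ\ev\circ(\id\times r)$ and the Lipschitz estimate of Theorem \ref{thm:sddcm} by the almost uniform Lipschitz hypothesis on $G$ directly. First I would fix $\beta>0$ and define, exactly as for \eqref{eq:sdd}, the projected problem: let $\pi_\beta=P_{V_\beta}$ and consider, on $H^1_{0,\rho}(0,T;\R^n)$, the map
\[
   Fy(t)\coloneqq \int_0^t G\bigl(s,\pi_\beta((y+\hat\phi)_{(s)})\bigr)\d s .
\]
Well-definedness and continuity of the integrand follow from Proposition \ref{prop:pre-h-cont}, Remark \ref{rem:conthatp}, continuity of $\pi_\beta$, and continuity of $G$, while $G$ restricted to $V_\beta$ is $L$-Lipschitz in its second argument (with $L=L(\beta)$). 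As in the proof of Theorem \ref{thm:wpsdd1}, I would estimate $\|Fu-Fv\|_{L_{2,\rho}}$ using Proposition \ref{prop:td}, then the Lipschitz bound for $G$ on $V_\beta$, then $\|\pi_\beta\|_{\textnormal{Lip}}\leq1$, then $\hat\Theta$ together with Theorem \ref{thm:prehnorm}, obtaining a bound of the form $\tfrac{L}{\rho}\bigl(\tfrac{1}{\sqrt{2\rho}}\|u-v\|_{H^1_\rho}\bigr)$; similarly for the derivatives $(Fu)'-(Fv)'$ one gets $\tfrac{L}{\sqrt{2\rho}}\|u-v\|_{H^1_\rho}$. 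Hence for $\rho$ large $F$ is a strict contraction and the projected problem has a unique fixed point $y_\beta\in H^1(-h,T;\R^n)$.

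Next I would remove the projection on a small time interval exactly as in Theorem \ref{thm:localsolth}: if $\phi\in V_\beta$ and $\|G(0,\phi)\|<\beta$, then since $(y_\beta+\hat\phi)_{(t)}\to\phi$ in $H^1(-h,0;\R^n)$ as $t\to0+$ (Proposition \ref{prop:pre-h-cont}, Remark \ref{rem:conthatp}) and $G$ is continuous, $\|y_\beta'(t)\|=\|G(t,\pi_\beta((y_\beta+\hat\phi)_{(t)}))\|\to\|G(0,\phi)\|<\beta$, so there is $0<T_\beta\le T$ with $\sup_{t\in(0,T_\beta)}\|y_\beta'(t)\|<\beta$; on $(-h,T_\beta)$ we have $\pi_\beta((y_\beta+\hat\phi)_{(t)})=(y_\beta+\hat\phi)_{(t)}$, so $x\coloneqq y_\beta+\hat\phi$ solves (FDE) on $(0,T_\beta)$. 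Uniqueness of the $H^1$-solution on any subinterval is obtained by the Grönwall argument of Lemma \ref{lem:uniqueness}: two solutions are Lipschitz (their derivatives $G(\cdot,x_{(\cdot)})$ are continuous, hence bounded, on a compact interval, and the pre-history is Lipschitz), say with common bound $\beta'$; then on $[0,t]$ one bounds $\sup_{[-h,s]}\|w\|$ by $\int_0^t L(\beta')\|w_{(s)}\|_{H^1}\d s$, where $\|w_{(s)}\|_{H^1}$ is controlled by $\sup_{[-h,s]}\|w\|$ plus $\sup_{[-h,s]}\|w'\|$, and the latter is in turn estimated through $G$'s Lipschitz bound and the shift; Grönwall gives $w=0$.

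To extract the maximal interval, fix any $\beta>\beta_0$ where $\phi\in V_{\beta_0}$ and $\|G(0,\phi)\|\le\beta_0$, and define $T_{k\beta}$ as the supremum of times up to which the constructed solution satisfies $\|x'(t)\|\le k\beta$; by continuity of $t\mapsto x'(t)=G(t,x_{(t)})$ and by uniqueness these $T_{k\beta}$ are increasing. Set $T_0\coloneqq\sup_k T_{k\beta}$ and let $x$ on $(-h,T_0)$ be the common extension. If $T_0<T$, then along $T_{k\beta}$ either $\|x'(T_{k\beta})\|=k\beta\to\infty$, so $\|x\|_{\textnormal{Lip}}=\infty$; moreover in that case $x\notin H^1(-h,T_0;\R^n)$, since an $H^1$-function is continuous (Theorem \ref{thm:set}) hence bounded on $[-h,T_0]$, and then $G$ is bounded on the compact set $[0,T_0]\times\overline{\{x_{(t)}:t\in[0,T_0)\}}$ — here one should note $\{x_{(t)}\}$ is relatively compact in $H^1(-h,0;\R^n)$ by continuity of $\hat\Theta$ into $C([0,T_0];H^1)$ — contradicting $\|x'(t)\|\to\infty$. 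If $T_0=T$, then either the sequence $T_{k\beta}$ is eventually $T$, in which case $\|x'\|$ is bounded and $x\in H^1(-h,T;\R^n)$, or $T_{k\beta}\to T$ with $\|x'(T_{k\beta})\|\to\infty$, i.e.\ $\|x\|_{\textnormal{Lip}}=\infty$. This yields the stated dichotomy.

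The main obstacle is the blow-up/maximality step: for \eqref{eq:sdd} one could invoke Lemma \ref{lem:bdd} (a Grönwall a priori bound) to rule out finite-time blow-up outright, whereas here $G$ is only almost uniformly Lipschitz and no such global a priori bound is available — so one genuinely only gets the dichotomy, and care is needed to argue that the alternative ``$T_0<T$ with $x$ bounded'' cannot occur, which hinges on the relative compactness of the pre-history orbit $\{x_{(t)}:t\in[0,T_0)\}$ in $H^1(-h,0;\R^n)$ (to make $G$ bounded there) together with continuity of $G$.
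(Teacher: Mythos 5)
Your plan follows the paper's proof almost verbatim in its main steps: the projected contraction $\Phi_\beta$ on $H^1_{0,\rho}$, the removal of $\pi_\beta$ on a short interval as in Theorem \ref{thm:localsolth}, and the sequence $T_{k\beta}$ yielding the dichotomy ``$\|x\|_{\textnormal{Lip}}=\infty$ or $T_0=T$ with $x\in H^1$''. The preliminary observation that every $H^1$-solution is Lipschitz (compactness of the orbit $t\mapsto x_{(t)}$ in $H^1(-h,0;\R^n)$, hence boundedness of $G$ on it) is also exactly the paper's first step. The one place you genuinely diverge is uniqueness, and there your sketch has a soft spot. You propose a Gr\"onwall argument as in Lemma \ref{lem:uniqueness}, but for (FDE) the Lipschitz bound on $G$ is with respect to the $H^1$-norm of $w_{(s)}$, which contains $\sup\|w'\|$; your estimate for $\sup\|w'\|$ is again $L\|w_{(s)}\|_{H^1}$, so the inequality has the quantity to be controlled on both sides and Gr\"onwall does not apply as written. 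This is fixable — since $w$ and $w'$ vanish on $(-h,0]$ one has $\|w_{(s)}\|_{H^1}^2\leq (1+s^2)\int_0^s\|w'(\sigma)\|^2\d\sigma$, so $u(s)\coloneqq\|w'(s)\|^2$ satisfies $u(s)\leq L^2(1+s^2)\int_0^s u$ and the integral form of Gr\"onwall gives $u=0$ — but you should make this absorption explicit. The paper avoids the issue entirely: any competing solution $\tilde x$ is Lipschitz, hence lies in $V_{\tilde\beta}$ for some $\tilde\beta>\beta$, so $\tilde x-\hat\phi$ is a fixed point of the (restricted) contraction $\Phi_{\tilde\beta}$, and uniqueness of fixed points does the work; this reuses machinery already in place and is the cleaner route here.

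One further remark: your closing paragraph tries to exclude the case ``$T_0<T$ with $x$ bounded'', but the theorem does not claim this — if $T_0<T$ then $\|x'(T_{k\beta})\|=k\beta\to\infty$ already gives $\|x\|_{\textnormal{Lip}}=\infty$, which is one horn of the stated dichotomy, and nothing more is asserted (a bounded solution with unbounded derivative is not excluded by the statement). Moreover, the compactness argument you invoke there presupposes $x\in H^1(-h,T_0;\R^n)$, i.e.\ precisely what is in question, so this extra claim should be dropped rather than repaired.
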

\begin{proof}
{In order to show that there exists $0<T_0\leq T$ and $x$ with the mentioned properties, we let $\beta>0$.} Then
\[
  \Phi_\beta\colon H_{0,\rho}^1(0,T;\R^n)\to H_{0,\rho}^1(0,T;\R^n)
\]
defined by
\begin{equation}\label{eq:phib}
    (\Phi_\beta y)(t) = \int_0^{\max\{t,0\}} G(s, \pi_\beta (y+\hat{\phi})_{s})\d s,
\end{equation}
where $t\in (-h,T)$, is Lipschitz continuous and a strict contraction if $\rho>0$ is large enough, where again $\pi_\beta$ is the {Lipschitz} continuous projection onto $V_\beta$. {For this, note for $y\in H_{0,\rho}^1(0,T;\R^n)$ that $[0,T]\ni s\mapsto G(s, \pi_\beta (y+\hat{\phi})_{(s)})$ is continuous by Remark \ref{rem:conthatp}, the continuity of $\pi_\beta$ and continuity of $G$. This leads to the well-definedness of the integral. Moreover, by the fundamental theorem of calculus, $\Phi_\beta y$ is continuously differentiable on $[0,T]$ with $\Phi_\beta y(0)=0$ and $\Phi_\beta y =0$ on $(-h,0)$. {Hence, using the well-known integration by parts formula for $H^1$, it follows that $\Phi_\beta y\in H_{0,\rho}^1(0,T;\R^n)$: Indeed, let $\psi \in C_c^\infty(-h,T)$. We compute
\begin{align*}
  -\int_{-h}^T \psi'(s) \Phi_\beta y(s)\d s &=  -\int_{0}^T \psi'(s) \Phi_\beta y(s)\d s \\ & = -\psi'(0) \Phi_\beta y(0) + \int_{0}^T \psi(s) (\Phi_\beta y)'(s)\d s =  \int_{0}^T \psi(s) (\Phi_\beta y)'(s)\d s,
\end{align*}
which shows $\Phi_\beta y\in H_{0,\rho}^1(0,T;\R^n)$.} Hence, $\Phi_\beta$ is well-defined. Moreover, for $y,z\in H_{0,\rho}^1(0,T;\R^n)$ we compute using Lemma \ref{lem:intbdd}
\begin{align*}
  \|\Phi_\beta y-\Phi_\beta z\|_{L_{2,\rho}(-h,T)}^2 \leq  \frac{1}{\rho^2} \|(\Phi_\beta y)'-(\Phi_\beta z)'\|_{L_{2,\rho}(0,T)}^2.
\end{align*}
Moreover, we find $L\geq 0$ depending on $\beta$, such that
\begin{align*}
\|(\Phi_\beta y)'-(\Phi_\beta z)'\|_{L_{2,\rho}(0,T)}^2 & = \int_0^T \|G(s, \pi_\beta (y+\hat{\phi})_{s})-G(s, \pi_\beta (z+\hat{\phi})_{s})\|^2 \e^{-2\rho s} \d s  \\
&\leq  L^2 \int_0^T \|\pi_\beta (y+\hat{\phi})_{s}- \pi_\beta (z+\hat{\phi})_{s}\|_{H^1(-h,0;\R^n)}^2 \e^{-2\rho s}\d s \\
& \leq  L^2 \int_0^T \| (y+\hat{\phi})_{s}-  (z+\hat{\phi})_{s}\|_{H^1(-h,0;\R^n)}^2 \e^{-2\rho s}\d s \\
 & =  L^2 \int_0^T \| y_{s}-  z_{s}\|_{H^1(-h,0;\R^n)}^2 \e^{-2\rho s}\d s \\
  & =  \frac{1}{2\rho} L^2 \| y-  z\|_{H_{0,\rho}^1(-h,T;\R^n)}^2,
\end{align*}
where in the last step we used Remark \ref{rem:thetaH}. Thus, for $\rho>0$ large enough, $\Phi_\beta$ has a unique fixed point $y^{(\beta)} \in H_{0,\rho}^1(0,T;\R^n)=H_{0,0}^1(0,T;\R^n)\subseteq H^1(-h,T;\R^n)$.
Next, we let $\beta>\max\{\|\phi\|_{\textnormal{Lip}},G(0,\phi)\}$. Then, as argued as in Remark \ref{rem:thetaH}, we get
\[
   (( y^{(\beta)}+\hat{\phi})_s)' =    ( (y^{(\beta)})'+\hat{\phi}')_s\quad(s\in [0,T])
\]
and, since $y^{(\beta)}$ is the fixed point of $\Phi_\beta$,
\[
(y^{(\beta)})'(t)=G(t, \pi_\beta (y^{(\beta)}+\hat{\phi})_{t})\quad(t\in (0,T)).
\]
By $\|G(0,\pi_\beta (y+\hat{\phi})_{0})\| =\|G(0,\pi_\beta \phi)\|=\|G(0,\phi)\|<\beta$ and the continuity of $[0,T]\ni t\mapsto G(t, \pi_\beta (y+\hat{\phi})_{t})$ it follows that $(y^{(\beta)}+\hat{\phi})_s \in V_\beta$ for all $s\in [0,T_\beta]$ with $0<T_\beta\leq T$ given by
\begin{equation}\label{eq:tb}
   T_\beta \coloneqq \begin{cases} T, \quad \quad \text{if, for all $t\in [0,T)$, }\|G(t, \pi_\beta (y^{(\beta)}+\hat{\phi})_{t})\|<\beta;\\
   \min\{ t\in [0,T); \|G(t, \pi_\beta (y^{(\beta)}+\hat{\phi})_{t})\|=\beta\},  \quad \quad \text{ otherwise}.   
   \end{cases}
\end{equation} Thus,
\[
   (y^{(\beta)})'(t)=G(t, \pi_\beta (y^{(\beta)}+\hat{\phi})_{t})=G(t, (y^{(\beta)}+\hat{\phi})_{t})\quad (t\in (0,T_\beta))
\]
and the existence part is proven with $T_0=T_\beta$ and $x=(y^{(\beta)}+\hat{\phi})|_{(-h,T_\beta)}$. {In particular, for all $T_1<T_0$, $x|_{(-h,T_1)}$ satisfies (FDE) and the initial condtion.} \newline
For uniqueness, let {$0<T_0\leq T$ and $w\colon (-h,T_0)\to \R^n$ be such that $w|_{(-h,T_1)}\in  H^1(-h,T_1;\R^n)$ satisfies the (FDE) and the initial condition for all $0<T_1<T_0$. Let $T_1<T_0$ and define $x\coloneqq w|_{(-h,T_1)}$.} Then $x$ is Lipschitz continuous. Indeed, since $\phi$ is Lipschitz, $x$ is Lipschitz if we show $x$ has a bounded derivative on $(0,T_1)$, see Remark \ref{rem:sm}(d). By Remark \ref{rem:conthatp}, the mapping $[0,T_1]\ni t\mapsto x_{t}\in H^1(-h,0;\R^n)$ is continuous. Hence, the image, $X$, of said mapping is a compact subset of $H^1(-h,0;\R^n)$. Since $G$ is continuous, $G[[0,T_1]\times X]\subseteq \R^n$ is compact and hence bounded. Thus, $\|x'(t)\|\leq \max G[[0,T_1]\times X]<\infty$ for all $t\in [0,T_1]$. Moreover, note that $y =x-\hat{\phi} \in H_{0,0}^1(0,T_1;\R^n)$ (recall \eqref{eq:h100}) satisfies the integrated version of (FDE), that is,
\[
   (\Phi y)(t) = \int_0^{\max\{t,0\}} G(s,  (y+\hat{\phi})_{s})\d s\quad (t\in [0,T_1]).
\]Since $x$ (and, hence, $y$) is Lipschitz continuous, we find $\beta>0$ such that $x =y+\hat{\phi} \in V_{\beta/2 }$ and $y$ is the fixed point of $\Phi_{\beta}$ defined as in \eqref{eq:phib} for $t\in (-h,T_1)$. \newline
If, now, $\tilde{x}\in H^1(-h,T_1;\R^n)$ satisfies (FDE) and the initial conditions, by the same arguments as for $x$, we find $\tilde{y} = \tilde{x}-\hat{\phi} \in H_{0,0}^1(0,T_1;\R^n)$ being the fixed point of $\Phi_{\tilde{\beta}}$ for some $\tilde{\beta}>0$. Without loss of generality, $\beta\geq \tilde{\beta}$. Since 
\begin{align*}
  \tilde{y} & = \Phi_{\tilde\beta}  \tilde{y} \\
  & =\int_0^{\max\{\cdot,0\}} G(s, \pi_{\tilde\beta} (\tilde{y}+\hat{\phi})_{s})\d s\\
  & =\int_0^{\max\{\cdot,0\}} G(s,  (\tilde{y}+\hat{\phi})_{s})\d s \\
  & =\int_0^{\max\{\cdot,0\}} G(s, \pi_{\beta} (\tilde{y}+\hat{\phi})_{s})\d s  = \Phi_{\beta}\tilde{y} ,
\end{align*}
and by uniqueness of the fixed point of $\Phi_\beta$, which is implied by the contraction mapping principle, we get $\tilde{y}={y}$ and, hence, $\tilde{x}=x$. {Hence, any solution on $(-h,T_1)$ coincides with the restriction of $w$ to $(-h,T_1)$ and, thus, uniqueness follows.}\newline
Finally, recall the definition of $T_\beta$ in \eqref{eq:tb} with $x^{(\beta)} \coloneqq y^{(\beta)}+\hat{\phi} \in H^1(-h,T;\R^n)$ where $y^{(\beta)} \in H^1(-h,T;\R^n)$ is the fixed point of $\Phi_\beta$ given as in \eqref{eq:phib} for $t\in (-h,T)$. {By the definition of $T_\beta$ it follows that $(x^{(\beta)})_t \in V^\beta$ for all $0\leq t\leq T_\beta$.}  Consider the sequence $(T_{k\beta})_{k\in \N}$. Next we show that $(T_{k\beta})_{k\in \N}$ is increasing. 
Indeed, let $k_1,k_2\in \N$ with $1\leq k_1<k_2$. By contradiction, assume $T_{k_2\beta}< T_{k_1\beta}$. By the uniqueness part, $x^{(k_1\beta)}=x^{(k_2\beta)}$ on $(-h,T_{k_2\beta})$.  By construction, $ \|G(T_{k_2\beta}, \pi_{k_2\beta} (x^{(k_1\beta)})_{(T_{k_2\beta})})\|=k_2\beta$ with $T_{k_2\beta}$ minimal. The intermediate value theorem implies due to continuity of $t\mapsto \|G(t, \pi_{k_2\beta} (x^{(k_1\beta)})_{t})\|$ that we find a minimal $0<T^*<T_{k_2\beta}$ such that 
\[
\|G(T^*, \pi_{k_2\beta} (x^{(k_1\beta)})_{T^*})\|=k_1\beta.
\]{
Since $(x^{(k_1\beta)})_{t} \in V_{k_1\beta}$ for all $0\leq t<T_{k_1\beta}$ and $V_{k_1\beta}\subseteq V_{k_2\beta}$, we deduce 
\[\pi_{k_2\beta} (x^{(k_1\beta)})_{t}=\pi_{k_2\beta} \pi_{k_1\beta} (x^{(k_1\beta)})_{t}) =\pi_{k_1\beta} (x^{(k_1\beta)})_{t}). 
\]Hence, as $0<T^*<T_{k_2\beta}<T_{k_1\beta}$,
\[k_1\beta=\|G(T^*, \pi_{k_2\beta} (x^{(k_1\beta)})_{T^*})\|=\|G(T^*, \pi_{k_1\beta} (x^{(k_1\beta)})_{T^*})\|.
\]
Since $T^*<T_{k_1\beta}$ and $T_{k_1\beta}$ is minimal with $k_1\beta=\|G(T^*, \pi_{k_1\beta} (x^{(k_1\beta)})_{T^*})\|$ we obtain a contradiction.}\newline
Thus, $(T_{k\beta})_{k\in \N}$ is either eventually constant equal $T$, in which case $x^{(k\beta)}\in H^1(-h,T;\R^n)$ for some $k\in \N$ and $x=x^{(k\beta)}$ satisfies (FDE) on $(0,T)$ and the second alternative holds for $T_0=T$. Otherwise, $T_{k\beta}<T$ for all $k\in \N$. By the uniqueness part we may set $x\coloneqq x^{(\beta)}$ on $[0,T_\beta)$ and
\[
    x \coloneqq x^{(k+1)\beta}\text{ on }[T_{k\beta},T_{(k+1)\beta})\text{ for all }k\in \N, k\geq 1.
\]  Defining $T_0\coloneqq \lim_{k\to\infty}T_{k\beta}\leq T$, we obtain $x$ as claimed to exist in the theorem. Moreover, it follows that $\|x'(T_{k\beta})\|=\|G(T_{k\beta}, \pi_{k\beta} (y^{(k\beta)}+\hat{\phi})_{T_{k\beta}})\|=k\beta\to \infty$ as $k\to\infty$. This leads to $\|x\|_{\textnormal{Lip}}=\infty$.
}\end{proof}
{
We apply our findings to a state-dependent delay differential equation with multiple delays.
\begin{thm}\label{thm:muldel} Let $\phi\in H^1(-h,0;\R^n)$ with bounded derivative, $r_1,\ldots, r_m : H^1(-h,0;\R^n)\to [-h,0]$ Lipschitz continuous. Let $g\colon [0,T]\times (\R^n)^m\to \R^n$ be continuous. Assume there exists $L\geq 0$ such that for all $t\in [0,T]$, $u_1,\ldots, u_m, v_1,\ldots, v_m\in \R^n$: 
\[
   \| g(t,u_1,\ldots, u_m)-g(t,v_1,\ldots, v_m)\|_{\R^n}\leq L\sum_{j=1}^m \|u_j-v_j\|_{\R^n}.
\]
Then there exists a unique $x\in H^1(-h,T;\R^n)$ such that $x_0 =\phi$ and
\[
   x'(t) = g(t, x(t+r_1(x_t)), \ldots, x(t+r_m(x_t)))\quad (t\in (0,T)).
\]
\end{thm}
\begin{proof} We apply Theorem \ref{thm:wpfde} with 
\[
    G \colon [0,T]\times H^1(-h,0;\R^n)\to \R^n, (t,\phi) \mapsto g(t, \phi(r_1(\phi)), \ldots, \phi(r_m(\phi))).
\]
Then $G$ is continuous as composition of continuous mappings. Moreover, let $\beta>0$. Then, if $\phi,\psi \in V_\beta$, by Theorem \ref{thm:sddcm} applied to $g=\id$ on $\R^n$ and $r=r_j$ for some $j\in \{1,\ldots, m\}$, we get
\[
  \|\phi(r_j(\phi))-\psi(r_j(\psi))\|_{\R^n}\leq (2h^{1/2}+\beta \|r_j\|_{\textnormal{Lip}} + h^{-1/2})\|\phi-\psi\|_{H^1(-h,0;\R^n)}.
\]
Hence, for all $\phi,\psi\in V_\beta$ we get for all $t\in [0,T]$
\begin{align*}
 \|G(t,\phi)-G(t,\psi)\|_{\R^n} &\leq L\sum_{j=1}^m   \|\phi(r_j(\phi))-\psi(r_j(\psi))\|_{\R^n} \\ &\leq \big(L\sum_{j=1}^m (2h^{1/2}+\beta \|r_j\|_{\textnormal{Lip}} + h^{-1/2})\big)\|\phi-\psi\|_{H^1(-h,0;\R^n)}.
\end{align*}
Thus, $G$ is almost uniformly Lipschitz continuous. Note that for $x\in H^1(-h,T)$ and $t\in [0,T]$, we have
\[
   G(t,x_t) = g(t, x_t(r_1(x_t)), \ldots, x_t(r_m(x_t))) = g(t, x(t+r_1(x_t)), \ldots, x(t+r_m(x_t))).
\] By Theorem \ref{thm:wpfde} we find $T_0\leq T$ chosen maximally and a unique $x$ satisfying (FDE) for the particular $G$ considered here. In order to prove the final claim, $x\in H^1(-h,T;\R^n)$, we use the alternatives for $x$ defined on $(-h,T_0)$ with  $T_0$ maximal  in Theorem \ref{thm:wpfde}.  Thus, it suffices to show that $\|x\|_{\textnormal{Lip}}<\infty$. For this, we assume by contradiction, $\|x\|_{\textnormal{Lip}}=\infty$. Using the (FDE), we get 
\[
\lim_{t\to T_0-} \|g(t, x_t(r_1(x_t)), \ldots, x_t(r_m(x_t)))\|=\lim_{t\to T_0-} \|G(t,x_t)\|=\infty.
\] Since $g$ is continuous and thus bounded on bounded sets we deduce that 
\begin{equation}\label{eq:kainf}
\kappa(s)\coloneqq \max_{\tau\in [-h,s]}\|x(\tau)\|\to\infty
\end{equation} as $s\to T_0-$. Integrating the (FDE) on $[0,t]$ for some $t\in (0,T_0)$, we compute
\begin{align*}
 \|x(t)\| & = \|\phi(0) + \int_0^t G(s,x_s)\d s \| \\
           & \leq  \|\phi(0)\| + \int_0^t \|G(s,x_s)\|\d s =\|\phi(0)\| + \int_0^t \|g(s, x_s(r_1(x_s)), \ldots, x_s(r_m(x_s)))\|\d s  \\
           & \leq  \|\phi(0)\| + \int_0^t \|g(s,0, \ldots, 0)\|\d s  + L\sum_{j=1}^m \int_0^t \| x_s(r_j(x_s))\|\d s   \\
           &\leq c + L m \int_0^t \kappa(s)\d s,           
\end{align*} 
where $c\coloneqq \|\phi\|_\infty+ \int_0^T \|g(s,0, \ldots, 0)\|\d s<\infty$. It follows that
\[
   \kappa(t)\leq c + L m \int_0^t \kappa(s)\d s
\]
and Gronwall's inequality implies
\[
   \kappa(t)\leq c \e^{Lm t}.
\]
Hence, $\lim_{t\to T_0-}\kappa(t) <\infty$ contradicting \eqref{eq:kainf}. Hence, $x\in H^1(-h,T;\R^n)$.
\end{proof}
}

\section{Applications}\label{sec:appl}

In this section, we consider two different state-dependent delay equations. One example is an elementary problem from Newtonian mechanics and the other one is an application from mathematical biology. In either cases, we shall not go through the model assumptions in full detail as this section is merely meant to illustrate the versatility of the solution theory developed in the previous section. More so, we rather focus on the delay functional $r$ in each application. We briefly sketch how the model assumptions can be relaxed to obtain a more general solution theory for the individual equation at hand.

\subsection{On a problem from mathematical biology}

In this section, we revisit the example from cell population biology, which has been analysed in \cite{GW14,BGR21}. Here, we will use the slightly more general theorem on the solution theory for a class of functional differential equations, see Theorem \ref{thm:wpfde}. Again, we emphasise that we merely require appropriate Lipschitz continuity; the assumptions are then similar to the ones developed in \cite{BGR21}. Note that, however, the main abstract assumption in \cite{BGR21} establishing both uniqueness and existence is that of so-called `almost locally Lipschitz' continuity for the right-hand side of the functional differential equation involved. More precisely, let $h>0$. Then $f\colon \dom(f)\subseteq C([-h,0];\R^n)\to \R^m$ is called \textbf{almost locally Lipschitz}, if $f$ is continuous and for all $\phi_0\in \dom(f)$ and $\beta>0$ there exists $\delta>0$ and $L\geq0$ such that
\[
   \|f(u)-f(v)\|_{\R^m}\leq L\|u-v\|_{C([-h,0];\R^n)},
\] whenever $u,v\in B_{C[-h,0]}[\phi_0,\delta]$, the closed $\delta$-ball around $\phi_0$, and $\|u\|_{\textnormal{Lip}},\|v\|_{\textnormal{Lip}}\leq \beta$. Any almost locally Lipschitz continuous mapping can be extended to an almost uniformly Lipschitz continuous mapping in the sense of Theorem \ref{thm:wpfde} as long as $\dom(f)\subseteq C([-h,0];\R^n)$ is open. In fact, in this case, one can even extend it to a Lipschitz continuous mapping as the following result shows. We consider the autonomous case here only, the non-autonomous situation as treated in \cite{HV93} can be addressed similarly.

\begin{prop}\label{prop:aLc} Let $f\colon \dom(f)\subseteq C([-h,0];\R^n)\to \R^m$ almost locally Lipschitz with $\dom(f)\subseteq C([-h,0];\R^n)$ open. Then for all Lipschitz continuous $\phi_0\in \dom(f)$ there exists $F\colon H^1(-h,0;\R^n)\to \R^m$ Lipschitz continuous and $\delta>0$ such that for all $\beta>0$
\[
   F|_{ B_{C[-h,0]}[\phi_0,\delta]\cap V_\beta}=f|_{B_{C[-h,0]}[\phi_0,\delta]\cap V_\beta}
\]
\end{prop}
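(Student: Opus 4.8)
The plan is to combine the almost-locally-Lipschitz property at the single point $\phi_0$ with the Sobolev embedding (Theorem \ref{thm:set}) and the Hilbert-space extension machinery of Corollary \ref{cor:Lipf}. First I would fix the Lipschitz continuous $\phi_0 \in \dom(f)$. Since $\dom(f)$ is open in $C([-h,0];\R^n)$ and $\phi_0$ is itself Lipschitz, pick $\beta_0 > \|\phi_0\|_{\textnormal{Lip}}$; applying the almost-locally-Lipschitz definition with this $\beta_0$ yields $\delta_0 > 0$ and $L_0 \geq 0$ such that $f$ is $L_0$-Lipschitz (in the sup-norm) on $B_{C[-h,0]}[\phi_0,\delta_0] \cap V_{\beta_0}$. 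The subtlety here is that the Lipschitz constant in the definition depends on $\beta$, so one cannot get a single constant valid on $B_{C[-h,0]}[\phi_0,\delta_0]\cap V_\beta$ for \emph{all} $\beta$ — but this is exactly why the statement only asserts that the extension $F$ \emph{agrees} with $f$ on each $B_{C[-h,0]}[\phi_0,\delta]\cap V_\beta$, not that $f$ is globally Lipschitz there. So I would shrink: set $\delta \coloneqq \delta_0/2$ (or just $\delta_0$) and work with the set $C_0 \coloneqq B_{H^1}[\phi_0,\delta'] $ for a suitable $H^1$-radius $\delta'$ chosen via Theorem \ref{thm:set} so that $B_{H^1}[\phi_0,\delta'] \subseteq B_{C[-h,0]}[\phi_0,\delta_0]$; concretely $\delta' \coloneqq \delta_0 / (2h^{1/2}+h^{-1/2})$ works since $\|\psi-\phi_0\|_{C[-h,0]} \leq (h^{1/2}+h^{-1/2})\|\psi - \phi_0\|_{H^1}$.

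Next I would produce an auxiliary function that is genuinely Lipschitz on a closed convex subset of $H^1(-h,0;\R^n)$. The natural candidate is the restriction of $f$ to $D \coloneqq B_{H^1}[\phi_0,\delta'] \cap V_{\beta_0}$, which is closed and convex (intersection of a closed ball in $H^1$ with the closed convex set $V_{\beta_0}$ from Remark \ref{rem:Vbeta}), nonempty since $\phi_0 \in D$. On $D$ we have, for $\psi_1,\psi_2 \in D$, that both are in $B_{C[-h,0]}[\phi_0,\delta_0] \cap V_{\beta_0}$, so $\|f(\psi_1)-f(\psi_2)\| \leq L_0 \|\psi_1-\psi_2\|_{C[-h,0]} \leq L_0(h^{1/2}+h^{-1/2})\|\psi_1-\psi_2\|_{H^1}$; hence $f|_D$ is Lipschitz with respect to the $H^1$-metric, with constant $L_0(h^{1/2}+h^{-1/2})$. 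Now apply Corollary \ref{cor:Lipf} with $H = H^1(-h,0;\R^n)$, $C = D$, $Y = \R^m$: this yields $F \coloneqq (f|_D) \circ P_D \colon H^1(-h,0;\R^n) \to \R^m$ Lipschitz continuous with $\|F\|_{\textnormal{Lip}} = \|f|_D\|_{\textnormal{Lip}} < \infty$, and $F|_D = f|_D$.

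It remains to verify the claimed agreement $F|_{B_{C[-h,0]}[\phi_0,\delta] \cap V_\beta} = f|_{B_{C[-h,0]}[\phi_0,\delta]\cap V_\beta}$ for every $\beta > 0$. The point is that although $V_\beta$ for $\beta > \beta_0$ is strictly larger than $V_{\beta_0}$, the relevant intersection with the \emph{small} ball already forces the derivative bound: if $\psi \in B_{C[-h,0]}[\phi_0,\delta] \cap V_\beta$, I need $\psi \in D$, i.e. $\psi \in B_{H^1}[\phi_0,\delta']$ and $\psi \in V_{\beta_0}$. The second is the issue — a function in $V_\beta$ need not lie in $V_{\beta_0}$. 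The honest fix is to choose $\delta$ small enough (in the $H^1$ or sup sense) that \emph{proximity to $\phi_0$ together with $\psi \in C([-h,0];\R^n)$ being a candidate solution} already pins the derivative; but since $V_\beta$-membership is about $\|\psi'\|_\infty$, mere $C$-closeness to $\phi_0$ does not control $\psi'$. The clean resolution, and the one I expect the authors intend, is to reread the statement: the conclusion is required only on $B_{C[-h,0]}[\phi_0,\delta]\cap V_\beta$, and what is actually used downstream (in the local solution theory via the permanence principle, Theorem \ref{thm:pp}) is that along a solution the pre-history stays $C$-close to $\phi_0$ \emph{and} has derivative bounded by some fixed $\beta$; so one may as well build $F$ using $D = B_{H^1}[\phi_0,\delta']\cap V_{\beta_1}$ for a fixed large $\beta_1$ and then note that for $\beta \leq \beta_1$ the agreement is immediate, while for $\beta > \beta_1$ one simply does not claim agreement on all of $V_\beta$ but rather restricts attention, in the application, to the sublevel set actually traversed. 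Therefore the main obstacle — and the step deserving the most care — is the bookkeeping of which $\beta$'s the extension $F$ is faithful on; I would resolve it by taking $D \coloneqq B_{H^1}[\phi_0,\delta'] \cap V_{\beta_0}$ with $\beta_0$ fixed at the outset, $F \coloneqq f|_D \circ P_D$, $\delta \coloneqq \delta'/(h^{1/2}+h^{-1/2})$ (so that $B_{C[-h,0]}[\phi_0,\delta] \subseteq B_{H^1}[\phi_0,\delta']$ fails in general — rather use $\delta$ directly as an $H^1$-radius threshold), and checking that for each $\beta$, $B_{C[-h,0]}[\phi_0,\delta]\cap V_\beta \cap B_{H^1}[\phi_0,\delta'] \subseteq D$ once $\beta \leq \beta_0$, which is all the theorem's downstream use requires; the general $\beta$ case then follows because $V_\beta \cap V_{\beta_0} = V_{\min\{\beta,\beta_0\}}$ and the reader only ever applies the proposition with $\beta$ at most the fixed Lipschitz bound of the pre-history. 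I would phrase the final writeup so that $\beta_0$ is chosen once (depending on $\phi_0$) and $F$, $\delta$ depend only on $\phi_0$, exactly as the statement demands.
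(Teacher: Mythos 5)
Your overall strategy is the one the paper uses: invoke the almost-locally-Lipschitz property at $\phi_0$ for one fixed $\beta_0>\|\phi_0\|_{\textnormal{Lip}}$, convert the resulting sup-norm Lipschitz bound into an $H^1$-Lipschitz bound via Theorem \ref{thm:set}, observe that the relevant set is closed and convex in $H^1(-h,0;\R^n)$, and extend by $f|_D\circ P_D$ using Corollary \ref{cor:Lipf}. However, there is a concrete defect in your choice of the convex set $D$. You take $D=B_{H^1}[\phi_0,\delta']\cap V_{\beta_0}$, an $H^1$-ball, and your extension $F$ is then only guaranteed to agree with $f$ on that set. But the proposition demands agreement on $B_{C[-h,0]}[\phi_0,\delta]\cap V_{\beta_0}$, and a function $\psi\in V_{\beta_0}$ with $\|\psi-\phi_0\|_{C[-h,0]}\leq\delta$ need \emph{not} be $H^1$-close to $\phi_0$: one only gets $\|\psi-\phi_0\|_{H^1}^2\leq h\delta^2+h(\beta_0+\|\phi_0\|_{\textnormal{Lip}})^2$, and the derivative term does not shrink with $\delta$. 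So $B_{C[-h,0]}[\phi_0,\delta]\cap V_{\beta_0}\not\subseteq B_{H^1}[\phi_0,\delta']$ no matter how you tune $\delta$ and $\delta'$, and your $F$ fails to agree with $f$ on the set the statement names. You half-notice this and propose to ``use $\delta$ directly as an $H^1$-radius threshold,'' but that changes the statement rather than proving it. The fix is simply to drop the $H^1$-ball and take $D\coloneqq B_{C[-h,0]}[\phi_0,\delta]\cap V_{\beta_0}$ itself: this set is convex, non-empty, and closed in $H^1(-h,0;\R^n)$ (closedness of the $C$-ball under $H^1$-convergence follows from Theorem \ref{thm:set}, and closedness of $V_{\beta_0}$ from Remark \ref{rem:Vbeta}). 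This is exactly how the paper proceeds, and with that choice the agreement on the stated set is immediate.

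Your lengthy worry about the quantifier ``for all $\beta>0$'' is legitimate but is not a gap you need to close relative to the paper: the paper's own proof also fixes $\beta$ at the outset (so $\delta$ and $F$ depend on the chosen $\beta$), and agreement for smaller $\beta'$ follows from $V_{\beta'}\subseteq V_\beta$ while agreement for larger $\beta'$ is not established. Reading the conclusion with the quantifiers as ``for all $\beta>0$ there exist $F$ and $\delta$'' is the interpretation consistent with both proofs and with the downstream use via the permanence principle. Modulo the $D$-issue above, your argument would then be complete and essentially identical to the paper's.
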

\begin{proof}
At first, we show for $\beta>\|\phi_0\|_{\textnormal{Lip}}$, $D\coloneqq V_\beta\cap B_{C[-h,0]}[\phi_0,\delta]\subseteq H^1(-h,0;\R^n)$ is convex and closed and non-empty. Since $V_\beta\subseteq H^1(-h,0;\R^n)$, we get  $D\subseteq H^1(-h,0;\R^n)$. Convexity of $D$ being evident as an intersection of convex sets, it remains to show closedness. For this, let $(\psi_k)_{k\in \N}$ in $D$ converging in $H^1(-h,0;\R^n)$ to some $\psi\in H^1(-h,0;\R^n)$. By Theorem \ref{thm:set}, it follows that $\psi\in B_{C[-h,0]}[\phi_0,\delta]$. Remark \ref{rem:Vbeta} confirms that $\psi\in V_\beta$.

For the construction of the extension, let $\phi_0\in \dom(f)$ be Lipschitz continuous. As $\dom(f)$ is open, we find $\delta_0>0$ such that $B_{C([-h,0];\R^n)}[\phi_0,\delta_0]\subseteq \dom(f)$; let $\beta >\|\phi\|_{\textnormal{Lip}}$. As $f$ is almost locally Lipschitz, we find $0<\delta<\delta_0$ such that $f|_{D} \colon D\subseteq C([-h,0];\R^n)\to \R^m$ is Lipschitz continuous, where $D\coloneqq V_\beta\cap B_{C([-h,0];\R^n)}[\phi_0,\delta]$. Thus, using Theorem \ref{thm:set}, we estimate for all $\phi,\psi\in D$
\[
   \|f|_{D}(\phi)-f|_{D}(\psi)\|\leq \|f|_{D}\|_{\textnormal{Lip}}\|\phi-\psi\|_{C([-h,0];\R^n)]}\leq (h^{1/2}+h^{-1/2})\|f|_{D}\|_{\textnormal{Lip}}\|\phi-\psi\|_{H^1(-h,0;\R^n)}.
\]
Thus, $f|_D$ is Lipschitz continuous as mapping {from} $H^1(-h,0;\R^n)$. Since, by our preliminary observation, $D\subseteq H^1(-h,0;\R^n)$ is closed and convex, $f|_{D}$ admits a Lipschitz continuous extension, $F$, to the whole of $H^1(-h,0;\R^n)$, by Corollary  \ref{cor:Lipf}.
\end{proof}

The model in \cite{GW14} reads as follows
\[
   \begin{pmatrix}
       w(t) \\ v(t)
   \end{pmatrix}' = \begin{pmatrix} q(v(t))w(t) \\ -\mu v(t) + \frac{\gamma(v(t-r(v_{(t)})))g(x_2,v(t))w(t-r(v_{(t)}))}{g(x_1,v(t-r(v_t)))}\e^{\int_0^{r(v_{(t)})} d(y(s,v_{(t)}),v(t-s))\d s}\end{pmatrix},
\]
where $g,q,\gamma,d$ are suitable functions and $x_1<x_2$, $\mu$ are given real parameters. Assuming merely Lipschitz continuity and suitable boundedness requirements, Theorem \ref{thm:wpfde} {can be applied, which will be addressed in a future publication.} Here, we shall discuss the mapping $r$ in some more detail. {We will show that the assumptions given in \cite{GW14,BGR21} imply that $r$ is a Lipschitz continuous mapping on the whole of $H^1(-h,0)=H^1(-h,0;\R)$.}

{Next, following \cite{GW14,BGR21}, we will introduce the particulars necessary to define $r$:} We assume that $g\colon \R^2\to \R$ is Lipschitz continuous satisfying
\[
   \varepsilon \leq g(y,p)\leq K\quad(y,p\in\R)
\]
for some $0<\varepsilon<K$. Then consider for $\psi\in H^1(-h,0)$ the differential equation
\[
   y'(s)=-g(y(s),\psi(-s))\;(s>0),\quad y(0)=x_2.
\]
The time $r(\psi)\in [-h,0]$ is then given by $y(-r(\psi))=x_1$. Recall that the classical Picard--Lindel\"of theorem yields unique existence of $y$. Also, the assumptions on $g$ yield existence and uniqueness of  $r(\psi)\in [-h,0]$ if $h$ is large enough, see again, e.g.,~\cite{GW14}.

The rest of this section is devoted to establishing the following result, which confirms even global Lipschitz continuity of the delay functional $r$.
\begin{thm}\label{thm:lcon}
The mapping
\[
   r\colon H^1(-h,0)\to [-h,0]
\]
is Lipschitz continuous.
\end{thm}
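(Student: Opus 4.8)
\textbf{Proof strategy for Theorem \ref{thm:lcon}.} The plan is to show that the delay functional $r$ is the composition of several Lipschitz continuous maps, and then to extract the Lipschitz constant explicitly from the bounds on $g$. First I would fix $\psi_1,\psi_2\in H^1(-h,0)$, denote by $y_1,y_2$ the corresponding solutions of the backward ODE $y_j'(s)=-g(y_j(s),\psi_j(-s))$ with $y_j(0)=x_2$, and estimate $\sup_{s\in[-h,0]}|y_1(s)-y_2(s)|$ by a Gr\"onwall argument. Indeed, writing $\delta(s)\coloneqq y_1(s)-y_2(s)$, we have for $s\in[-h,0]$
\[
  |\delta(s)| \le \int_s^0 \big(\|g\|_{\textnormal{Lip}}|\delta(\sigma)| + \|g\|_{\textnormal{Lip}}|\psi_1(-\sigma)-\psi_2(-\sigma)|\big)\d\sigma,
\]
so Gr\"onwall's inequality yields $\|y_1-y_2\|_{C[-h,0]}\le C_1\|\psi_1-\psi_2\|_{C[-h,0]}$ with $C_1$ depending only on $\|g\|_{\textnormal{Lip}}$ and $h$; composing with Theorem \ref{thm:set} then gives a bound in terms of $\|\psi_1-\psi_2\|_{H^1}$.

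The second step is to pass from the solution map $\psi\mapsto y$ to the hitting time $r(\psi)$, which is characterised by $y(r(\psi))=x_1$. Here I would use that $y$ is \emph{strictly} decreasing with $y'(s)=-g(y(s),\psi(-s))\le -\varepsilon<0$, so that $y$ is bi-Lipschitz as a function of $s$ with inverse Lipschitz constant $1/\varepsilon$. Consequently, evaluating the defining identity at the two hitting times $t_j\coloneqq r(\psi_j)$,
\[
  0 = y_1(t_1)-y_2(t_2) = \big(y_1(t_1)-y_1(t_2)\big) + \big(y_1(t_2)-y_2(t_2)\big),
\]
and the first bracket is bounded below in absolute value by $\varepsilon|t_1-t_2|$ while the second is bounded by $\|y_1-y_2\|_{C[-h,0]}$; hence $|r(\psi_1)-r(\psi_2)|\le \varepsilon^{-1}\|y_1-y_2\|_{C[-h,0]}$. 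Chaining this with the first step and Theorem \ref{thm:set} produces a global Lipschitz estimate $|r(\psi_1)-r(\psi_2)|\le L\|\psi_1-\psi_2\|_{H^1}$ on all of $H^1(-h,0)$.

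The main obstacle I anticipate is a bookkeeping issue rather than a conceptual one: one has to make sure $r(\psi)$ is genuinely well-defined and lies in $(-h,0)$ for \emph{every} $\psi\in H^1(-h,0)$ (not merely near a reference profile), which is exactly where the uniform bounds $\varepsilon\le g\le K$ and the hypothesis that $h$ is large enough enter — they guarantee $y$ decreases from $x_2$ past $x_1$ within the window $[-h,0]$. A minor subtlety is that the argument of $g$ in the ODE is $\psi(-s)$, i.e. $\psi$ evaluated at a reflected time, but reflection is an isometry on $C[-h,0]$ so it costs nothing. Once well-definedness is in hand, the two Gr\"onwall/bi-Lipschitz steps above are routine, and the final constant is $L = \varepsilon^{-1}\e^{\|g\|_{\textnormal{Lip}} h}\|g\|_{\textnormal{Lip}}(h^{1/2}+h^{-1/2})$ or similar.
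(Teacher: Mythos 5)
Your proposal is correct and follows essentially the same route as the paper's proof of Theorem \ref{thm:lcon}: the lower bound $\varepsilon\leq g$ yields the hitting-time estimate $\varepsilon|r(\phi)-r(\psi)|\leq\|y_\phi-y_\psi\|_\infty$ (the paper writes your two-bracket decomposition as a difference of integrals of $g$), a Gr\"onwall argument controls $\|y_\phi-y_\psi\|_\infty$ by the distance of the pre-histories, and the Sobolev embedding closes the loop in $H^1$. The only cosmetic difference is that the paper estimates via Cauchy--Schwarz directly against $\|\phi-\psi\|_{L_2}$, which it notes is slightly stronger than the $H^1$-bound required.
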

\begin{proof}
  Let $\phi,\psi\in H^1(-h,0)$. We denote the corresponding solutions of the above ODEs determining $r(\phi)$ and $r(\psi)$ by $y_\phi$ and $y_\psi$, respectively. Then by integrating the ODEs in question, we obtain
  \[
     x_1 = x_2 -\int_0^{-r(\psi)} g(y_\psi(t),\psi(-t))\d t =x_2 -\int_0^{-r(\phi)} g(y_\phi(t),\phi(-t))\d t.
  \]
  Hence, from $r(\psi)<r(\phi)$ (without restriction) we deduce from the assumption on $g$,
  \begin{align*}
    \varepsilon |r(\phi)-r(\psi)| & \leq \int_{-r(\phi)}^{-r(\psi)} g(y_\phi(t),\phi(-t))\d t \\
    & = \int_0^{-r(\psi)} (g(y_\psi(t),\psi(-t))-g(y_\phi(t),\phi(-t)))\d t \\
    & \leq \int_0^h |(g(y_\psi(t),\psi(-t))-g(y_\phi(t),\phi(-t)))|\d t \\
        & \leq \int_0^h L|y_\psi(t)-y_\phi(t)|\d t+
        \int_0^h L|\psi(-t)-\phi(-t)|\d t,
  \end{align*}
  where $L$ is the Lipschitz constant of $g$. Using the differential equations for $y_\phi$ and $y_\psi$ and integrating, we get
  \[
    | y_\psi(t)-y_\phi(t)|\leq \int_0^t L|y_\psi(s)-y_\phi(s)|\d s+\int_0^t L|\psi(-s)-\phi(-s)|\d s.
  \]
  Gronwall's inequality thus implies
  \[
    | y_\psi(t)-y_\phi(t)|\leq \int_0^t L|\psi(-s)-\phi(-s)|\d s \e^{Lt}\leq L\sqrt{t}\e^{Lt}\|\phi-\psi\|_{L_2}
  \]
  Thus, combining the estimates, we get, for some number $C\geq 0$ depending on $h$ and $L$ only,
  \[
     \varepsilon |r(\phi)-r(\psi)|\leq C\|\phi-\psi\|_{L_2}\leq C\|\phi-\psi\|_{H^1}
  \]
   as required.
\end{proof}
\begin{rem}
Note that the estimate derived in the proof of Theorem \ref{thm:lcon} is better than anticipated in the claim of the theorem. {In fact, $r$ is merely Lipschitz continuous under the $L_2$-norm rather than the $H^1$-norm. However, in order to guarantee existence of $r(\phi)$ continuity of $\phi$ is required. Thus, the domain of $r$ is $H^1$, which is embedded into the space of continuous functions by Theorem \ref{thm:set}.}\end{rem}

\subsection{A positioning problem}\label{sec:applpos}

The second illustration of our results is taken from \cite{Walther} and describes a device positioned at the point $x(t)$ on the real axis determining its position by emitting signals to a given reference point $-w<0$, where the signal with constant speed $c$ is reflected. After some time, $s(t)$, depending on the position of the device, the signal is received again by the device. The time $s(t)$ yields a position $\hat{x}$ according to 
\[
   \hat{x}=\tfrac{c}{2} s(t)-w,
\]
this derived position being the actual position if $x(t)=0$. For the right-hand side this computed position is used for the acceleration $a(\hat{x})$. As in \cite[Section 3]{Walther}, we consider the given model only if $x$ is in between $-w$ and some a priori given $w_+>0$. For some Lipschitz continuous $a\colon \R\to \R$, the set of equations we consider here is
\[
   \begin{pmatrix}x\\v \end{pmatrix}' =    \begin{pmatrix}v\\-\mu v + a(s\tfrac{c}{2}-w)  \end{pmatrix}\text{ with }cs(t)=x(t-s(t))+x(t)+2w. 
\]for some fixed real numbers $c,\mu$. 

As in \cite{Walther}, we let $h\coloneqq (2w+2w_+)/c$ and obtain  $0<s\leq h$ assuming $x(t)<w_+$. Thus, $h$ is our final time horizon (in negative direction). Next, we {want to} apply our main theorem (Theorem \ref{thm:wp}) and the permanence principle (Theorem \ref{thm:pp}) for obtaining a solution theory for all initial values $\phi\in H^1(-h,0;\R^2)$ with bounded derivative. {In order to provide the proper setting, w}e define for $0<\alpha<\min\{c,w,w_+\}$
\[
    W_\alpha \coloneqq \{ \phi=(\xi,\eta)\in H^1(-h,0;\R^2); -w+\alpha\leq \xi\leq w_+-\alpha, |\xi'(s)|\leq c-\alpha\text{ for a.e.~}s\in (-h,0)\}.
\]
It is not difficult to see that $W_\alpha$ is a closed and convex subset of $H^1(-h,0;\R^2)$.

\begin{thm}\label{thm:dfw} Let $\phi=(\xi,\eta)\in W_\alpha$. Then there exists a unique $s_\phi\in [0,h]$ such that
\[
   cs_\phi = \xi(-s_\phi)+\xi(0)+2w.
\]
Moreover, 
\[
   W_\alpha \ni \phi\mapsto s_\phi\in [0,h]
\]
is Lipschitz continuous.  
\end{thm}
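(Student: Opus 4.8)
The plan is to follow the same strategy that underlies Theorem \ref{thm:sddcm} and the biology example (Theorem \ref{thm:lcon}): first establish unique existence of $s_\phi$ by a fixed-point/monotonicity argument exploiting the bounds built into $W_\alpha$, and then derive the Lipschitz estimate by comparing the defining relations for two elements $\phi,\psi\in W_\alpha$ and using the Sobolev embedding (Theorem \ref{thm:set}) to control pointwise differences by the $H^1$-norm. For existence and uniqueness, I would introduce $\Psi_\phi\colon [0,h]\to\R$, $\Psi_\phi(s)\coloneqq \tfrac1c(\phi(-s)+\phi(0)+2w)$, and observe that a solution of $cs_\phi=\phi(-s_\phi)+\phi(0)+2w$ is exactly a fixed point of $\Psi_\phi$. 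The derivative-bound $\|\phi'\|_\infty\le c-\alpha$ gives $|\Psi_\phi(s_1)-\Psi_\phi(s_2)|\le \tfrac{c-\alpha}{c}|s_1-s_2|$, so $\Psi_\phi$ is a strict contraction on $[0,h]$; the range bounds $-w+\alpha\le\phi\le w_+-\alpha$ and the choice $h=(2w+2w_+)/c$ ensure $\Psi_\phi$ maps $[0,h]$ into itself (indeed $\Psi_\phi(s)\ge \tfrac1c(-w+\alpha + (-w+\alpha) + 2w)>0$ and $\Psi_\phi(s)\le \tfrac1c(2w_+-2\alpha+2w)<h$). Banach's fixed point theorem then yields the unique $s_\phi$.

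For the Lipschitz property, take $\phi,\psi\in W_\alpha$ with, say, $s_\psi\le s_\phi$ (without restriction). Subtracting the two defining equations,
\begin{align*}
 c(s_\phi-s_\psi) &= \phi(-s_\phi)-\psi(-s_\psi)+\phi(0)-\psi(0)\\
 &= \big(\phi(-s_\phi)-\phi(-s_\psi)\big)+\big(\phi(-s_\psi)-\psi(-s_\psi)\big)+\big(\phi(0)-\psi(0)\big).
\end{align*}
The first bracket is bounded by $\|\phi'\|_\infty|s_\phi-s_\psi|\le (c-\alpha)|s_\phi-s_\psi|$ (writing it as $\int$ of $\phi'$), while the remaining two brackets are each bounded by $(h^{1/2}+h^{-1/2})\|\phi-\psi\|_{H^1}$ by Theorem \ref{thm:set}. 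Absorbing the $(c-\alpha)|s_\phi-s_\psi|$ term on the left gives
\[
 \alpha\,|s_\phi-s_\psi|\le 2(h^{1/2}+h^{-1/2})\|\phi-\psi\|_{H^1},
\]
i.e.\ $\phi\mapsto s_\phi$ is Lipschitz with constant $\tfrac{2}{\alpha}(h^{1/2}+h^{-1/2})$. The closedness and convexity of $W_\alpha$ noted just before the theorem then allows one, via Corollary \ref{cor:Lipf}, to extend $\phi\mapsto s_\phi$ (and hence the delay functional $r$ built from it) to a globally Lipschitz map on $H^1(-h,0;\R^2)$, so that (H2) holds and Theorem \ref{thm:wp} applies.

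The only genuinely delicate point is the bookkeeping that makes $\Psi_\phi$ both a self-map of $[0,h]$ and a contraction simultaneously: the contraction constant comes from the derivative bound $c-\alpha<c$, and the self-map property needs the precise value $h=(2w+2w_+)/c$ together with the two-sided range bounds — this is exactly where the geometry of the positioning model enters, and where one must be careful that $\alpha$ being strictly positive is used (for $\alpha=0$ the map would only be non-expansive and $s_\phi$ need not be unique or Lipschitz-dependent). Everything after that is routine estimation with the Sobolev embedding, exactly as in the proof of Theorem \ref{thm:sddcm}.
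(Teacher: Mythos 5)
Your proof is correct and follows essentially the same route as the paper: a Banach fixed-point argument for $\Psi_\phi(s)=\tfrac1c(\phi(-s)+\phi(0)+2w)$ using the derivative bound $\|\phi'\|_\infty\le c-\alpha$ for the contraction, followed by subtracting the two defining identities and absorbing the $(c-\alpha)|s_\phi-s_\psi|$ term to get the constant $\tfrac{2}{\alpha}(h^{1/2}+h^{-1/2})$ via Theorem \ref{thm:set}. Your explicit verification that $\Psi_\phi$ maps $[0,h]$ into itself (using the range bounds and $h=(2w+2w_+)/c$) is a detail the paper leaves implicit, and is a welcome addition.
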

\begin{proof}
 Let $s,t\in [-h,0]$, $s<t$. Then by the fundamental theorem of calculus for $H^1$-functions, we obtain for $\phi=(\xi,\eta)\in W_\alpha$
 \[
    |\xi(s)-\xi(t)|\leq \int_{s}^t |\xi'(\tau)|\d \tau \leq (c-\alpha)|t-s|.
 \]
 Thus, $[0,h]\ni s\mapsto \tfrac{1}{c}(\xi(-s)+\xi(0)+2w)$ is a self-mapping strict contraction and thus admits a unique fixed point. 
 Similarly to the proof of the same property in \cite{Walther}, we estimate for $\phi=(\xi,\eta),\psi=(\zeta,\chi)\in W_\alpha$
 \begin{align*}
   c|s_\phi-s_\psi|& =|\xi(-s_\phi)-\zeta(-s_\psi)+\xi(0)-\zeta(0)| \\
   & \leq (c-\alpha)|s_\phi-s_\psi|+2 \sup_{s\in [-h,0]}|\phi(s)-\psi(s)|.
 \end{align*}
 Hence, by Theorem \ref{thm:set}, 
 \[
    |s_\phi-s_\psi|\leq \frac{2}{\alpha}(h^{1/2}+h^{-1/2})\|\phi-\psi\|_{H^1(-h,0;\R^2)}.\qedhere
 \]
\end{proof}
Next, with the projection $\pi_\alpha\colon H^1(-h,0;\R^2)\to H^1(-h,0;\R^2)$ on $W_\alpha$ and 
\[r_\alpha\colon H^1(-h,0;\R^2)\to [-h,0]\] given by $r_\alpha(\phi) = s_{\pi_\alpha(\phi)}$, the assumptions of Theorem \ref{thm:wp} are satisfied, for the right-hand side
\begin{multline*}
G(t,(x,v)_t)\coloneqq g(t, (x(t),v(t)), (x(t+r_\alpha((x,v)_{t})),v(t+r_\alpha((x,v)_{t})))) \\ = \begin{pmatrix}v(t)\\-\mu v(t) + a( \frac{x(t+r_\alpha((x,v)_{t}))+x(t)}{2}) \end{pmatrix}.
\end{multline*}
{Indeed, (H1) is evident by the Lipschitz continuity of $a$; note that in \cite{Walther} following up on a solution theory in $C^1$, $a$ needed to be continuously differentiable. For the satisfaction of (H2), we need to find a Lipschitz estimate for the delay functional in terms of $x$ on $V_\beta$. This is provided in Theorem \ref{thm:dfw}, which is a Sobolev regular variant of the similar result \cite[Proposition 8]{Walther}. We emphasise that we do not need a differentiability statement there.}
{By Theorem \ref{thm:dfw} for a Lipschitz continuous $\phi \in H^1(-h,0;\R^2)$ we obtain unique existence of $(x,v)$ such that
\[
   \begin{pmatrix} x\\ v\end{pmatrix}'=G(\cdot,(x,v)_{(\cdot)}) \text{ subject to } (x,v)_0 =\phi
\]
If, in addition, $\phi\in W_{\alpha'}$ for some $0<\alpha'<\alpha$, then by the permanence principle Theorem \ref{thm:pp}, we deduce the existence of some $T>0$ such that $(x,v)_t \in W_\alpha$ for all $0\leq t\leq T$ and, thus, $(x,v)$ satisfies on $(0,T)$
\[
  \begin{pmatrix} x\\ v\end{pmatrix}'=\begin{pmatrix}v(\cdot)\\-\mu v(\cdot) + a\big( \frac{x(\cdot+s_{(x,v)_{(\cdot)}})+x(\cdot)}{2}\big) \end{pmatrix}.
\] }
\section{Further comments on other approaches concerning functional differential equations}\label{sec:FDECW}
{
In this section, we fix $h,T>0$ and consider a functional differential equation {in a simplified form. For a scalar state $x$ consider}
\begin{equation}\label{eq:fdef}
   x'(t) = f(x_t), t\in [0,T],
\end{equation}
subject to some initial pre-history $\phi\colon [-h,0]\to \R$, where $f\colon U\to \R$ with $f$ defined on a subset $U$ of a function space of functions from $[-h,0]$ into $\R$.}

(a) Let us point out the subtle difference of the functional differential equation in \cite{BGR21,MNP94} and the one in Theorem \ref{thm:fdewpint}. In \cite{MNP94}, the existence of solutions was shown for continuous $G\colon C[-h,0]\to\R$; uniqueness requires almost local Lipschitz continuity for $G$. The delay functional, $r$, considered in Theorem \ref{thm:dfw} is defined on Lipschitz continuous functions satisfying a certain bound for the functions itself and for the Lipschitz norm. Recall $V_\alpha\coloneqq \{x\in H^1(-h,0); |x'(t)|\leq \alpha\text{ for a.e.~}t\in (-h,0)\}$. In order to apply the retraction approach in {the setting \eqref{eq:fdef}, where the continuity properties of the delay functional $r$ are inherited by a suitable $f$}, one is required to analyse, whether there exists a continuous extension of $f\colon V_\alpha\subseteq C[-h,0]\to \R$ to the whole of $C[-h,0]$ given $f$ is continuous. Since $V_\alpha$ is a closed and convex subset of $C[-h,0]$ it is possible to apply \cite[Proposition 1.19(i)]{BL00} to obtain the \textbf{retraction} $p\colon C[-h,0]\to V_\alpha$ (i.e., a continuous map such that $\rho|_{V_\alpha}=\id_{V_\alpha}$). Thus, $F\coloneqq f\circ p$ extends $f$ continuously to the whole of $C[-h,0]$. Whether this extension retains to be almost locally Lipschitz if $f$ has the said property is a priori unclear. Note that for this, by \cite[Lemma 2.7]{BGR21}, it suffices to show that $p$ can be chosen to be (locally) Lipschitz. The quest is to show whether or not $p$ can be chosen to be Lipschitz continuous. 

In the following we describe possible arguments that are relevant to this problem. 

(b) An argument in \cite{MNP94} shows that $C([-h,0];[A,B])$ for some $-\infty<A<B<\infty$ is a Lipschitz \textbf{retract}, that is, there exists a Lipschitz continuous retraction from $C[-h,0]$ onto $C([-h,0];[A,B])$. Next, McShane's Lemma shows $W_{1,0}\coloneqq \{f\in C[-h,0]; \|f\|_{\text{Lip}}\leq 1, f(0)=0\}$ is a Lipschitz retraction of $C[-h,0]$, see \cite{M34}. Indeed, for this, consider the mapping
\[
  E \colon C[-h,0]\to C[-h,0]\cap V_1, f\mapsto\big(s\mapsto \min_{t\in [-h,0]} (f(t)-|s-t|)\big),
\]
which is a Lipschitz retraction on the space of Lipschitz continuous functions with Lipschitz semi-norm bounded by $1$. The mapping $H\colon C[-h,0]\cap V_1\to W_{1,0}, f\mapsto f-f(0)$ is, in turn, Lipschitz continuous. Then $E\circ H$ is the desired retraction.

(c) Whether or not $W_\alpha$ from above is a Lipschitz retract of $C([-h,0];\R^n)$ does not follow from (b) and needs to be addressed separately. More generally,  let $W\subseteq C[-h,0]$ be closed and convex. Note that by the Sobolev embedding theorem, Theorem \ref{thm:set}, any closed and convex subset  $W\subseteq C[-h,0]$ yields a closed and convex subset $W\cap H^1(-h,0)\subseteq H^1(-h,0)$. Hence, $W\cap H^1(-h,0)$ is a Lipschitz retract by Proposition \ref{prop:normproj} and any Lipschitz continuous mapping on $G\colon W\cap H^1(-h,0)\subseteq C[-h,0]\to \R$ can be extended to a Lipschitz continuous mapping on the whole of $H^1(-h,0).$

(d) By \cite[Theorem 1.6]{BL00}, $C[-h,0]$ is an absolute Lipschitz retract, that is, for every metric space $M$ containing $C[-h,0]$, there exists a Lipschitz continuous retraction from $M$ onto $C[-h,0]$. $W$ from (c) is a Lipschitz retract of $C[-h,0]$ if and only if $W$ is a so-called absolute Lipschitz retract, see \cite[Proposition 1.2 or subsequent remark (ii)]{BL00}. In fact, this puts one into the situation of \cite{HR23}, where it was shown that if $W$ is additionally a compact subset of $C[-h,0]$ then such sets are so-called absolute uniform retracts, still a weaker property than being an absolute Lipschitz retract. Whether or not (a closed, compact) $W$ is an absolute Lipschitz retract apparently needs to be addressed  for each set  {separately} as there does not seem to be a general answer, see in particular the counterexample in \cite[Section 5]{HR23}.

\section{Conclusion}\label{sec:con}

We have developed a solution theory for state-dependent delay equations (and in passing) for a certain class of functional differential equations that require Lipschitz continuity assumptions only. The assumptions are easily verified in practice and do not involve potentially complicated computations and analysis of certain derivatives. The underlying Hilbert space structure permits localisation techniques using the projection theorem so that many equations can be dealt with on the whole domain. The concept of a solution manifold is not needed for existence and uniqueness of solutions to differential equations with state-dependent delay. Being an element of the solution manifold for the initial value is a mere requirement for {continuously differentiable} solutions. Solutions starting off of the solution manifold will eventually end up on the solution manifold if the solution exists for a long enough time. Other approaches not using the solution manifold are based on compactness results, Schauder's fixed point theorem and a retraction technique. {Compactness results or Schauder's fixed point theorem are not required in the present approach. Moreover, by Kirszbraun's Theorem, see \cite{K34}, a retraction approach seems to be more widely applicable in the Hilbert space framework developed in this manuscript.}

\section*{Competing Interests Statement}

There are no competing interests to declare.

\end{document}